
\documentclass[11pt] {amsart}

\usepackage{verbatim}
\usepackage{amssymb}
\usepackage{amsbsy}
\usepackage{amscd}
\usepackage{amsmath}
\usepackage{amsthm}
\usepackage[mathscr]{eucal}
\makeatletter
\newtheorem{Theorem}{Theorem}
\newtheorem{Lojasiewicz Join Theorem}[Theorem]{\L ojasiewicz Join Theorem}
\newtheorem{Lemma}[Theorem]{Lemma}

\newtheorem{Proposition}[Theorem]{Proposition}

\newtheorem{Definition}[Theorem]{Definition}
\newtheorem{Remark}[Theorem]{Remark}

\newtheorem{Example}[Theorem]{Example}

\newtheorem{ Observation}[Theorem]{Observation}
\newtheorem{Assertion}[Theorem]{Assertion}

\newcommand{\eps}{\varepsilon}

\newcommand\la{\lambda}
\newcommand\vphi{\varphi}

\newcommand\be{\beta}

\newcommand\ga{\gamma}
\newcommand\Ga{\Gamma}
\newcommand\de{\delta}
\newcommand\De{\Delta}

\newcommand\bfu{\mbox {\bf  u}}

\newcommand\bfv{\mbox {\bf  v}}

\newcommand\bfw{\mbox {\bf  w}}

\newcommand\bfz{\mbox {\bf  z}}

\newcommand\bfa{\mbox {\bf  a}}

\newcommand\nlind{\nl \indent}
\newcommand\nl{\newline}

\newcommand{\ord}{\rm{ord}\/}

\newcommand\Cone{\rm{Cone}\/}

\newcommand\Coeff{\rm{Coeff}\/}

\newcommand\rdeg{{\rm{rdeg}\/}}

\newcommand\inv{^{-1}}









\def\inv{^{-1}}

\begin{document}
\title[\L ojasiewicz exponents f non-degenerate functions]
{\L ojasiewicz exponents of  non-degenerate holomorohic and mixed functions}

\author
[M. Oka ]
{Mutsuo Oka }
\address{Department of Mathematics,
{Tokyo  University of Science}
}
\email { oka@rs.kagu.tus.ac.jp}
\keywords {Lojasiewicz inequality,vanishing coordinate, non-convenient}
\subjclass[2000]{14P05,32S55}

\begin{abstract}
We consider  \L ojasiewicz inequalities for a non-degenerate  holomorphic function with an isolated singularity at the origin.
We give an explicit estimation of the \L ojasiewicz exponent in a slightly weaker form than the assertion in Fukui \cite{Fukui}.
For a weighted homogeneous polynomial, we give a better estimation in the form   which is conjectured by \cite{B-K-O} under under some condition (the \L ojasiewicz non-degeneracy).
 We also introduce \L ojasiewicz inequality for strongly non-degenerate mixed functions and   generalize this estimation
for  mixed functions.
\end{abstract}
\maketitle

\maketitle

\section{ Holomorphic functions
 and \L ojasiewicz exponent}
Consider an analytic function $f(\bfz)$ with an isolated singularity at the origin. 
We consider the inequality
\begin{eqnarray}\label{Loj-ineq}
\|\partial f(\bfz)\|\ge c\|\bfz\|^\theta,\quad \exists c>0,\forall \bfz\in U
\end{eqnarray}
where $U$ is a sufficiently small neighborhood of the origin and $\partial f(z)$ is the gradient vector
$(\frac{\partial f}{\partial z_1},\dots,\frac{\partial f}{\partial z_n})$.
The \L ojasiewicz exponent $\ell_0(f)$ of $f(\bfz)$ at the origin is the smallest 
positive number among  $\theta$'s which satisfy the inequality (\ref{Loj-ineq}).
It is known that there exists such number $\ell_0(f)$ and it is a rational number \cite{L-T,Te}.
We assume that the Newton boundary of $f$ is non-degenerate hereafter.
The purpose of this paper is to give an explicit upper bound of the \L ojasiewicz exponent in term of the combinatorics of the Newton boundary.
There is a similar estimation proposed by  Fukui \cite{Fukui} but he uses some incorrect equality (2.4), \cite{Fukui} in his  proof.  
Thus the proof has a gap and   the assertion must be proved in a different way, even if it is true. 
There is also an estimation by Abderrahmane \cite{Ab2} using Newton number. In this paper, we give an estimation along Fukui's way.
Our estimation is apparently a little weaker than that of Fukui but it is enough for our purpose.
In the last section of this paper (\S 4), we will generalize the notion  of \L ojasiewicz exponent for mixed functions. 
\subsection{Newton boundary and the dual Newton diagram}
Let $f(\bfz)=\sum_{\nu}c_\nu \bfz^\nu$ be an analytic function with $f({\bf 0})=0$.
Recall that the Newton diagram $\Ga_+(f)$ is the minimal convex set in the first quadrant of $\mathbb R_+^n$ containing
$\cup_{\nu,c_\nu\ne 0}(\nu+\mathbb R_+^n)$. 
The Newton boundary $\Ga(f)$ is the union of compact faces of $\Ga_+(f)$.
Let $N_+$ be the space of non-negative weight vectors. Using the canonical basis, it can be identified with 
the first quadrant of $\mathbb R_+^n$. Let  $P=(p_1,\dots,p_n)\in N_+$ be a non-zero weight. It defines a canonical linear mapping on $\Ga_+(f)$ by
$P(\nu):=\sum_{i=1}^n p_i\nu_i$. We denote the minimal value of $P$ on $\Ga_+(f)$  by $d(P,f)$ and put 
$\De(P,f)=\{\nu\in \Ga_+(f)\,|\, P(\nu)=d(P,f)\}$. This is a face of $\Ga_+(f) $. We simply write as $d(P)$ or $\Delta(P)$ if no confusion is likely.
The dimension of $\De(P)$ can be $0,1,\dots, n-1$. Recall that 
$P,Q\in N_+$ are {\em equivalent} if $\De(P)=\De(Q)$. This equivalence classes gives $N_+$ a rational polyhedral cone subdivision $\Ga^*(f)$ and we call
 $\Ga^*(f)$ {\em the dual Newton diagram}. 
We also define a partial  order  in $\Ga^*(f)$ by
\[P\le Q\iff \De(P)\subset \De(Q).
\]
Denote the set of weights which are equivalent to  $Q$ by $[Q]$.
Note that the closure $\overline{[P]}$ is equal to the union  $\cup_{Q\ge P}[Q]$ and 
\[\dim\,\overline{[P]}=\dim\,[P]=n-\dim\,\De(P).
\]
The generators of the 1-dimensional cones of $\Ga^*(f)$ are called {\em vertices}. 
A weight  $P\in N_+$ is a vertex if and only if $\dim \De(P)=n-1$. We denote the set of vertices by $\mathcal V$.
 $P=(p_1,\dots, p_n)$ is called {\em strictly positive} if $p_i>0$ for any $i=1,\dots,n$.
Note that $\De(P)$ is compact if and only if $P$ is strictly positive. A vertex which is not strictly positive is either
the canonical basis
$e_i=(0,\dots,\overset {\overset{i}\smile}1,\dots, 1),\,1\le i\le n$ or corresponds to a vanishing coordinate subspace (see  \S \ref{Lojasiewicz non-degeneracy} 
for the definition).
\subsubsection{Face function}
For $\De\subset \Ga(f)$, put $f_\De(\bfz):=\sum_{\nu\in \De} c_\nu \bfz^\nu$ and we call $f_\De$ {\em the face function of $\De$}. For a weight vector $P\in N_+$, the face function associated with $P$ is defined by $f_P(\bfz)=f_{\De(P)}(\bfz)$. The monomial $\bfz^\nu$ and the integer point $\nu\in \Ga_+(f)$ correspond each other. If $\Delta$ is a compact face, $f_\Delta$ is a weighted homogeneous  polynomial.
\subsection{Normalized weight vector}
Take a weight vector $P=(p_1,\dots, p_n)$. Let $d=d(P)$  and assume that $d>0$. 
The hyperplane $\Pi$ in $\mathbb R^n$, defined by
$p_1\nu_1+\dots+p_n\nu_n=d$, contains  the face $\De(P)$ and all other points $\nu\in\Ga_+(f)\setminus\De(P)$ are above $\Pi$. Namely
$p_1\nu_1+\dots+p_n\nu_n>d$.
We call $\Pi$ {\em  the supporting hyperplane }of the weight vector $P$. For a weight vector $P$ with $d(P)>0$,
we define {\em the normalized weight vector} $\hat P$ of $P$ (with respect to $f$)  by
\[
\hat P:=(\hat p_1,\dots, \hat p_n),\quad \hat p_i=p_i/d(P).
\]
Herefater we use this notation $\hat P$ throughout this paper.  Using the normalized weight vector,  $d(\hat P)=1$ and the supporting hyperplane $\Pi$ is written as
\[\Pi:\quad \hat p_1\nu_1+\dots+ \hat p_n\nu_n=1.\]
Note that the $\nu_j$ coordinate of the intersection of $\Pi$ and $\nu_j$-axis is $1/{\hat p_j}$.

Assume that $\De(P)\cap \De(Q)\ne \emptyset$ and consider the line segment
$P_t:=tP+(1-t)Q,\,0\le t\le 1$. Note that $\De(P_t)=\De(P)\cap \De(Q)$ for any $0<t<1$
and the normalized weight vector $\hat P_t$ is simply given by
$\hat P_t=t\hat P+(1-t)\hat Q$, provided $d(P)>0$ and $d(Q)>0$.

The purpose of this paper is to give an upper bound explicitly for the \L ojasiewicz exponent using the combinatorial data of the Newton boundary.
 Then we give an application for the characterization of the monomials which  do not change  the topology by adding to $f$. For a weighted homogeneous non-degenerate polynomial,  we prove the estimation conjectured in \cite{B-K-O} under the \L ojasiewcz non-degeneracy.
 In \S 4,  we generalize these results for mixed functions.

\section{\L ojasiewicz exponent for convenient functions}
\subsection{Preliminary consideration}\label{Preliminary}
We first consider the estimation of
\nl  \L ojasiewicz exponent along an analytic curve $C(t)$ which is parametrized as 
follows.
Put $I:=\{i\,|\, z_i(t)\not\equiv 0\}$ and $I^c$ be the complement of $I$.
\begin{eqnarray}\label{test-curve}
C(t):  \begin{cases}
&\bfz(t)=(z_1(t),\dots, z_n(t)),\,\,\bfz(0)=0,\, \bfz(t)\in \mathbb C^{*I}\\
&z_i(t)=a_it^{p_i}+\text{(higher terms)},\,p_i\in \mathbb N,\, i\in I\end{cases}
\end{eqnarray}
Here we use the following  notations:
\[\begin{split}
\mathbb C^{I}:=&\{\bfz=(z_1,\dots, z_n)\,|\, z_j=0,\,j\not\in I\}\\
\mathbb C^{*I}:=&\{\bfz=(z_1,\dots, z_n)\,|\, z_i\ne 0,\iff i\in I\}\\
N_+^I:=&\{P=(p_1,\dots,p_n)\in N_+\,|\, p_j= 0,\,j\not\in I\}\\
N_+^{*I}:=&\{P=(p_1,\dots,p_n)\in N_+\,|\, p_i\ne 0\iff i\in I\}\\
f^I:=&f|_{\mathbb C^I}.
\end{split}
\]
Put  $P=(p_i)_{i\in I}\in N_+^{*I}$ and  $ d=d(P,f^I)$.
We define 
\[M(P):=\max\{p_j\,|\, j\in I\},\quad m(P):=\min \{p_j\,|\, j\in I\}.
\]
Note that ${{\ord}}\,\bfz(t)= {m(P)}$. Put $q:={\ord}\,\partial f^I(\bfz(t))$.
Under the non-degeneracy assumption, 
 we have the equalities:
\begin{eqnarray}\label{estimation1}
 d-M(P)&\le&q\le d-m(P)\,\,\text{or}\\
\frac{d-M(P)}{m(P)}&\le& \frac q{m(P)}
\le \frac{d}{m(P)}-1=\frac 1{m(\hat P)}-1.
\end{eqnarray}
Put $Vari(P)=\{z_j\,|\, \frac{\partial f_P}{\partial z_j}\not \equiv 0 \}$. Namely ${Vari}(P)$ is the set of variables which appear in
$f_P$.
Then we have the obvious estimations:
\begin{eqnarray}
&&\frac{\partial f}{\partial z_j}(\bfz(t))=\left(\frac{\partial f}{\partial z_j}\right )_P(\bfa)t^{d_j}+\text{(higher terms)},\, d_j=d(P,\frac{\partial f}{\partial z_j}),\\
&&{\ord}\,\frac{\partial f}{\partial z_j}(\bfz(t))\ge  d_j\ge d-p_j.
\end{eqnarray}
If $z_j\in {Vari}(P)$, $d_j=d(P,f)-p_j$ and otherwise $d_j > d-p_j$.
If $m(P)=p_j$, $ d/{m(P)}={1}/{{\hat p}_{j}}$ and 
this  is  equal to the $j$-th coordinate of the intersection of $\Pi(P)$ and $\nu_j$ axis.
We define {\em the \L ojasiewicz exponent of $f$ along $C(t)$} by
\[
\ell_0(C(t)):=\frac{{\ord}\,\partial f(\mathbf z(t))}{{\ord}\,\mathbf z(t)}.
\]
By (\ref{estimation1}) and by the non-degeneray assumption, we have
\begin{eqnarray}\label{estimation2}
{\ord}\, \partial f(\mathbf z(t))&\le& d-m(P)\\
{\ord}\, f_j(\mathbf z(t))&\ge& d(P,f_j)\ge d-p_j\\
\ell_0(C(t))&\le& \frac{d-m(P)}{m(P)}.
\end{eqnarray}
For a strictly positive weight vector $P=(p_1,\dots, p_n)\label{invariants}$, we define positive invariants
\begin{eqnarray}\label{invariants}
\begin{cases}
&\eta_{i,j}(P):=\dfrac{d-p_j}{p_{i}}=\dfrac{1-\hat p_j}{\hat p_{i}},\, \, \\
&\eta_{i,j}'(P):=\dfrac{d_j}{p_i}=\dfrac{\hat d_j}{\hat p_i}\\
 &\eta(P):=\dfrac{d-m(P)}{m(P)}=\dfrac 1{m(\hat P)}-1.
 \end{cases}
\end{eqnarray}
where $d_j=d(P,f_j)$ and $\hat d_j=d_j/d$.
\subsection{\L ojasiewicz exceptional monomial}
We say that
$f(\bfz)$  is {\em convenient} if for any $1\le j\le n$, Newton boundary $\Ga(f)$ intersects with 
$\nu_j$-axis  at a point $B_j=(0,\dots, \overset{\overset j\smile} b_j,\dots, 0)$.
Recall that a non-degenerate function $f(\bfz)$ has an isolated singularity at the origin, if it is convenient (\cite{Okabook}).
Assume that $f(\bfz)$ is convenient as above.
Define an integer $B:=\max\{b_j\,|\, j=1,\dots, n\}$ and let  $\mathcal L=\{i\,|\, b_i=B\}$. We call
$z_i^B, \, i\in \mathcal L$ a {\em\L ojasiewicz monomial} of $f$.
We say that a \L ojasiewicz monomial $z_i^B$ is {\em \L ojasiewicz exceptional} if  there exists $j,\,j\ne i$  and a monomial of the form
$z_jz_i^{B'}$,  with $B'<B-1$ which has a non-zero coefficient in $f$.

Consider the curve parametrized as (\ref{test-curve}) and assume that $I=\{1,\dots,n\}$.
Then we have seen 
\[
 \frac{{\ord}\,\partial f(\bfz(t))}{{\ord}\,\bfz(t)}\le \frac {d}{m(P)}-1\le B-1. 
\]
This implies  the following inequality holds in a small neighbourhood of the origin.
\begin{eqnarray}\label{ineq-conv}
\|\partial f(\bfz(t)\|&\ge c \|\bfz(t)\|^{B-1},\,\,c\ne 0.
\end{eqnarray}
If $z_i^B$ is \L ojasiewicz exceptinal and let $z_jz_i^{B'}, B'<B-1$  be as above.
Then $\frac{\partial f}{\partial z_j}$ has the monomial  $z_i^{B'}$ with non-zero coefficient and ${\ord}\, \frac{\partial f}{\partial z_j}(\bfz(t))$ can be
$p_iB'$ which is smaller than $p_i(B-1)$. In fact,  this is the case for the $i$-axis  curve $\bfz(t)$
where $z_i(t)=t$ and $z_j(t)\equiv 0$ for $ j\ne i$.
We assert
\begin{Assertion} The inequality $\ell_0(f)\le B-1$ holds for any analytic curve $\bfz(t)$.
\end{Assertion}
\begin{proof}As we have shown the assertion for the case $I=\{1,\dots,n\}$, we
need only  consider the case where  some of $z_i(t)$ is identically zero. In this case, put $I:=\{i\,|\, z_i(t)\not\equiv 0\}$.
Then $f^I:=f|_{\mathbb C^I}$ is a non-degenerate convenient function.
Thus by the above argument applied for $f^I$, we have
\[
{{\ord}}\, \partial f^I(\bfz(t))\le ({{\ord}}\,{\bfz}(t))\times ({B_I-1}).
\]
Here $B_I$ is defined similarly for $f^I$.
By the obvious inequality ${\ord}\, \partial f(\bfz(t))\le{\ord}\,\partial f^I(\bfz(t))$ and $B_I\le B$,
we get the inequality (\ref{ineq-conv}).
\end{proof}

For the practical calculation of the \L ojasiewicz exponent, we use the following criterian.
 This can be proved by 
 the Curve Selection Lemma (\cite{Milnor,Hamm1}).
 \begin{Proposition}\label{CurveSelection} A positive number 
$\theta$  satisfies the \L ojasiewicz inequality (\ref{Loj-ineq}) if the inequality
\[{{\ord}}\, \partial f(\bfz(t))\le \theta\times {\ord}\, \bfz(t)
\]
 is satisfied
along
any non-constant analytic curve $C(t)$ parametrized by an analytic path $\bfz(t)$ with $\bfz(0)={\bf 0}$. That is
$\ell_0(f)=\sup\, \ell_0(C(t))$ where $C(t)$ moves every possible analytic curves starting from the origin.
\end{Proposition}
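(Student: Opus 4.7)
The plan is to prove both assertions by applying the Curve Selection Lemma of Milnor--Hamm to a carefully chosen semi-analytic set; both statements reduce to the same contrapositive argument.

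First I would dispose of the easy direction. If $\|\partial f(\bfz)\|\ge c\|\bfz\|^{\theta}$ holds throughout a neighborhood $U$, and $\bfz(t)$ is any analytic arc with $\bfz(0)=\mathbf 0$, then substituting $\bfz(t)$ into the inequality and taking the $t$-order of both sides immediately gives
\[
{\ord}\,\partial f(\bfz(t))\le \theta\cdot {\ord}\,\bfz(t).
\]
This shows $\ell_0(C(t))\le\ell_0(f)$ for every curve, so $\sup_{C(t)}\ell_0(C(t))\le\ell_0(f)$.

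The core of the proof is the converse direction. I would argue by contradiction: suppose the hypothesis ${\ord}\,\partial f(\bfz(t))\le\theta\cdot{\ord}\,\bfz(t)$ holds for every analytic arc, but (\ref{Loj-ineq}) fails for $\theta$. Then for every $c>0$ there exists $\bfz$ arbitrarily close to $\mathbf 0$ with $\|\partial f(\bfz)\|<c\|\bfz\|^{\theta}$. Writing $\theta=p/q$ as a rational number (this is safe since the Łojasiewicz exponent is rational \cite{L-T,Te}; otherwise one approaches $\theta$ from above by rationals), I would consider for each $\eps>0$ the real semi-analytic set
\[
A_\eps:=\bigl\{\bfz\in U\setminus\{\mathbf 0\}\;:\;\|\partial f(\bfz)\|^{2q}\le \eps^{2q}\|\bfz\|^{2p}\bigr\}.
\]
The failure of (\ref{Loj-ineq}) means that $\mathbf 0\in \overline{A_\eps}$ for every $\eps>0$. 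By the Curve Selection Lemma, there exists a real-analytic arc $\bfz:[0,\delta)\to U$ with $\bfz(0)=\mathbf 0$ and $\bfz(t)\in A_\eps$ for $t>0$. Along such an arc
\[
\frac{\|\partial f(\bfz(t))\|}{\|\bfz(t)\|^{\theta}}\le \eps,
\]
and comparing $t$-orders of the two analytic functions $\|\partial f(\bfz(t))\|^{2q}$ and $\|\bfz(t)\|^{2p}$ yields
\[
2q\cdot{\ord}\,\partial f(\bfz(t))\ge 2p\cdot{\ord}\,\bfz(t),
\]
i.e.\ ${\ord}\,\partial f(\bfz(t))\ge \theta\cdot{\ord}\,\bfz(t)$; moreover, taking $\eps$ arbitrarily small forces a strict inequality on some such arc, contradicting the hypothesis. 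This establishes the Łojasiewicz inequality for $\theta$, and combined with the first paragraph gives the identity $\ell_0(f)=\sup_{C(t)}\ell_0(C(t))$ by taking infimum over admissible $\theta$.

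The main technical obstacle is the application of the Curve Selection Lemma: one must verify that the set $A_\eps$ is genuinely semi-analytic (hence the passage to the rational exponent $p/q$ and the squared-norm formulation), and that a real-analytic arc into $A_\eps$ can be reparametrized so that the orders of $\|\partial f(\bfz(t))\|$ and $\|\bfz(t)\|$ compare cleanly via expansions in a uniformizing parameter $t$. Once $\theta$ is rational and both sides are written as real-analytic functions of $t$, the order comparison is routine; the rest of the proof is just bookkeeping of the definitions.
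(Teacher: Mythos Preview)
Your approach via the Curve Selection Lemma is exactly what the paper intends: the paper gives no proof beyond the sentence ``This can be proved by the Curve Selection Lemma \cite{Milnor,Hamm1}.''  So you are supplying the argument the authors left to the reader, and the overall strategy is correct.

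There is, however, a genuine gap in your contrapositive step.  From the arc produced by Curve Selection inside $A_\eps$ you correctly deduce
\[
{\ord}\,\partial f(\bfz(t))\ \ge\ \theta\cdot{\ord}\,\bfz(t),
\]
but combined with the hypothesis this only gives \emph{equality} of orders, not a contradiction.  Your sentence ``taking $\eps$ arbitrarily small forces a strict inequality on some such arc'' does not follow: for each $\eps$ the Curve Selection Lemma hands you a \emph{different} arc $\bfz_\eps(t)$, along which the ratio $\|\partial f(\bfz_\eps(t))\|/\|\bfz_\eps(t)\|^\theta$ may perfectly well tend to a positive limit $\le\eps$.  Nothing prevents this family of arcs from having leading-coefficient ratios tending to $0$ while each individual arc still satisfies the order equality.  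No single arc with a \emph{strict} order inequality is produced.

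The standard repair is to introduce an auxiliary real variable and apply Curve Selection one dimension higher.  With $\theta=p/q$ rational, set
\[
X\ :=\ \bigl\{(\bfz,s)\in(\mathbb C^{n}\setminus\{0\})\times\mathbb R_{>0}\ :\ s\cdot\|\bfz\|^{2p}=\|\partial f(\bfz)\|^{2q}\bigr\}.
\]
Failure of the \L ojasiewicz inequality for $\theta$ means exactly that $(\mathbf 0,0)\in\overline{X}$.  Curve Selection then yields a real-analytic arc $(\bfz(t),s(t))\to(\mathbf 0,0)$ inside $X$; since $s(t)\to 0$, the ratio $\|\partial f(\bfz(t))\|^{2q}/\|\bfz(t)\|^{2p}$ genuinely tends to $0$ along this \emph{single} arc, forcing the strict inequality ${\ord}\,\partial f(\bfz(t))>\theta\cdot{\ord}\,\bfz(t)$ and the desired contradiction.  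Your handling of irrational $\theta$ by rational approximation, together with the attainment of $\ell_0(f)$ from \cite{L-T,Te}, then closes the argument.
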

Now we have the following  result for convenient non-degenerate functions.
\begin{Theorem}
\label{convenientLojasiewicz}
 Let $f(\bfz)$ 
 be a  non-degenerate convenient analytic function.
Then \L ojasiewicz exponent   $\ell_0(f)$ satisfies the inequality:
$\ell_0(f)\le B-1$.

Furthermore if $f$ has a \L ojasiewicz non-exceptional monomial, $\ell_0(f)=B-1$.
\end{Theorem}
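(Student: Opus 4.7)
The plan is to split the statement into the upper bound $\ell_0(f)\le B-1$ and, under the additional hypothesis, the matching lower bound $\ell_0(f)\ge B-1$.

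For the upper bound, everything needed is already in hand. The Assertion above shows that for every non-constant analytic arc $\bfz(t)$ through the origin,
\[
\frac{{\ord}\,\partial f(\bfz(t))}{{\ord}\,\bfz(t)} \le B - 1,
\]
by reducing (if some coordinates vanish identically on the arc) to the convenient non-degenerate function $f^I$ on $\mathbb C^I$, applying the estimate $\ell_0(C(t))\le d/m(P)-1\le B_I-1\le B-1$ coming from (\ref{estimation2}), and noting that ${\ord}\,\partial f(\bfz(t))\le {\ord}\,\partial f^I(\bfz(t))$. Combined with Proposition~\ref{CurveSelection} (the Curve Selection Lemma reformulation), this immediately gives $\ell_0(f)\le B-1$.

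For the lower bound, the first step is to exhibit a single analytic arc whose exponent equals $B-1$. Fix an index $i\in \mathcal L$ such that the \L ojasiewicz monomial $z_i^B$ is non-exceptional, and test against the coordinate arc
\[
C(t):\quad z_i(t)=t,\qquad z_k(t)\equiv 0\ \text{for}\ k\ne i,
\]
so that ${\ord}\,\bfz(t)=1$. The key computation is to identify, for each $j$, which monomials of $f$ survive the substitution $z_k=0$ ($k\ne i$) in $\partial f/\partial z_j$. For $j=i$, only pure powers $z_i^m$ contribute, and by convenience the lowest such is $z_i^{b_i}=z_i^B$, so $\partial f/\partial z_i(\bfz(t))=B\,t^{B-1}+(\text{higher})$ has order exactly $B-1$. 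For $j\ne i$, only monomials of the form $z_j z_i^m$ contribute, and the non-exceptionality hypothesis forbids any such monomial with $m<B-1$; hence $\partial f/\partial z_j(\bfz(t))$ either vanishes identically or has order $\ge B-1$. Consequently ${\ord}\,\partial f(\bfz(t))=B-1$ and $\ell_0(C(t))=B-1$, forcing $\ell_0(f)\ge B-1$.

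The main subtlety, which is more bookkeeping than obstacle, is the case analysis for the coordinates $j\ne i$ in computing $\partial f/\partial z_j(\bfz(t))$: one has to be careful that only monomials linear in a single off-$i$ variable survive, since this is exactly what makes the definition of ``\L ojasiewicz exceptional'' the right one (restricting to monomials $z_j z_i^{B'}$ and not more general ones). Once this is observed the two estimates combine to give $\ell_0(f)=B-1$.
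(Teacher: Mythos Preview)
Your proof is correct and follows essentially the same route as the paper. The only difference is in the lower-bound witness: you use the bare axis curve $z_i(t)=t$, $z_k(t)\equiv 0$ for $k\ne i$, whereas the paper chooses $P=(1,N,\dots,N)$ with $N$ large and the curve $\bfz(t)=(t,t^N,\dots,t^N)$ lying in $\mathbb C^{*n}$. Both arguments rest on the same observation (non-exceptionality forces $d(P,\partial f/\partial z_j)\ge B-1$ for $j\ne i$, while $d(P,\partial f/\partial z_i)=B-1$), and your axis curve is simply the $N\to\infty$ limit of the paper's; indeed the paper itself remarks on the axis curve just before the Assertion.
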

\begin{proof}
We have shown that $\ell_0(f)\le B-1$. We only need to show the existence of a curve $C(t)$ which takes the equality $q=B-1$,
assuming that  $f$ has a \L ojasiewicz non-exceptional monomial.
For this purpose,  we assume for simplicity $B=b_1$ and $z_1^B$ is non-exceptional. Note that 
the Newton boundary of $\frac{\partial f}{\partial z_i}$ does not touch the $\nu_1$ axis under $B-1$ for any $i>1$ by the assumption. Thus we can take a sufficiently large integer
$N$ and put $P=(1,N,\dots, N)$.
Note that $d(P,\frac{\partial f}{\partial z_1})=B-1$ and 
$d(P,\frac{\partial f}{\partial z_i})\ge B-1$ for any $i\ge 2$.
Consider the curve $C(t)$ defined by $\bfz(t)=(t,t^N,\dots, t^N)$.
Then the above observation tells us that 
\[
\partial f(\bfz(t))=(B,*,\dots, *)t^{B-1}+\text{(higher terms)}.
\]
Thus $\|\partial f(\bfz(t))\|\approx \|\bfz(t)\|^{B-1}$.
\end{proof}
In the above proof, if there is a monomial $z_1^{B'}z_j$ with $j\ne 1, B'<B-1$, we see that ${{\ord}}\,f_j(\bfz(t))= B'$.
%
Thus we have ${\ord}\,f_1(\bfz(t))>{\ord}\,f_j(\bfz(t))$.
The importance  of \L ojasiewicz exceptional monomial is observed by Lemarcik \cite{Len}. For  plane curves ($n=2$), we have also observed that it gives a fake effect 
to computation of  the complexity of plane curve singularity  but   exceptional monomials can be eliminated without changing the non-degeneracy
(Le-Oka \cite{Le-Oka}). 
Suppose that $c\, z_1^{B}+c' z_1^{B'}z_2$ with $B'\le B-2, c,c'\ne 0$ is in  a face function of $f$.
Then take the coordinate change $(z_1,z_2'):=(z_1,z_2+ (c/c')z_1^{B-B'})$ to kill the monomial 
$z_1^{B}$. This operation does not work for mixed polynomials.
\section{\L ojasiewicz exponents for non-convenient functions}
In this section, we consider again a non-degenerate  function $f(\bfz)$ with isolated singularity at the origin without  assuming  the convenience of the Newton boundary. It turns out that the estimation of \L ojasiewicz exponent is much more complicated without the convenience assumption.
We assume that $\Gamma(f)$ has dimension $n-1$ herefater. If the multiplicity at the origin is greater than 2, this condition is always satisfied.
\subsection{\L ojasiewicz non-degeneracy along a vanishing coordinate subspace}\label{Lojasiewicz non-degeneracy}\label{Lojasiewicz non-degeneracy}
Let $I$ be a subspace of $\{1,\dots, n\}$. We say that $\mathbb C^I$ is a 
{\em a vanishing coordinate subspace}  (\cite{OkaMix,OkaAf,EO13}) if $f^I(\bfz_I)\equiv 0$.  Here $f^I$ is the restriction of $f$ to $\mathbb C^I$.
We use the notation $\mathbb C^I=\{\bfz\,|\, z_{j}=0,\,j\not\in I\}$ and 
$\bfz_I=(z_i)_{i\in I}$. 
If further  $I=\{i\}$ is a vanishing coordinate subspace, we say $\mathbb C^{\{i\}}$ {\em a vanishing axis}. 
We say a  face $\Xi\subset \Ga_+(f)$ is {\em essentially non-compact} if there exists a
 non-strictly positive  weight function $Q=(q_1,\dots, q_n)$
such that $d(Q,f)>0$ and $\De(Q)=\Xi$. Let $I(Q)=\{i\,|\, q_i=0\}$.
We say also  $I(Q)$ {\em the vanishing direction of $\Xi$} and write also  as $I(\Xi)=I(Q)$.
Then the assumption $d(Q,f)>0$ implies $\mathbb C^I$ is a vanishing coordinate subspace.

Put $I=I(Q)$ and 
we assume that $I=\{1,\dots, m\}$.
Take  an $i\in I$. By the assumption, the gradient vector $\partial f$ does non vanish 
in a neighbourhood of the origin of $i$-axis except at the origin. This is possible only if 
there  exists a monomial $z_i^{n_i} z_j$ with a non-zero coefficient for some $j\ne i$
in the expansion of $f$. Then we observe that $j\not \in I$, because  $\mathbb C^I$ is  a vanishing coordinate subspace. Let $J_i$ be the set of $j\in I^c$ for which such a monomial $z_i^{n_i} z_j$ exists with a non-zero coefficient in the expansion of $f(\bfz)$.
$J_i\ne \emptyset$ for any $i\in I$. We define an integer $n_{ij}$ by
\[n_{ij}:=\min\{n_i\,|\, z_i^{n_i}z_j~\text{has a non-zero coefficient}\}
\] 
for a fixed $i\in I$ and $j\in J_i$. 
 For brevity, we put $n_{ij}=\infty$ if $j\notin J_i$.
Put
$B_{ij}:=(0,\dots, \overset {\overset i \smile} {n_{ij}},\dots, \overset {\overset j \smile}{1},\dots,0)$. Note that $B_{ij}\in\Ga(f)$.
 Put $J(I)=\cup_{i\in I} J_i$. 
We say that $f$ is {\em  \L ojasiewicz non-degengerate} if for  any strictly positive  weight vector $P\in N^{*I}$,
the following condition is  satisfied. Put 
$I':=\{i\,|\, p_i=m(P)\}$ and $J(P):=\cup_{i\in I'}J_i\subset J(I)$.
Then
the variety
\[\{\bfz\in \mathbb C^{*I}\,|\, \,\left(\left(f_j\right)^I\right)_{P}(\bfz_I)=0,\, \forall j\in J(P)\}.
\]
is empty. In other word, for any $\bfa\in \mathbb C^{*I}$, there exists $j\in J(P)$ such that $((f_j)^I)_{P}(\bfa)\ne 0$.
Here and hereafter we use the simplified  notation for the derivative function: $f_i(\bfz):=\frac{\partial f}{\partial z_i}(\mathbf z),\,i=1,\dots, n$.
\subsection{Jacobian dual Newton diagram} We consider the derivatives $f_i(\bfz)$,
$i=1,\dots, n$.
We consider their Newton boundary $\Ga(f_i)$, $i=1,\dots, n$. As we consider $n+1$ Newton boundaries,
we denote by $\Delta(P,f_i)$ the face of $\Gamma(f_i)$ where $P$ takes minimal value, $d(P,f_i)$.
We consider the following stronger equivalence relation in the space 
of non-negative weight vectors.   Two weight vectors $P,Q$ are {\em Jacobian equivalent} if
$\Delta(P,f_i)=\Delta(Q,f_i)$ for any $i=1,\dots,n$ and $\Delta(P,f)=\Delta(Q,f)$. 
We denote it by $P\underset J{\sim} Q$. This gives a polyhedral cone subdivision of $N_+$ and we denote this as 
$\Gamma_{J}^*(f)$ and we call it {\em the Jacobian dual Newton diagram of $f$}. $\Gamma_J^*(f)$ is a polyhedral cone subdivision of $N_+$
which is finer than $\Gamma^*(f)$.
 \newline\indent
The Jacobian dual Newton diagram  can be understood alternatively as follows. Let us consider the function 
$F(\mathbf z)=f(\mathbf z)f_1(\mathbf z)\cdots f_n(\mathbf z)$. Then $\Gamma_J^*(f)$ is essentially equivalent to the dual Newton diagram $\Gamma^*(F)$
of $F$. For any weight vector $P$, we have $\Delta(P,F)=\Delta(P,f)+\Delta(P,f_1)+\dots+\Delta(P,f_n)$ where the sum is Minkowski sum.
See \cite{Bo-Fe} for the definition.
For a weight vector $P$, the set of equivalent weight vectors in $\Gamma^*(f)$ and $\Gamma_J^*(f)$ is denoted as 
$[P]$ and $[P]_J$ respectively.
We consider the vertices of this subdivision.
We denote the set of strictly positive vertices of $\Gamma^*(f)$ and $\Ga_J^*(f)$ by $\mathcal V^+,\,\mathcal V_J^+$ respectively.
Recall that   $e_i=(0,\dots,\overset{\overset i\smile}1,\dots,0)$.
\begin{Proposition}
\begin{enumerate}
\item $P\underset J\sim Q$ implies $P\sim Q$ in $\Gamma^*(f)$. Conversely if $P\sim Q$ and $f_P(\mathbf z)$ contains all $n$-variables, $P\underset J\sim Q$.
\item A strictly positive weight vector $P$ is in $\mathcal V^+$ or $\mathcal V_J^+$ if and only if 
$\dim\Delta(P,f)=n-1$ or $\dim\,(\Delta(P,f)+\sum_i\,\Delta(P,f_i))=n-1$ respectively where the summation is  Minkowski sum.
\end{enumerate}
\end{Proposition}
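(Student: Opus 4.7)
The plan is to derive both parts from a single elementary identity relating face functions of $f$ to face functions of its partial derivatives, combined with the dimensional identity $\dim[P]=n-\dim\Delta(P,f)$ already noted in the paper.

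For part (1), the forward direction is immediate from the definitions: $P\underset{J}{\sim}Q$ demands $\Delta(P,f)=\Delta(Q,f)$ in particular. For the converse, the key step is to establish the identity
\[ \Delta(P,f_i)=\Delta(P,f)-e_i \]
whenever the variable $z_i$ appears in $f_P$. This is a direct computation from the definition: every $\nu\in\Delta(P,f)$ with $\nu_i\ge 1$ contributes a monomial $\bfz^{\nu-e_i}$ to $f_i$ with $P$-value $d(P,f)-p_i$, and any $\mu\in\Gamma(f_i)$ arises from some $\mu+e_i\in\Gamma(f)$, so $\mu$ outside the translated face would force $\mu+e_i\notin\Delta(P,f)$ and hence strictly larger $P$-value. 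Applying the identity to both $P$ and $Q$ and invoking the hypotheses $\Delta(P,f)=\Delta(Q,f)$ together with $f_P=f_Q$ (containing all $n$ variables) yields $\Delta(P,f_i)=\Delta(Q,f_i)$ for each $i$, whence $P\underset{J}{\sim}Q$.

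For part (2), the $\mathcal{V}^+$ characterization is a direct restatement of the identity $\dim[P]=n-\dim\Delta(P,f)$: a strictly positive $P$ is a vertex exactly when its equivalence class is one-dimensional, equivalently when $\dim\Delta(P,f)=n-1$. For $\mathcal{V}_J^+$ I would apply the same dimensional identity to the product $F=f\cdot f_1\cdots f_n$, whose ordinary dual Newton diagram $\Gamma^*(F)$ coincides with $\Gamma_J^*(f)$ by the very definition of Jacobian equivalence. This gives $\dim[P]_J=n-\dim\Delta(P,F)$, and the standard Minkowski sum formula $\Delta(P,F)=\Delta(P,f)+\sum_{i=1}^n\Delta(P,f_i)$ for Newton polyhedra of products, already cited in the paper, closes the argument.

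The only real obstacle is the converse direction of part (1): without the hypothesis that $f_P$ contains all variables, the face $\Delta(P,f)$ may contain no lattice point with $\nu_i\ge 1$, in which case $\Delta(P,f_i)$ sits strictly above the naive translate $\Delta(P,f)-e_i$ and the displayed identity fails. The all-variables hypothesis is precisely what is needed to make the identity hold simultaneously for $P$ and $Q$ and hence transfer equivalence from $f$ to each $f_i$.
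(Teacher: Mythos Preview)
The paper states this proposition without proof, treating both assertions as direct consequences of the definitions and of the discussion immediately preceding it (the identification of $\Gamma_J^*(f)$ with $\Gamma^*(F)$ for $F=f f_1\cdots f_n$ and the Minkowski-sum formula $\Delta(P,F)=\Delta(P,f)+\sum_i\Delta(P,f_i)$). Your write-up supplies exactly the details one would expect and is correct.

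One small point of notation: the displayed identity $\Delta(P,f_i)=\Delta(P,f)-e_i$ is not literally true as a set equality, since the translate $\Delta(P,f)-e_i$ may leave the non-negative orthant (e.g.\ $f=z_1^2+z_1z_2+z_2^2$ with $P=(1,1)$). What you actually use, and what is correct, is the identity of face functions $(f_i)_P=(f_P)_i$ whenever $z_i$ occurs in $f_P$; this gives $d(P,f_i)=d(P,f)-p_i$ and shows that the support points of $f_i$ lying on $\Delta(P,f_i)$ are exactly $\{\nu-e_i:\nu\in\operatorname{supp}(f_P),\ \nu_i\ge 1\}$. Since $f_P=f_Q$ under the hypothesis $P\sim Q$, the same set arises for $Q$, and since $\Delta(P,f)=\Delta(Q,f)$ already forces $I(P)=I(Q)$ (the recession cones of the common face coincide), the faces $\Delta(P,f_i)$ and $\Delta(Q,f_i)$ agree. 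Your argument for part~(2) via $\dim[P]_J=n-\dim\Delta(P,F)$ is the intended one and matches the paper's remark that $\Gamma_J^*(f)$ is essentially $\Gamma^*(F)$.
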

\noindent
Let  $\mathcal V_0$ be the set of vertices of $\Gamma^*(f)$ which is not strictly positive. 
\begin{Proposition}
Assume that $P\in \mathcal V_0$ 
and $\mathbf C^{I(P)}$ is a non-vanishing subspace. Then $P$ is one of $e_1,\dots, e_n$.
\end{Proposition}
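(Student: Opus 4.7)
The plan is to show that non-strict positivity of $P$, the vertex property, and non-vanishing of $\mathbb C^{I(P)}$ together force $P$ to have exactly one non-zero coordinate, so that as the primitive generator of a one-dimensional rational cone it must be some $e_k$.

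First I would pin down $d(P,f)$. Non-vanishing of $\mathbb C^{I(P)}$ means $f^{I(P)}\not\equiv 0$, so $f$ contains a monomial $\bfz^\nu$ with $\nu_j=0$ for $j\notin I(P)$. Because $p_i=0$ exactly on $I(P)$, we get $P(\nu)=0$, while $P(\mu)\ge 0$ for every $\mu\in\Ga_+(f)$; hence $d(P,f)=0$.

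Next I would compute $\De(P,f)$ explicitly. For any $\mu\in\Ga_+(f)$ with $P(\mu)=0$, strict positivity of $p_j$ for $j\notin I(P)$ forces $\mu_j=0$ there. Hence $\De(P,f)=\Ga_+(f)\cap \mathbb R^{I(P)}_+=\Ga_+(f^{I(P)})$, which, being a nonempty Newton polyhedron inside $\mathbb R^{I(P)}_+$, is full dimensional. So $\dim \De(P,f)=|I(P)|$.

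Finally I would invoke the identity $\dim\overline{[P]}=n-\dim\De(P)$ recalled in \S 1. Since $P$ is a vertex, $\overline{[P]}$ is one dimensional, forcing $\dim \De(P,f)=n-1$ and hence $|I(P)|=n-1$; exactly one coordinate $p_k$ of $P$ is non-zero. As the primitive integral generator of its ray, $P$ must equal $e_k$. The argument is essentially an unwinding of definitions; the only point requiring care is the dimension count, where the key fact is that once $f^{I(P)}\not\equiv 0$, the polyhedron $\Ga_+(f^{I(P)})$ is automatically $|I(P)|$-dimensional because it contains a translate of $\mathbb R^{I(P)}_+$.
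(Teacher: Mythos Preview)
Your argument is correct. The paper states this proposition without proof, so there is nothing to compare against; your approach of computing $d(P,f)=0$, identifying $\De(P,f)=\Ga_+(f)\cap\mathbb R_+^{I(P)}$ (which equals $\Ga_+(f^{I(P)})$ and hence has dimension $|I(P)|$), and then reading off $|I(P)|=n-1$ from the vertex condition $\dim\De(P)=n-1$ is exactly the natural one. One small remark: the equality $\Ga_+(f)\cap\mathbb R_+^{I(P)}=\Ga_+(f^{I(P)})$ deserves a word of justification for the inclusion $\subseteq$ (write $\nu$ as a convex combination of shifted exponents and observe that vanishing of the $I(P)^c$-coordinates forces each contributing exponent to lie in $\mathbb R^{I(P)}$), but this is routine and does not affect the validity of your dimension count.
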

\begin{Lemma}Let $P=(p_1,p_2,\dots, p_n)$ be a non-elementary
vanishing  vertex of $\Gamma^*(f)$ in $\mathcal V_0$ and put $I=I(P)$.
Then the following holds.
\begin{enumerate}
\item $f_P$ contains every  variable $z_1,\dots, z_n$. In particular, $\hat p_i\le 1$ for any $i$.
\item  Any  monomial  $z_i^{a}z_j,\,i\in I$ must  be contained in $f_P(\mathbf z)$, as
$\hat p_i=0$ and  $\deg_{\hat P}z_i^{a}z_j=\hat p_j\le 1$.
\item There are no monomials $\mathbf z^\nu\in \mathbb C[\mathbf z_I]$  in $f_P(\mathbf z)$.
\end{enumerate}
\end{Lemma}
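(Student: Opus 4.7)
The plan is to dispatch the three items in sequence, exploiting only the vertex condition $\dim\Delta(P,f)=n-1$, the non-elementary hypothesis, and the fact (from the previous proposition) that non-elementarity of a vertex in $\mathcal V_0$ forces $\mathbb C^{I(P)}$ to be a vanishing coordinate subspace, hence $d(P,f)>0$ and $\hat P$ is well defined.

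For (1), I would argue by contradiction: suppose $z_k$ does not appear in $f_P$ for some $k\in\{1,\dots,n\}$. Then $\Delta(P,f)$ lies in the hyperplane $\{\nu_k=0\}$, and also in the supporting hyperplane $\sum_j p_j\nu_j=d(P)$. These two hyperplanes are transverse unless they coincide, which would force $P$ proportional to $e_k$ with $d(P)=0$; since $P$ is a primitive generator of a $1$-dimensional cone, this would make $P=e_k$, contradicting non-elementarity. Thus the intersection is $(n-2)$-dimensional, contradicting $\dim\Delta(P,f)=n-1$. Consequently every $z_k$ occurs in $f_P$; picking any $\nu\in\Delta(P,f)$ with $\nu_k\ge 1$ gives $d(P)=\sum_j p_j\nu_j\ge p_k\nu_k\ge p_k$, so $\hat p_k\le 1$.

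For (2), let $z_i^a z_j$ with $i\in I$ be a monomial with non-zero coefficient in $f$. Since $i\in I$ we have $\hat p_i=0$, so $\deg_{\hat P}(z_i^a z_j)=a\hat p_i+\hat p_j=\hat p_j$. On the one hand this monomial lies in $\Gamma_+(f)$, forcing $\hat p_j\ge 1$; on the other hand part (1) gives $\hat p_j\le 1$. Hence $\hat p_j=1$, the monomial lies on $\Delta(P,f)$, and so it belongs to $f_P(\mathbf z)$. For (3), the vanishing condition $f^I\equiv 0$ means $f$ contains no monomial supported entirely in the variables $\mathbf z_I$; as $f_P$ is a subsum of $f$ (selecting monomials on $\Delta(P,f)$), the same holds for $f_P$.

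The only real obstacle is the dimension argument in (1), where I must make sure the transversality of $\{\nu_k=0\}$ with the supporting hyperplane is used cleanly and the exceptional case $P\propto e_k$ is eliminated precisely by the non-elementary hypothesis; the rest is a direct computation with $\hat P$-weights combined with the definition of a vanishing coordinate subspace.
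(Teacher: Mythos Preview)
Your proof is correct and follows essentially the same approach as the paper. For (1), the paper phrases the argument as ``$\Delta(P)\subset\{\nu_i=0\}$ implies $e_i\in\overline{[P]}$, hence $d(P,f)=d(e_i,f)=0$'', which is the dual-diagram translation of your hyperplane transversality argument; both reduce to the observation that an $(n-1)$-dimensional face cannot sit inside $\{\nu_k=0\}$ unless $P$ is proportional to $e_k$. Part (2) is identical. For (3) you invoke $f^I\equiv 0$ directly, while the paper instead notes that any $\mathbf z^\nu\in\mathbb C[\mathbf z_I]$ would have $\deg_{\hat P}\mathbf z^\nu=0<1$; both are one-line observations and amount to the same thing.
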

\begin{proof}
Suppose that $f_P$ does not contain the variable $z_i$ for some $i$. Then $\Delta(P)\subset \{\nu_i=0\}$. 
Then $e_i\in \overline{[P]}$ and thus a contradiction
$d(P,f)=d(e_i,f)=0$. This proves the first assertion.
Consider   $z_i^{a}z_j,\,i\in I $ in $f$. Consider the normalized vector $\hat P$.
Then $\hat p_i\le 1$ by the assertion (1) and  as  $\deg_{\hat P}z_i^az_j\ge 1$, this implies $\hat p_j=1$ and $z_i^{a}z_j$ must be in $f_P$.
If there is a monomial $\mathbf z^{\nu}$ as in the assertion, $\deg_{\hat P}\mathbf z^\nu=0$ and an obvious contradiction.
\end{proof}
\begin{Example} \label{example-Jdual}
Consider $f(\mathbf z)=(z_1^9+z_2^3+z_3^6)z_2+z_3^7+z_4^7$.
$\mathcal V_J^+$ has  vertices $e_1,e_2,e_3,e_4$ and $R,P,S$ where
\[\begin{split}
&R=(\frac 1{12},\frac 14,\frac 17,\frac 17),\quad f_R(\mathbf z)=z_1^9z_2+z_2^4+z_3^7+z_4^7\\
&P=(\frac 2{21},\frac 27,\frac 17,\frac 17),\quad f_P(\mathbf z)=z_3^7+z_4^7\\
&S=(0,1,\frac 17,\frac 17),\quad f_S(\mathbf z)=z_1^9 z_2+z_3^7+z_4^7.
\end{split}
\]
Note that $P\in \mathcal V_J^+\setminus \mathcal V^+$ as  $f_{2P}=z_1^9+z_2^3+z_3^6$ and $\deg_P f_{2P}+\frac 27=\frac87 > 1$. $S\in \mathcal V_0$ corresponds to the vanishing coordinate subspace $\mathbb C^{\{1\}}$.
The vertex $P$ is in the simplicial cone $\Cone (R,e_1,e_2)$ as $P=R+\frac 1{84}e_1+\frac1{28}e_2$.
Note that $\Cone (R,e_1,e_2)$ is a regular boundary region.  See the definition below. \end{Example}
\subsection{ Boundary region}
We consider equivalence classes $[P]$ and $[P]_J$ in $\Gamma^*(f)$
and $\Gamma_J^*(f)$ respectively. There exist three different  cases.
\newline
\noindent
1. An equivalent class $[P]$ (respectively $[P]_J$ ) is called {\em an inner  region}
if the closure $\overline{[P]}$ (resp. $\overline{[P]_J}$ ) does not contains any  vertex 
of $\mathcal V_0$ on the boundary.
\newline\noindent
2. $[P]$ (respectively $[P]_J$) is called {\em a regular boundary region}
if the closure $\overline{[P]}$ (resp. $\overline{[P]_J}$)  contains some  vertex $e_i$ but contains no vanishing vertex on the boundary.
 \newline\noindent
3. $[P]$ (resp. $[P]_J$) is called {\em a vanishing boundary region}, if  $\overline{[P]}$ (resp. $\overline{[P]_J}$) contains a vanishing  vertex $ Q\in \Gamma^*(f)$ (resp. $ Q\in \Gamma_J^*(f)$) on the boundary.
 
\subsection{Special admissible paths} 
Two weight vectors $P,Q$
are called {\em admissible}, (respectively {\em $J$-admissible}) if
$\De(P)\cap\De(Q)\ne \emptyset$ (resp. $\De(P)\cap\De(Q)\ne \emptyset$ and $\De(P,f_i)\cap \De(Q,f_i)\ne \emptyset$ for any $i$).
Any weight $R$ in the interior of an admissible lene segment $\overline{PQ}$ satisfies
$\De(R)=\De(P)\cap \De(Q)$ (resp. $\De(R)=\De(P)\cap \De(Q)$ and $\De(R,f_i)=\De(P,f_i)\cap \De(Q,f_i),\, i=1,\dots, n$).
 Take a weight vector $P=(p_1,\dots, p_n)$ which is  not strictly positive.
Put $I=\{i\,|\, p_i=0\}$. We say $P$ is a {\em vanishing weight } (respectively {\em non-vanishing weight}) if $f^I\equiv 0$ (resp. $f^I\not \equiv 0$).
\begin{Proposition}\label{non-vanishing-segment} (A path in a regular boundary region) Suppose that two weight  ${P, Q}$ are admissible, $Q$ is strictly positive weight and $P$ is a  non-vanishing weight vector
with $I=\{i|p_i= 0\}$.
Then weight vector $R\in \overline{PQ}$ on this line segment  (except $P$) is given in the normalized form as 
$\hat R_t=\hat Q+t P$ with $0\le t <\infty$. In this expression, $\hat R_{t}\to P$ when $t\to \infty$ and there exists a sufficiently large $\delta>0$
so that $m(\hat R_t)\equiv m(\hat Q_I)$ and $\eta(R_t)\equiv\eta(\hat Q_I)$ for $t\ge \delta$ and 
$\eta(\hat Q_I)\le \eta(\hat Q)$. 
\end{Proposition}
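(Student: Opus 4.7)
The plan is to exploit that $d(P,f)=0$, which is forced by the non-vanishing hypothesis on $P$: since $f^I\not\equiv 0$, there is a monomial $\bfz^\nu$ of $f$ supported in $I$, and $P(\nu)=\sum_{i\in I}p_i\nu_i=0$ because $p_i=0$ for $i\in I$. This is what will make $\hat Q+tP$ remain automatically normalized throughout the segment.

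First I would verify the claimed parametrization. Write a point of $\overline{PQ}\setminus\{P\}$ as $R_s=(1-s)Q+sP$ with $s\in[0,1)$, and set $\Xi:=\De(P)\cap\De(Q)$, which is non-empty by admissibility and on which $P(\nu)=0$ and $Q(\nu)=d(Q,f)$. Then $R_s(\nu)=(1-s)d(Q,f)$ on $\Xi$, while off $\Xi$ at least one of $P(\nu)>0$ or $Q(\nu)>d(Q,f)$ holds and therefore $R_s(\nu)$ is strictly larger. Hence $d(R_s,f)=(1-s)d(Q,f)$, and the normalized weight is $\hat Q+\frac{s}{(1-s)d(Q,f)}P$. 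Setting $t:=s/((1-s)d(Q,f))$ realizes the claimed form $\hat R_t=\hat Q+tP$ with $t$ sweeping $[0,\infty)$ as $s$ sweeps $[0,1)$.

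Next I would analyze $m(\hat R_t)$ coordinatewise. The $i$-th component of $\hat R_t$ equals $\hat q_i+tp_i$, which is the constant $\hat q_i$ for $i\in I$ and is strictly increasing without bound for $i\notin I$ (since $p_i>0$ there). Taking $\delta:=\max_{i\notin I}\max\{0,(m(\hat Q_I)-\hat q_i)/p_i\}$, the minimum for $t\ge\delta$ is attained on some index in $I$, giving $m(\hat R_t)=\min_{i\in I}\hat q_i=m(\hat Q_I)$. Plugging this into definition~(\ref{invariants}) yields $\eta(R_t)=1/m(\hat Q_I)-1=\eta(\hat Q_I)$ for every $t\ge\delta$.

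Finally, the comparison $\eta(\hat Q_I)\le\eta(\hat Q)$ is immediate from $\min_{i\in I}\hat q_i\ge\min_{1\le i\le n}\hat q_i$: inverting reverses the inequality and subtracting $1$ preserves it. The only genuine obstacle is the parametrization step, i.e.\ confirming that $\hat Q+tP$ really is the normalized form of every interior point of $\overline{PQ}$; once admissibility pins down the common face $\Xi$ and the non-vanishing hypothesis forces $d(P,f)=0$, the rest of the argument is routine coordinatewise bookkeeping.
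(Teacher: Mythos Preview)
Your proof is correct and follows the same approach as the paper, which gives only a one-line argument (``For $j\not\in I$, $\hat q_j+tp_j\to\infty$ and the assertion follows immediately''). You supply the details the paper omits---in particular the verification that $d(P,f)=0$ and hence that $\hat Q+tP$ really is normalized, and an explicit choice of $\delta$---but the underlying idea is identical.
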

\begin{proof}For $j\not\in I$, $\hat q_j+tp_j\to \infty$ and  the assertion follows immediately.
Here $m(\hat Q_I)=\min\{\hat q_j|j\in I\}$.
\end{proof}%
\begin{Proposition} (A path in a vanishing boundary region)
Suppose that $Q$ is strictly positive and  $P$ is a vanishing weight vector. Put $I=\{i|p_i=0\}$. Then $\hat Q_t=(1-t)\hat Q+t\hat P,\,(0\le s\le 1)$
parametrize the weights on the line segment $\overline{QP}$ and 
  we have the following.
\begin{enumerate}
\item Suppose that  $\hat q_j\ge \hat q_i$ for some $i\in I,\,j\not\in I$. Then  $(1-t)\hat q_j+t \hat p_j\ge (1-t) \hat q_i$ for $0\le t\le 1$.
\item If there is a $j\not \in I$ such that   $\hat q_j< \hat q_i$ for some $i\in I$, there exists $0<t_0<1$ which satisfies  
$(1-t_0)\hat q_j+t_0 \hat p_j=(1-t_0) \hat q_i$.
\end{enumerate}
\end{Proposition}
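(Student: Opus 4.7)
The plan is to reduce both parts to elementary sign observations plus the intermediate value theorem. Since $i\in I$ we have $\hat p_i=0$, while $j\notin I$ forces $p_j>0$ and therefore $\hat p_j>0$ (the vanishing-weight normalization makes sense because $f^I\equiv 0$ forces every monomial of $f$ to involve some $z_k$ with $k\notin I$, so $d(P,f)>0$).

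For part (1), I would just chain two inequalities. For $t\in[0,1]$ the term $t\hat p_j$ is nonnegative, so
\[
(1-t)\hat q_j+t\hat p_j\;\ge\;(1-t)\hat q_j,
\]
and combining with the hypothesis $\hat q_j\ge \hat q_i$ and $1-t\ge 0$ gives $(1-t)\hat q_j\ge(1-t)\hat q_i$, which yields the claim.

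For part (2), I would introduce the affine function
\[
g(t)\;:=\;(1-t)\hat q_j+t\hat p_j-(1-t)\hat q_i\;=\;(1-t)(\hat q_j-\hat q_i)+t\hat p_j.
\]
The hypothesis $\hat q_j<\hat q_i$ gives $g(0)=\hat q_j-\hat q_i<0$, and $j\notin I$ gives $g(1)=\hat p_j>0$. The intermediate value theorem (or the explicit solution $t_0=(\hat q_i-\hat q_j)/(\hat q_i-\hat q_j+\hat p_j)$) then produces a unique $t_0\in(0,1)$ with $g(t_0)=0$, which is exactly the asserted equality.

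There is no real obstacle here; both assertions are one-line consequences of the normalized-weight conventions. The only point worth recording before the sign arguments is that $\hat Q_t:=(1-t)\hat Q+t\hat P$ is indeed the normalized weight along $\overline{QP}$, and this follows from the observation made earlier in the preliminary discussion: for any $\nu\in \Delta(Q)\cap\Delta(P)$ we have $\hat Q(\nu)=\hat P(\nu)=1$, so $\hat Q_t(\nu)=1$, i.e.\ $d(\hat Q_t,f)=1$.
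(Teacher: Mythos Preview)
Your proof is correct and follows essentially the same approach as the paper. The paper's own proof is extremely terse---it only records the limits $(1-t)\hat q_j+t\hat p_j\to \hat p_j>0$ and $(1-t)\hat q_i\to 0$ as $t\to 1$ to justify part (2), leaving part (1) implicit---so your argument is in fact more complete, spelling out the sign observations and the intermediate value step explicitly, together with the justification that $d(P,f)>0$ and that $\hat Q_t$ really is the normalized parametrization.
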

\begin{proof}Third assertion follows from the following property.
\[(1-t)\hat q_j+t \hat p_j\underset{t\to 1}\to \hat p_j>0,\,(1-t)\hat q_i\underset{t\to 1}\to 0.
\]
\end{proof}
\subsection{Key lemma}
First we prepare an elementary lemma.
\begin{Lemma} \label{monotone1}
Consider a linear fractional function
$\varphi(s)=\frac{as+b}{cs+d}$ where $a,b,c,d$ are real numbers such that $(c,d)\ne (0,0)$ and $cs+d\ne 0$ for $0\le s\le 1$.
Then if $\varphi'(s)\not\equiv 0$, the sign of $\varphi'(s)$ does not change i.e.,
  $\varphi'(s)>0$ or $\varphi'(s)<0$  for any $s,\,0\le s\le 1$. Thus $\varphi(s)$ is a monotone function 
   on $[0,1]$.
\end{Lemma}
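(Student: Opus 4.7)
The plan is to compute $\varphi'(s)$ directly by the quotient rule and observe that its sign is controlled by a single constant, the determinant $ad-bc$. Specifically, I would write
\[
\varphi'(s) \;=\; \frac{a(cs+d)-c(as+b)}{(cs+d)^2} \;=\; \frac{ad-bc}{(cs+d)^2}.
\]
The hypothesis guarantees that $(cs+d)^2>0$ for every $s\in[0,1]$, so the denominator is strictly positive throughout the interval. Therefore the sign of $\varphi'(s)$ coincides with the sign of the constant $ad-bc$, and in particular does not depend on $s$.

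Next I would dispose of the degenerate case: if $ad-bc=0$, then $\varphi'(s)\equiv 0$, contradicting the assumption $\varphi'\not\equiv 0$. Hence $ad-bc\neq 0$, and consequently $\varphi'(s)$ is either strictly positive on $[0,1]$ or strictly negative on $[0,1]$. Monotonicity of $\varphi$ on $[0,1]$ then follows from the standard sign-of-derivative criterion.

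The only potential obstacle is purely notational rather than mathematical: one must be careful that the quotient rule is legitimately applicable, which requires $cs+d\neq 0$ throughout $[0,1]$; this is exactly the hypothesis. No additional case analysis is needed, since the compactness of $[0,1]$ together with continuity of $cs+d$ ensures a uniform lower bound on $(cs+d)^2$. Thus the proof reduces to a one-line computation followed by the elementary observation that a continuous function of constant nonzero sign cannot change sign.
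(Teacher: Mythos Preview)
Your proof is correct and follows exactly the paper's approach: the paper's entire proof consists of the single displayed formula $\varphi'(s)=\dfrac{ad-bc}{(cs+d)^2}$, and you have simply spelled out the quotient-rule computation and the sign analysis that this formula immediately implies.
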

\begin{proof}Assertion follows from
\[\vphi'(s)=\frac{ad-bc}{(cs+d)^2}.
\]
\end{proof}
 Assume that $P,Q$ are   strictly positive weight vectors.
Then the weights on this line segment $\overline{PQ}$
  can be parametrized  normally as
$\hat R_s,\,0<s<1$:
 \[
\hat R_s=s\hat P+(1-s)\hat Q.
\]
Putting $\hat P=(\hat p_1,\dots, \hat p_n)$ and $\hat Q=(\hat q_1,\dots, \hat q_n)$,
we can write  $\hat R_s=(s\hat p_1+(1-s)\hat q_1,\dots, s\hat p_n+(1-s)\hat q_n)$.
We consider the quantities defined in (\ref{invariants}):
\[\begin{split}
&\eta_{ij}(\hat R_s)=\frac{1-\hat r_{s,j}}{\hat r_{s,i}}=\frac {1-(s\hat p_j+(1-s)\hat q_j)}{ s\hat p_i+(1-s)\hat q_i},\,\\
&\eta(R_s)=\frac{1-m(\hat R_s)}{m(\hat R_s)}\\
&\eta_{ij}'(R_s)=\frac{\deg(\hat R_s,f_j)}{\hat r_{s,i}}=\frac {\deg(\hat R_s,f_j)}{ s\hat p_i+(1-s)\hat q_i}
\end{split}
\]
Applying Lemma\,\ref{monotone1}, we have
\begin{Lemma}\label{monotone2}
 Assume that $P,Q$ are   strictly positive weight vectors.
\begin{enumerate}
\item 
Assume that $P,Q$ are addmissible. Then we have
\[
 \eta_{ij}(\hat R_s)\le \max\, \{\eta_{ij}(\hat P),\eta_{ij}(\hat Q)\},\, 0<s<1.
\]
In particular, we  have
\[
 \eta(\hat R_s)  \le \max\{\eta(\hat P),\eta(\hat Q)\}
\]
\item Assume that $P,Q$ are $J$-addmissible. Then we have
\[
 \eta_{ij}'(\hat R_s)\le \max\, \{\eta_{ij}'(\hat P),\eta_{ij}'(\hat Q)\},\, 0<s<1.
\]
\end{enumerate}
\end{Lemma}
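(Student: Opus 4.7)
The plan is to reduce both inequalities to Lemma~\ref{monotone1} by exhibiting each $\eta$-quantity as a linear fractional function of $s\in[0,1]$; once monotonicity on $[0,1]$ is established, the maximum over $[0,1]$ is attained at an endpoint and so equals the value at $\hat P$ or $\hat Q$. Strict positivity of $P$ and $Q$ guarantees that every denominator $s\hat p_i+(1-s)\hat q_i$ remains strictly positive on $[0,1]$, so the non-vanishing hypothesis of Lemma~\ref{monotone1} is automatic.

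For part (1), admissibility ensures that $\hat R_s=s\hat P+(1-s)\hat Q$ is already the normalized weight vector associated with $R_s=sP+(1-s)Q$: choosing any $\nu\in\Delta(P)\cap\Delta(Q)$ gives $\hat R_s(\nu)=1$, while any $\nu'\in\Gamma_+(f)$ outside the common face satisfies $\hat P(\nu')>1$ or $\hat Q(\nu')>1$ and hence $\hat R_s(\nu')>1$. Thus the formula $\eta_{ij}(\hat R_s)=(1-\hat r_{s,j})/\hat r_{s,i}$ is legitimate and has numerator and denominator both affine in $s$, and Lemma~\ref{monotone1} yields the claimed endpoint bound. The companion statement for $\eta(\hat R_s)$ follows from the pointwise identity $\eta(\hat W)=\max_k\eta_{kk}(\hat W)$ (immediate from $\eta(\hat W)=1/m(\hat W)-1$), applied successively to $\hat W=\hat R_s,\hat P,\hat Q$:
\[
\eta(\hat R_s)=\max_k\eta_{kk}(\hat R_s)\le \max_k\max\{\eta_{kk}(\hat P),\eta_{kk}(\hat Q)\}=\max\{\eta(\hat P),\eta(\hat Q)\}.
\]

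For part (2), the only new ingredient is linearity of $d(\hat R_s,f_j)$ in $s$. Picking $\mu\in\Delta(P,f_j)\cap\Delta(Q,f_j)$ (non-empty by $J$-admissibility) gives $\hat R_s(\mu)=s\,d(\hat P,f_j)+(1-s)\,d(\hat Q,f_j)$, and repeating the endpoint-exclusion argument of part (1) now on $\Gamma_+(f_j)$ shows that this value is in fact the minimum $d(\hat R_s,f_j)$. Hence $\eta'_{ij}(\hat R_s)$ is linear fractional in $s$ with strictly positive denominator, and Lemma~\ref{monotone1} closes the case.

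The main obstacle is precisely this linearity step: for a general weight vector $R$ the function $R\mapsto d(R,f_j)$ is only piecewise linear on $N_+$, with breaks along the walls of $\Gamma_J^*(f)$. The hypothesis of $J$-admissibility is exactly what keeps the segment $\overline{PQ}$ inside a single cone, so that $d(\hat R_s,f_j)$ is genuinely linear in $s$. Admissibility alone (as used in part (1) to linearize $d(\hat R_s,f)$) does not suffice for part (2); the stronger $J$-admissibility is indispensable.
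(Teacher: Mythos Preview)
Your proof is correct and follows the same approach as the paper, which simply writes the three quantities $\eta_{ij}(\hat R_s)$, $\eta(\hat R_s)$, $\eta'_{ij}(\hat R_s)$ in the displayed linear-fractional form and says ``Applying Lemma~\ref{monotone1}.'' You have supplied the details the paper leaves implicit: the verification that $\hat R_s=s\hat P+(1-s)\hat Q$ is genuinely normalized under admissibility, the identity $\eta(\hat W)=\max_k\eta_{kk}(\hat W)$ used to deduce the $\eta$-bound from the $\eta_{ij}$-bounds, and the linearity of $d(\hat R_s,f_j)$ under $J$-admissibility.
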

\subsubsection{Invariants to be used for the estimation}
Let $\mathcal V_J^+$ be the set of strictly positive vertices of $\Gamma_J^*(f)$  and consider the subset  $\mathcal V_J^{++}\subset \mathcal V_J^+$ which are in a vanishing boundary region ${[Q]}$ of $\Gamma^*(f)$ for some $Q$.
The numbers of $\mathcal V^+, \,\mathcal V_J^+,\,\mathcal V_J^{++}$ are finite. We define the following invariants.
\[\begin{split}
&\eta_{max}(f):=\max\{\eta(P)\,|\, P\in \mathcal V^+\}\\
&\eta_{J,max}(f):=\max\{ \eta(P)\,|\, P\in \mathcal V^+\cup \mathcal V_J^{++}\},\\
&\eta_{J,max}'(f):=\max\{\eta_{k,i}'(R)\,|\,R\in  \mathcal V_J^{++},\,k,i=1,\dots,n\}\\
&\eta''_{J,max}(f):=\max\{\eta_{J,max}(f),\eta'_{J,max}(f)\}.
\end{split}\]
Here $\eta_{k,i}'(R)={d(R,f_i)}/{r_k}$.
\subsection{Main theorem}\label{MainTheorem2}
The following estimation is our main result which is  a modified weaker version of the  assertion in \cite{Fukui}. Recall that we assume that $\dim\, \Ga(f)=n-1$.
\begin{Theorem}\label{main-theorem}
Let $f(\bfz)$ be a  non-degenerate \L ojasiewicz non-degenerate function with an isolated singularity at the origin.
Then \L ojasiewicz exponent $\ell_0(f)$ has the estimation
\[\ell_0(f)\le \eta_{J,max}''(f).\]
If $\mathcal V_J^{++}=\emptyset$, the estimation can be replaced by a better one
\[\ell_0(f)\le \eta_{max}(f).\]
\end{Theorem}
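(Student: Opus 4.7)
The plan is to apply the Curve Selection Lemma (Proposition \ref{CurveSelection}) to reduce to showing that $\ell_0(C(t)) = \ord\,\partial f(\bfz(t))/\ord\,\bfz(t) \le \eta''_{J,max}(f)$ uniformly over all non-constant analytic curves $\bfz(t)$ through the origin. Parametrize such a curve as in (\ref{test-curve}), set $I = \{i : z_i(t)\not\equiv 0\}$, and extract the weight $P = (p_i)_{i \in I}$. Extending by zeros gives $\tilde P \in N_+$, which lies in the closure of some cone $\tau$ of the Jacobian dual Newton diagram $\Gamma_J^*(f)$. The proof splits according to whether $\mathbb C^I$ is a vanishing coordinate subspace.

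In the non-vanishing case, restrict to $\mathbb C^I$: using $\ord\,\partial f(\bfz(t))\le \ord\,\partial f^I(\bfz_I(t))$ together with the fact that $f^I$ inherits non-degeneracy, reduce to the subcase $I=\{1,\dots,n\}$, where $\hat P$ is strictly positive and interior to $\tau$. The preliminary analysis of \S\ref{Preliminary} then yields $\ell_0(C(t))\le \eta(\hat P)$. Express $\hat P$ as a convex combination of the vertices of $\tau$ and iterate Lemma \ref{monotone2}(1); Proposition \ref{non-vanishing-segment} handles non-vanishing boundary vertices (along which $\eta$ only decreases), so the bound is governed by the strictly positive vertices of $\tau$. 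If $\tau$ lies in an inner or regular boundary region of $\Gamma^*(f)$, those vertices belong to $\mathcal V^+$, hence $\eta(\hat P)\le \eta_{max}(f)$; if $\tau$ lies in a vanishing boundary region, they belong to $\mathcal V_J^{++}$, hence $\eta(\hat P)\le \eta_{J,max}(f)$.

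In the vanishing case ($f^I\equiv 0$), Łojasiewicz non-degeneracy supplies some $j\in J(P)$ with $((f_j)^I)_P(\bfa)\ne 0$ at the leading coefficient $\bfa$ of $\bfz_I(t)$, forcing $\ord\,f_j(\bfz(t))=d(P, f_j)$. Consequently $\ell_0(C(t))\le d(P, f_j)/m(P) = \eta'_{k,j}(\hat P)$ for any $k\in I$ with $p_k=m(P)$. Since $\tilde P$ is vanishing, $\tau$ sits in a vanishing boundary region and its strictly positive vertices lie in $\mathcal V_J^{++}$. Applying Lemma \ref{monotone2}(2) along a $J$-admissible segment from a strictly positive interior point of $\tau$ toward $\tilde P$, with $k$ chosen so $\hat r_{s,k}$ remains bounded below on the interior, gives $\eta'_{k,j}(\hat P)\le \max\{\eta'_{k,j}(\hat R): R\in \mathcal V_J^{++}\}\le \eta'_{J,max}(f)$. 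Combining both cases yields $\ell_0(f)\le \eta''_{J,max}(f)$; when $\mathcal V_J^{++}=\emptyset$, no cone of $\Gamma_J^*(f)$ with strictly positive interior can meet a vanishing boundary region of $\Gamma^*(f)$, so the vanishing subcase cannot contribute and only the $\eta_{max}(f)$ bound is needed.

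The main obstacle will be the vanishing case, where $\tilde P$ lies on the boundary of $N_+$ and invariants such as $\eta(\tilde P)$ blow up; the monotonicity argument requires careful choice of the coordinate $k$ attaining $m(P)$ together with a limiting argument as the segment parameter approaches $\tilde P$, and the $J$-admissibility of the chosen segment must be verified before invoking Lemma \ref{monotone2}(2).
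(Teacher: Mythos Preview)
Your decomposition into ``$f^I\not\equiv 0$'' versus ``$f^I\equiv 0$'' misses the hardest case, and the bound you claim in the first branch is false in general.

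Consider a curve with $I=\{1,\dots,n\}$, so $P$ is strictly positive, and suppose $[P]_J$ is a \emph{vanishing} boundary region (the paper's Case C-3-3). You assert that by writing $\hat P$ as a convex combination of the vertices of the surrounding cone $\tau\subset\Gamma_J^*(f)$ and iterating Lemma~\ref{monotone2}(1), together with Proposition~\ref{non-vanishing-segment} for the non-vanishing boundary vertices, one obtains $\eta(\hat P)\le \eta_{J,\max}(f)$. But $\tau$ necessarily has at least one \emph{vanishing} vertex $Q$ (not strictly positive, $d(Q,f)>0$), and Lemma~\ref{monotone2}(1) requires both endpoints to be strictly positive. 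Along any segment approaching $\hat Q$ one has $m(\hat R_s)\to 0$ and hence $\eta(\hat R_s)\to\infty$; indeed $\eta(\hat P)$ itself is unbounded on the interior of $\tau$ as $\hat P\to\hat Q$. So the inequality $\eta(\hat P)\le\eta_{J,\max}(f)$ simply fails, and with it your bound on $\ell_0(C(t))$ via $\eta(\hat P)$.

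The paper's resolution of this case is qualitatively different: it does \emph{not} try to bound $\eta(\hat P)$. Instead, using Lemma~\ref{nice-segment} it places $\hat P$ on a segment $\overline{RQ}$ with $R$ $J$-simplicially positive and $Q$ a vanishing weight, and then invokes the \L ojasiewicz non-degeneracy \emph{for the vanishing subspace $\mathbb C^{I(Q)}$} (not for $\mathbb C^I=\mathbb C^n$) to produce a specific $j_0$ with $(f_{j_0})_{\hat R_s}(\bfa)\ne 0$ for all $0<s<1$. This gives the sharper starting estimate $\ell_0(C(t))\le d(\hat R_{s_0},f_{j_0})/m(\hat R_{s_0})$; the numerator is controlled by the almost-axis monomial $z_{i_0}^{n_{i_0,j_0}}z_{j_0}$ via $d(\hat R_s,f_{j_0})\le (1-s)\hat r_{i_0}\,n_{i_0,j_0}$, and the factor $(1-s)$ cancels against $m(\hat R_s)$ near $s=1$. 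A further case split (whether $m(\hat R)$ is realized inside $I(Q)$ or not, and whether the relevant coordinate of $\hat R_s$ is increasing or decreasing) yields bounds by $n_{i_0,j_0}$, $\eta(\hat R)$, or $\eta'_{k,j_0}(\hat R)$, and Lemma~\ref{estimation-vanishing} finishes by showing $n_{i,j}\le\eta_{\max}(f)$. You have confined the use of \L ojasiewicz non-degeneracy to curves lying in a vanishing subspace; the point is that it is equally indispensable for curves in $\mathbb C^{*n}$ whose weight sits in a vanishing boundary region.

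A secondary issue: even in the inner/regular boundary case, the strictly positive vertices of a cone $\tau\subset\Gamma_J^*(f)$ lie in $\mathcal V_J^+$, not in $\mathcal V^+$ as you write; one still needs the further reduction in $\Gamma^*(f)$ (as in Cases C-1 and C-2) to land in $\mathcal V^+$.
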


The proof of Theorem \ref{MainTheorem2} will be given in \S \ref{ProofOfMainTheorem}.

\subsubsection{Test Curve}
We consider an analytic curve $C(t),\,-\eps<t\le \eps$ parametrized as before (\ref{test-curve}),
\begin{eqnarray}\label{test-curve2}
C(t):  \begin{cases}
&\bfz(t)=(z_1(t),\dots, z_n(t)),\,\,\bfz(0)=0,\, \bfz(t)\in \mathbb C^{*I}\\
&z_i(t)=a_it^{p_i}+\text{(higher terms)},\quad i\in I\\
&z_j(t)\equiv 0,\,\,j\not\in I
\end{cases}
\end{eqnarray}
For simplicity, we assume that $I=\{1,\dots, m\}$.
Put $P=(p_1,\dots, p_m)\in N_+^{*I}$ and $\bfa=(a_1,\dots, a_m)\in\mathbb C^{*I}$.
We are interested in  a best possible upper bound for the positive quantity 
$\ell_0(C(t))=
{\ord}\, \partial f(\bfz(t))/{\ord}\,\bfz(t)$.
Using the notation $f_j=\partial f/\partial z_j$,  
we have  the expansion
\begin{eqnarray}
f_j(\bfz(t))&=&
                 ( f_j)_P(\bfa) t^{d(P,f_j)}+\text{(higher terms)}. 
 \end{eqnarray}
Note that if $f_P(\bfz)$ contains the variable  $z_j$, 
 \begin{eqnarray} 
 ( f_j)_P(\bfa)= (f_P)_j(\bfa) ,\quad {d(P,f_j)}={d(P,f)-p_j}.\end{eqnarray}
\subsubsection{Curves corresponding to a  strictly
 positive weight vector} In the previous section, we have seen that a test curve $C(t)$ gives a pair $(P,\mathbf a)\in N_+^I\times \mathbf C^{*I}$.
 We consider the converse in the case $I=\{1,\dots, n\}$.
Assume we have a strictly positive integer weight vector $P=(p_1,\dots, p_n)\in N_{+}^n\cap \mathbb Z^n$. 
Taking a  coefficient vector $\bfa=(a_1,\dots, a_n)\in\mathbb C^{*n}$,
we associate an analytic curve
\[
C_P(t,\bfa):\, \bfz(t):=(a_1t^{p_1},\dots, a_nt^{p_n}).
\]
The test curve (\ref{test-curve2}) gives the data $(P,\mathbf a)$ and if $I=\{1,\dots, n\}$, $C_P(t,\bfa)$ and $C(t)$ differs only higher terms.
In this case, we also use the notation as
$\ell_0(P)$  instead of $\ell_0(C_P(t,\mathbf a))$ or $\ell_0(C(t))$ by an abuse of notation.
Then by the above discussion and by the non-degeneracy assumption, we have
\begin{eqnarray}\label{estmation2}
{\ord}\,\partial f(\bfz(t))&\le& d(P,f)-m(P)\\
\frac{{\ord}\,\partial f(\bfz(t))}{{\ord}\,\bfz(t)}&\le &\frac{d(P,f)}{m(P)}-1\\
&=& \frac 1{m(\hat P)}-1.
\end{eqnarray}
Note that $\eta(\hat P)=\eta(P)$.
This estimation does not depend on the choice of representative of
the equivalence class of $P$ and the choice of $\bfa$. 
The weakness of the above estimation is that $m(\hat P)$ can be arbitrary small in the vanishing boundary region which makes $\eta(\hat P)=\eta(P)$ unbounded.
\subsection{Proof of Theorem \ref{main-theorem}}\label{ProofOfMainTheorem}
Take a test curve as (\ref{test-curve}) and we consider the weight vector $P$. 
To prove the theorem, it is enough to prove that
$\ell_0(f)(C(t))\le \eta_{J,max}''(f)$ by Proposotion \ref{CurveSelection}.
We first consider  the  case that $P$ is either strictly positive which is most essential.
\subsubsection{Strictly positive case}
We first   assume that $I=\{1,\dots, n\}$  and $P$ is strictly positive.
We divide the situation into  three cases.
\begin{enumerate}
\item[C-1] ${[P]}$ is an inner region. That is, $\overline{[P]}$  has only strictly positive weight vectors in the boundary.
\item[C-2] ${[P]}$ is  a regular boundary region.
\item[C-3] 
$[P]$ is a vanishing boundary region. In this case, we need to consider
the subdivision by Jacobian dual Newton diagram. There are three subcases.
\newline
C-3-1. ${[P]_J}$ is  an inner region.
\newline C-3-2.
${[P]_J}$ is  a regular boundary region.
\newline
C-3-3. ${[P]_J}$ is also a vanishing boundary region.
\end{enumerate}
We need consider the Jacobian dual diagram only in  vanishing boundary regions of $\Gamma^*(f)$. 
\subsubsection{ Cases  C-1 and C-3-1}
We start from the inequality
$\ell_0(P)\le \eta(P)$ and then estimate $\eta(P)$ by the strictly positive vertices.
We use an induction on $\dim\,[P]$ or $\dim\, [P]_J$ in Case C-3-1 to show that $\ell_0(P)\le \eta_{max}(f)$(resp. $\ell_0(P)\le \eta_{J,max}(f)$). 
The induction starts from the case $\dim\,[P]=1$ (respectively $\dim\,[P]_J=1$). 
In this case,  the assertion is obvious.
If $\dim\, [P]=r>1$, we take a line segment $\overline{RS}$
with $R,S\in \partial \overline{[P]}$ (resp. in $\overline{[P]_J}$) passing through $P$ we apply Lemma\,\ref{monotone2}
to get the estimation
\[
\begin{split}
&\ell_0(C(t))\le \eta(P)\le\max\{\eta(R),\eta(S)\}\le \eta_{max}(f),\, R,S\,:\text{admissible}\\
&\ell_0(C(t))\le  \eta(P)\le\max\{\eta(R),\eta(S)\}\le \eta_{J,max}(f), \,S\,:\text{$J$-admissible.}
\end{split}
\]
As $\dim\,[R],\,\dim\,[S]<r$ (resp. $\dim\,[R]_J,\,\dim\,[S]_J<r$), the induction works.
For the other cases, we prepare a simple lemma.
\begin{Lemma}\label{existence-maximal} Assume that  $\dim\,\Gamma(f)=n-1$. Then for any face $\Delta\subset \Gamma(f)$, there exists an $(n-1)$ dimensional face $\Xi$ such that 
 $\Xi\supset \Delta$. Equivalently for any weight $P$, the closure $\overline{[P]}$ contains a strictly positive vertex $Q$.  
 Respectively $\overline{[P]_J}$  contains a strictly positive vertex $Q\in \mathcal V_J^+$. 
 \end{Lemma}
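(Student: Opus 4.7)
The plan is to prove the geometric form first and then translate it into the two dual statements.

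For the geometric statement, I would invoke standard convex polyhedral theory: since $\Gamma_+(f)$ is an $n$-dimensional convex polyhedron in $\mathbb{R}^n$, its boundary $\partial\Gamma_+(f)$ is a pure $(n-1)$-dimensional polytopal complex, so any face of $\Gamma_+(f)$ is contained in at least one $(n-1)$-dimensional face. The non-trivial point is to produce such a containing face which is itself \emph{compact} when $\Delta$ is. Under the hypothesis $\dim\Gamma(f)=n-1$, I would argue that $\Gamma(f)$ is pure $(n-1)$-dimensional as a subcomplex: $\Gamma(f)$ is connected (as the compact part of the boundary of a full-dimensional convex set), and the hypothesis ensures at least one compact $(n-1)$-face exists. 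A compact face contained only in non-compact $(n-1)$-faces would be cut off in $\Gamma(f)$ from the $(n-1)$-dimensional part, contradicting connectivity. Hence every $\Delta\subset\Gamma(f)$ lies in some compact $(n-1)$-face $\Xi\subset\Gamma(f)$.

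For the dual statement in $\Gamma^*(f)$, I would use the standard bijection between faces $\Delta(P)$ of $\Gamma_+(f)$ and equivalence classes $[P]\subset N_+$. Under this bijection, compact $(n-1)$-dimensional faces correspond exactly to strictly positive vertices of $\Gamma^*(f)$, i.e.\ to elements of $\mathcal V^+$. The inclusion $\Xi\supset\Delta(P)$ translates into the partial-order relation $Q\ge P$ where $Q$ is the normalized weight corresponding to $\Xi$, which means $Q\in\overline{[P]}$. Combining this with the geometric statement yields the required strictly positive vertex.

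For the Jacobian refinement, I would apply the same argument to the product $F=f\cdot f_1\cdots f_n$, using the identification (indicated earlier in the paper via the Minkowski-sum description $\Delta(P,F)=\Delta(P,f)+\sum_i\Delta(P,f_i)$) of $\Gamma_J^*(f)$ with the dual Newton diagram $\Gamma^*(F)$. Since $\Gamma_+(F)$ remains $n$-dimensional and inherits $\dim\Gamma(F)=n-1$ from the hypothesis on $f$, the first part produces a strictly positive vertex $Q\in\mathcal V_J^+$ sitting in $\overline{[P]_J}$.

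The main obstacle is the purity claim at the heart of the argument: verifying that, under $\dim\Gamma(f)=n-1$, no compact face of $\Gamma_+(f)$ is contained exclusively in non-compact facets. This rests on connectivity of $\Gamma(f)$ together with the polytopal structure of $\partial\Gamma_+(f)$, and I expect this step to require the most careful handling — the rest of the argument is essentially a translation between the face lattice of $\Gamma_+(f)$ (resp.\ $\Gamma_+(F)$) and its dual cone complex.
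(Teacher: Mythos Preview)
Your proposal is correct and follows essentially the same approach as the paper: invoke the convex-polyhedral structure of $\Gamma_+(f)$ to get a containing facet, then pass to $F=f\,f_1\cdots f_n$ for the Jacobian version. The paper's own proof is a two-line remark that this is ``immediate'' from $\dim\Gamma_+(f)=n$ and $\Gamma(f)$ being the union of compact boundary faces; you have simply unpacked this and honestly flagged the purity step (every compact face lies in a compact facet) as the place needing care, which the paper does not spell out.
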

The assertion is immediate from the assumption that $\dim\,\Ga(f)=n-1$ as $\Ga_+(f)$ is a $n$-dimensional convex polyhedral region and $\Ga(f)$ is the union of 
compact boundary faces.
The assertion  for $\Gamma_J^*(f)$, as we can  use $F=ff_1\cdot f_n$ instead of $f$.

\subsubsection{ Case C-2 and C-3-2} We start again from the inequality
$\ell_0(P)\le \eta(P)$.
Assume that $\overline{[P]}$ (respectively $\overline{[P]_J}$) is a regular boundary region.
We prove that $\eta(P)\le \eta_{max}(f)$ (respectively $\eta(P)\le \eta_{J,max}(f)$) by the induction of  $\dim\,[P]$ (resp. $\dim\,[P]_J$).
The argument is completely same in the case $[P]_J$. 
Take a strictly positive vertex $R$ in $\overline{[P]}\cap\mathcal V^+$ (resp. in $R\in \overline{[P]_J}\cap\mathcal V_J^+$), using Lemma \ref{existence-maximal}. 
Take the segment $\overline{RP}$ and extend it further to the right so that it arrives to a boundary point of the region, say $Q$.
Then $[Q]$ is either an inner region or a regular boundary region.
\nlind
 If $Q$ is strictly positive and $[Q]$ is an inner region, we can apply the argument of Case C-1 or C-3-1 and we consider the estimation in $[Q]$
 by the inductive argument. 
 \nlind
 Similarly if $Q$ is strictly positive and $[Q]$ is a regular boundary region, we apply the inductions assumption, as $\dim\, [P]>\dim\,[Q]$.
 \nlind
So we assume that $Q$ is not strictly positive. 
 Then $Q$ is a non-vanishing  weight vector i.e., $d(Q)=0$.
The normalized 
form of weight vectors on this segment is given as 
$\hat R_s=\hat R+s Q$ with $0\le s<\infty$ and $\hat P=\hat R_{s_0}$ for some $s_0>0$.
Put  $I=I(Q)$ and $m_I(\hat R)=\min\{\hat r_i\,|\, i\in I\}$,
$I':=\{i\,|\, \hat r_i=m_I(\hat R)\}$
and $I_R:=\{j\,|\, \hat r_j=m(\hat R)\}$.
\newline
--If  $I'\cap I_R\ne \emptyset$,  $m(\hat R)=m_I(\hat R)$ and $m(\hat R_s)\equiv \hat r_{i_0}$ for any $s$ and $i_0\in I'$.
Thus
\[
\ell_0(C(t))\le \eta(\hat P)=\eta(\hat R_{s_0})=\eta(\hat R)\le \eta_{max}(f)\,(\text{resp.} \,\le \eta_{J,max}(f)).
\]
--If $I'\cap I_R= \emptyset$, i.e., $m(\hat R)<m_I(\hat R)$,  take $j_0\in I_R$ such that there exists a small positive number $\eps$ and  $m(\hat R_s)=\hat r_{j_0}+s q_{j_0}$ for $s\le \eps$. As
 $m_I(\hat R)>\hat r_{j_0}$ but  $\hat r_{j_0}+s q_{j_0}$ is monotone increasing in $s$, there exists some $s_1$ such that $m_I(\hat R)=\hat r_{j_0}+s_1 q_{j_0}$ and for $s\ge s_1$, $m(\hat R_s)=m_I(\hat R)$.
Thus $\eta(\hat R_s)$ is monotone decreasing for $0\le s\le s_1$ and  constant for $s\ge s_1$. Thus in any case we get 
\[
\ell_0(C(t))\le \eta(P)=\eta(\hat R_{s_0})
\le\eta(\hat R_0)=\eta(\hat R)\le\eta_{max}(f)\, \,(\text{resp.} \,\le \eta_{J,max}(f)).
\]
\subsubsection{Case C-3-3.} 
 This case  requires  careful choice of the line segment for the estimation. For this purpose,
we prepare the following  Proposition \ref{nice-simplex} and Lemma \ref{nice-segment}.
A strictly positive weight vector $P$ is {\em  simplicially positive } (respectively {\em $J$-simplicially positive})
if there 
exist strictly positive linearly independent vertices $P_1,\dots, P_s$  of $\Ga^*(f)$  such that 
$P_i\in \overline{[P]}, i=1,\dots, s$ (resp.  vertices $P_1,\dots, P_s$  of $\Ga_J^*(f)$  such that $P_i\in \overline{[P]_J}, i=1,\dots, s$ )
and $P$ is in the interior of the simplex
$(P_1,\dots, P_s)$.
Here by a simplex $(P_1,\dots, P_s)$,  we mean the simplicial  cone
\[\Cone\,(P_1,\dots, P_s)=\left\{\sum_{i=1}^s \la_i P_i\,|\, \la_i\ge 0\right\}.
\]
Thus a line segment $\overline{PQ}$ is equal to 
the simplex $(P,Q)$.
\begin{Proposition}\label{nice-simplex}
 Let $P$ be a strictly positive weight vector. 
Then there are two possibilities.
\begin{enumerate}
\item $P$ is simplicially positive (respectively  $J$-simplicially positive).
\item
There are linearly independent vertices  $P_1,\dots, P_{q-1},\,q\le \dim\,[P]$ of $\overline{[P]}$ (resp.  linearly independent vertices $P_1,\dots, P_{q-1},\,q\le \dim\,[P]_J$ of $\overline{[P]_J}$)
 and 
a weight vector $P_q$ which is not strictly positive  so that 
$P$ is  in the interior of the simplex $(P_1,\dots, P_{q})$.
\end{enumerate}
\end{Proposition}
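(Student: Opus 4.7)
The plan is to induct on $r:=\dim\,[P]=\dim\,\overline{[P]}$, with $\Gamma_J^*(f)$ replacing $\Gamma^*(f)$ giving the $J$-version in parallel. The base case $r=1$ is immediate: the unique vertex generating the ray $\overline{[P]}$ must be strictly positive, since some positive scalar multiple of it equals the strictly positive $P$; so we land in case (1) with $s=1$.

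For the inductive step I apply Lemma~\ref{existence-maximal} to obtain a strictly positive vertex $R\in \overline{[P]}\cap \mathcal V^+$ (resp.\ $R\in \overline{[P]_J}\cap \mathcal V_J^+$). The key geometric move is to follow the ray $\{P-\mu R\,|\,\mu\ge 0\}$. Since $\overline{[P]}$ is a pointed rational polyhedral cone and $R\in\overline{[P]}\setminus\{0\}$, the direction $-R$ does not lie in $\overline{[P]}$, so this ray must exit $\overline{[P]}$ at some first value $\mu^*>0$. Setting $P^*:=P-\mu^* R$, we have $P^*$ on a proper face $F$ of $\overline{[P]}$ and $P=\mu^* R+P^*$ expressed as a positive combination. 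Moreover $R\notin F$: otherwise $P=\mu^* R+P^*$ would itself lie in the cone $F$, contradicting the fact that $P$ is in the relative interior of $\overline{[P]}$. Writing $F=\overline{[P]}\cap H$ for a supporting linear hyperplane $H$, the condition $R\notin F$ upgrades to $R\notin\mathrm{span}(F)$, which is the ingredient needed for linear independence below.

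Now I apply the inductive hypothesis to $P^*$, whose equivalence class lies in $F$ and therefore has dimension strictly less than $r$. Three cases arise. (i) If $P^*$ is strictly positive and in case (1) with generators $Q_1,\dots,Q_s\in \overline{[P^*]}\subset F$, then $P=\mu^* R+\sum_i b_i Q_i$ places $P$ in the interior of the simplex $(R,Q_1,\dots,Q_s)$ whose generators are all strictly positive vertices, yielding case (1). (ii) If $P^*$ is strictly positive and in case (2) with generators $Q_1,\dots,Q_{s-1},Q_q$, appending $R$ produces case (2) for $P$ with one more strictly positive vertex and the same $Q_q$. (iii) If $P^*$ is not strictly positive, I take $P_q:=P^*$ directly and land in case (2) with $q=2$. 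In every case, $R$ is linearly independent of the previously used generators because those lie in $\mathrm{span}(F)$ while $R$ does not; and the bound $q\le\dim\,[P]$ is preserved since $\dim\,[P^*]\le r-1$.

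The main obstacle, and the reason for using the inward-pointing ray $\{P-\mu R\}$ rather than the seemingly natural forward extension of the segment $\overline{RP}$ past $P$, is that $P-R$ may itself lie in $\overline{[P]}$, in which case the forward extension never exits the cone and no useful boundary point is produced. Following the direction $-R$ from $P$ is always eventually forced out of $\overline{[P]}$ by pointedness, sidestepping this difficulty. The remaining technical point is the linear-independence bookkeeping through the induction, handled uniformly by the exposed-face property $F=\overline{[P]}\cap H$ used above.
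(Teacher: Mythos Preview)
Your proof is correct and follows the same inductive scheme as the paper: choose a strictly positive vertex $R\in\overline{[P]}$ via Lemma~\ref{existence-maximal}, push from $P$ to a point on the relative boundary of $\overline{[P]}$, and recurse on the lower-dimensional face. The only genuine difference is how you reach the boundary. The paper extends the segment $\overline{RP}$ past $P$ (direction $P-R$), while you travel from $P$ in the direction $-R$. Your observation in the last paragraph is a valid technical point: the \emph{literal} affine ray $\{(1+s)P-sR:s\ge 0\}$ can fail to leave the cone when $P-R\in\overline{[P]}$ (e.g.\ $\overline{[P]}=\mathbb R_{\ge 0}^2$, $R=(1,0)$, $P=(2,1)$), whereas pointedness forces $P-\mu R$ to exit. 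The paper's phrasing is meant projectively --- in any affine cross-section the cone becomes a bounded polytope and the extended segment certainly meets its boundary --- and read that way both constructions land on the same boundary face; your formulation just makes this step unambiguous directly in the cone. You also make explicit the linear-independence bookkeeping (via the supporting hyperplane $H$ of the exit face $F$, giving $R\notin\operatorname{span}(F)$), which the paper leaves implicit.
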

\begin{proof}
The assertion follows easily from the fact that $\overline{[P]}$ (resp. $\overline{[P]_J}$)
is a polyhedral convex cone.  We use induction on $r=\dim\,[P]$ (resp. $r=\dim\, [P]_J$).
As the proof is completely parallel,   we show the assertion  in the case of $\Ga^*(f)$.
Take a strictly positive vertex $P_1\in \overline{[P]}$ using Lemma  \ref{existence-maximal}
and take the line segment $\overline{P_1P}$ and extending to the right, put $Q_1$ be the weight on the boundary of $ \overline{[P]}$. 
Thus $P$ is contained in the interior of $\overline{P_1Q_1}$.
Consider $[Q_1]$. Then $\dim\, [Q_1]<r$.
If $Q_1$ is not strictly positive, we stop the operation. Then $q=2$ and this case  corresponds  to Case (2).
 If $Q_1$ is still strictly positive but not a vertex, 
we repeat the argument on $\overline{[Q_1]}$.
Take a strictly positive vertex $P_2\in\overline{[Q_1]}$ and so on.
Apply an inductive argument.
The operation stops if we arrive at  a weight vector which is not strictly positive (then case (2))
or a strictly positive vertex $P_q$  (Case (1)).
\end{proof}
Using this proposition,  we have the following choice of a nice line segment.
\begin{Lemma} \label{nice-segment}
  Assume that $P$ is a strictly positive weight.  If $P$ is not  simplicially positive (respectively
  not $J$-simplicially positive),
 there is a line segment $\overline{RQ}$ such that
$R$  is  simplicially positive (resp. $J$-simplicially positive) and $Q$ is not strictly positive.
\end{Lemma}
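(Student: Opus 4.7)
The plan is to extract the required segment directly from Proposition \ref{nice-simplex}. Since $P$ is strictly positive but not simplicially positive (respectively not $J$-simplicially positive), alternative (2) of that proposition supplies linearly independent vertices $P_1,\dots,P_{q-1}$ of $\Gamma^*(f)$ (resp.\ of $\Gamma_J^*(f)$) lying in $\overline{[P]}$ (resp.\ $\overline{[P]_J}$), all strictly positive, together with one further weight $P_q$ that fails to be strictly positive, and positive scalars $\lambda_1,\dots,\lambda_q>0$ with $P=\sum_{i=1}^q \lambda_i P_i$. Note $q\ge 2$, for otherwise $P$ would be a positive multiple of $P_q$, contradicting its strict positivity.

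The construction is then the obvious one: set
\[
R:=\sum_{i=1}^{q-1}\lambda_i P_i,\qquad Q:=P_q,
\]
and take $\overline{RQ}$ as the desired segment. With $t:=\lambda_q/(1+\lambda_q)\in(0,1)$ one computes $(1-t)R+tQ=P/(1+\lambda_q)$, so $P$ is (up to a positive scalar, which is irrelevant to the equivalence class in $\Gamma^*(f)$) an interior point of $\overline{RQ}$; this is exactly the feature needed for the monotonicity argument in Case C-3-3.

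The substantive step is checking that $R$ is simplicially positive (resp.\ $J$-simplicially positive). The vertices $P_1,\dots,P_{q-1}$ are strictly positive, linearly independent, and vertices of $\Gamma^*(f)$ (resp.\ of $\Gamma_J^*(f)$); since the $\lambda_i$ are positive, $R$ lies in the interior of $\Cone(P_1,\dots,P_{q-1})$. It remains to verify that each $P_i\in\overline{[R]}$ (resp.\ $\overline{[R]_J}$). Because $P_1,\dots,P_q$ all lie in $\overline{[P]}$, they share the face $\Delta(P)\subset\Delta(P_j)$ for every $j$, so they are pairwise admissible; for the positive combination $R=\sum_{i<q}\lambda_i P_i$ this yields $\Delta(R)=\bigcap_{i=1}^{q-1}\Delta(P_i)\subset\Delta(P_i)$, giving the required inclusion. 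The $J$-version is formally identical after replacing $f$ by $F=f\cdot f_1\cdots f_n$, as in the remark following Lemma \ref{existence-maximal}.

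The only point that needs real verification, and it is essentially bookkeeping, is the identity $\Delta(R)=\bigcap_{i<q}\Delta(P_i)$ for pairwise admissible strictly positive weights: a point $\nu\in\Gamma_+(f)$ minimizes the functional $\sum_{i<q}\lambda_i P_i$ precisely when it minimizes each $P_i$ individually, which follows from $\lambda_i>0$ and the existence of the common minimizing face $\Delta(P)\subset\bigcap_i\Delta(P_i)$. Everything else is purely combinatorial.
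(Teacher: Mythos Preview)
Your proof is correct and follows essentially the same route as the paper: both invoke Proposition~\ref{nice-simplex}(2) to write $P=\sum_{i=1}^q\lambda_iP_i$ and then take $R$ to be the positive combination of $P_1,\dots,P_{q-1}$ and $Q$ to be (a multiple of) $P_q$. The paper is terser, simply asserting ``$R\in(P_1,\dots,P_{q-1})$, thus $R$ is simplicially positive,'' whereas you supply the verification $\Delta(R)=\bigcap_{i<q}\Delta(P_i)\subset\Delta(P_i)$ that the paper leaves implicit.
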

\begin{proof} We give the proof for $\Ga^*(f)$ as the proof is completely parallel for $\Ga_J^*(f)$. Assume that $P$ is not simplicially positive.
Using Proposition \ref{nice-simplex}, we suppose that $P$ is in the interior of the simplex $(P_1,\dots, P_q)$
where $P_1,\dots, P_{q-1}$ are strictly positive vertices and 
$P_q$ is not strictly positive. Write $P$ by a barycentric coordinates as
$P=\sum_{i=1}^q \la_i P_i$ with $\la_i>0$ and  we may assume $ \sum_{i=1}^q \la_i=1$.
Put $\la:=\sum_{i=1}^{q-1} \la_i$,  $\mu:=1-\la$ and 
define  $R:=(\sum_{i=1}^{q-1} \la_i P_i)/\la$ and $Q:=P_q/\mu$.
Then $P=\la R+\mu Q$ and  $R\in (P_1,\dots, P_{q-1})$.  Thus $R$ is simplicially positive.
\end{proof}
\begin{Remark} For the proof of Case 3-3 below, the Jacobian dual Newton diagram is
essential. So we use Lemma \ref{nice-segment} for $\Ga_J^*(f)$.
\end{Remark}
{\bf Proof of Case 3-3}.
Now we are ready to have an estimation for the \L ojasiewicz exponent of our test curve $C(t)$. Suppose that $[P]$ and $[P]_J$ are  non-banishing boundary region. We apply Lemma \ref{nice-segment}.
If $P$ is  $J$-simplicially positive, we have the estimation $\ell_0(C(t))\le \eta_{J,max}(f)$
by the same argument as in Case 3-1. Thus using Lemma \ref{nice-segment},  we may assume that  $P$ is in the line segment $\overline{RQ}$ where $R$ is  $J$-simplicially positive and $Q$ is not strictly positive.
 If $Q$ is a non-vanishing weight, we proceed as the case C-3-2
to get the estimation $\ell_0(C(t))\le \eta_{J,max}(f)$.
Thus we assume that $Q$ is a vanishing weight vector and 
$d(Q,f)>0$. Assume that $Q=(q_1,\dots, q_n)$ and put  $I:=\{i\,|\, q_i=0\}$.
We assume $I=\{1,\dots, m\}$ for simplicity.
Note that $\mathbb C^I$ is a vanishing coordinate subspace.
For each $i\in I$, there exists some $j\not\in I$ and a monomial $z_i^{n_{i,j}}z_j$ with a non-zero coefficient, as $f$ has an isolated singularity at the origin.
Put $J_i$ be the set of such $j$ for a fixed $i\in I$ and put $J(I)=\cup_{i\in I}J_i$.
Here $n_{i,j}$ is assumed to be the smallest when $j$ is fixed.
Put $\xi_{I}:=\max\{n_{i,j}\,|\, i\in I,j\in J_i\}$ and $\xi(f)$ be the maximum of $\xi_{I}$ where $I$  corresponds to  a vanishing coordinate subspace.
Put $\eta'_{J,max}(f):=\max\{\eta'_{k,i}(R)\,|\,R\in  \mathcal V^{++},\,1\le k,i\le n\}$
where $\eta_{j,i}'(R)={d(R,f_i)}/{r_j}$. Under the above situation, 
we will prove that  
\begin{eqnarray*}
(\star)\quad \ell_0(C(t))\le\max\{\xi(f),\eta_{J,max}(f),\eta'_{J,max}\}.\
\end{eqnarray*}
Consider the normalized weight vector
$\hat R_s:=(1-s)\hat R+s\hat Q,\,0\le s\le 1$. 
Note that $\hat R_0=\hat R,\,\hat R_1=\hat Q$ and putting 
$\hat R_s=(\hat r_{s,1},\dots,\hat r_{s,n})$,
\[\begin{split}
&\hat r_{s,i}=
\begin{cases} &(1-s)\hat r_i,\quad 1\le i\le m\\
&(1-s)\hat r_{i}+s\hat q_{i},\quad m<i\le n.
\end{cases}\\
\end{split}
\]
 Put  $I'=\{i\in I\,|\, \hat r_i=m(\hat R_I)\}$
 and $J'=\cup_{i\in I'}J_i$.  
 Thus for $i\le m$, the normalized weight $\hat r_{sj}$ goes to $0$, when  $s$  approaches to $ 1$. On the other hand,  for  $j>m$,  $\hat r_{sj}\ge \delta, 0\le \forall s\le 1$ for some $\delta>0$.
 Thus there exists an $\eps,\,1>\eps>0$ so that for
 $1-\eps\le s\le 1$, $m(\hat R_s)$ is taken by $i\in I'$.
 Note that for $j\in  J'$,  there exists a  small enough $\eps_2,\,\eps_2\le \eps_1$ so that  $(f_j)_{\hat R_s}=((f_j)^I)_{\hat R_{s}}$ as 
 $\hat r_{sj}\ge \de $ for $j>m$ for 
  $1-\eps_2\le s\le 1$.  Here $(f_j)^I$ is the restriction of $f_j$ to $\mathbb C^I$ and 
 $(\hat R_{s})_I$ is the $I$ projection of $\hat R_{s}$ to $N_+^I$.
  That is, $(f_j)_{\hat R_s}$  contains only variable $z_1,\dots, z_m$.
By the \L ojasiewicz non-degeneracy, there exists $i_0\in I'$ and $j_0\in J_{i_0}$
 such that 
 \[
 (f_{j_0})_{\hat R_s}(\mathbf a)\ne 0,\quad s\ge1-\eps_2.
 \]
 By the definition of Jacobian dual Newton diagram,  for  any $0<s<1$,
$(f_{j_0})_{\hat R_s}$ does not depend on $s$
and thus $(f_{j_0})_{\hat R_s}(\bfa)\ne 0$ for any $0<s<1$. Then 
$d(\hat R_s,f_{j_0})\le  (1-s)\hat r_{i_0}\times n_{i_0,j_0}$ for any $0<s\le 1$.
The equality takes place if the monomial $z_{i_0}^{n_{i_0,j_0}}z_{j_0}$ is on
the face function $(f_{j_0})_{\hat R_s}(\bfz)$.
Assume  that 
 $\hat P=\hat R_{s_0}$, $0< s_0<1$.
Thus we start from the estimation 
\begin{eqnarray}\ell_0(C(t))\le d(\hat R_{s_0},f_{j_0})/m(\hat R_{s_0}).\end{eqnarray}

 First  for $s\ge 1-\eps_2$,
 we see that 
 \[\begin{split}
 \ell_0(\hat R_s)\le
 \frac{(1-s)\hat r_{i_0}\, n_{i_0,j_0}}{(1-s)\hat r_{i_0}}
=n_{i_0,j_0}.
\end{split}
 \]
($s_0=\mu$ in the proof of  Lemma \ref{nice-segment}.)
 Put $I_R:=\{k\,|\, \hat r_k=m(\hat R)\}$.
 \newline
 (a) If  there exists a $ k\in I'\cap I_R$ i.e. $m(\hat R)=m(\hat R_I)$, we have the estimation 
 $\ell_0(\hat R_s)\le n_{i_0,j_0}$ for any $s$. In particular, $\ell_0(C(t))\le n_{i_0,j_0}$.
 \newline
 (b) Assume that $I'\cap I_R=\emptyset$ i.e. $m(\hat R)<m(\hat R_I)$.
 Choose $k\in I_R$. Then there exists a small number $\eps>0$ so that 
 $m(\hat R_s)=\hat r_{s,k}$ for $0\le s\le \eps$.
 By the definition of $I'$, this implies that $k\not \in I$.
 There exists $0<s_1<1 $ such that $m(\hat R_{s_1})=(1-s_1)\hat r_k+s_1\hat q_k=(1-s_1)\hat r_{i_0}$.
 We divide this case into two subcases.
 \newline
 (b-1) Assume that $(1-s)\hat r_k+s\hat q_k$ is monotone increasing in $s$. Then
 \[\begin{split}
 &\ell_0(P)=\ell_0(\hat R_{s_0})\le \eta(\hat R_{0})=\eta(\hat R)\le \eta_{J,max}(f),\,\,\text{if}\,\,s_0\le s_1.\\
 &\ell_0(P)=\ell_0(\hat R_{s_0})\le \frac{(1-s_0)\hat r_{i_0}n_{i_0,j_0}}{(1-s_0)\hat r_{i_0}}=n_{i_0,j_0},\,\,\text{if}\, s_0>s_1.
 \end{split}\]
 (b-2) Assume that $(1-s)\hat r_k+s\hat q_k$ is monotone decreasing in $s$. Then
 \begin{eqnarray}
\ell_0(\hat R_{s_0})&\le  \frac{(1-s_0)\hat r_{i_0}n_{i_0,j_0}}{(1-s_0)\hat r_k+s_0\hat q_k}\le \frac{\hat r_{i_0}n_{i_0,j_0}}{\hat r_k}\le\eta'_{k,j_0}(R),
\quad &\text{if}\,s_0\le s_1\\
\ell_0(\hat R_{s_0})&\le  \frac{(1-s_0)\hat r_{i_0} n_{i_0,j_0}}{(1-s_0)\hat r_{i_0}}=n_{i_0,j_0},\,\quad& \text{if}\,s_0\ge s_1.
 \end{eqnarray}
 where the first estimation (19) for $s_0\le s_1$ follows from the fact that 
 \[
 s\mapsto \eta_{k,j_0}'(\hat R_{s})=\frac{(1-s)\hat r_{i_0} n_{i_0,j_0}}{(1-s)\hat r_k+s\hat q_k},\quad 0\le s\le 1
 \]
  is monotone decreasing on $s$.
  Then we apply Lemma \ref{monotone2} to get the estimation
  \[
  \ell_0(C(t))\le \eta_{k,j_0}'(R)\le\eta_{J,max}'(f).
  \]
  Thus we have proved the estimation $(\star)$.
  Assuming the next lemma for a moment,  we can ignore the first term $\xi(f)$ in $(\star)$ and  the estimation reduces to
 \[\ell_0(C(t))\le \eta''_{J,max}(f),\quad \text{if}\,\,\mathbf z(t)\in \mathbb C^{*n}.
 \]
 Furthermore if  $\mathcal V_J^{++}=\emptyset$, $\eta_{i,j}'(P)=\eta_{i,j}(P)$ for $P\in \mathcal V^+$,
  as $f_P(\bfz)$ contains all the variables and therefore $\eta_{i,j}'(P)=\eta_{i,j}(P)\le \eta(P)$ or 
  $\eta_{J,max}'(f)\le\eta_{J,max}(f)=\eta_{max}(f)$.
\begin{Lemma}\label{estimation-vanishing}
The following inequality holds.
\[n_{i,j}\le \eta_{max}(f),\,\forall i\in I,\,\forall j\in J_i.\]
\end{Lemma}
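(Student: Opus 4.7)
The plan is to locate the lattice point $B_{ij}$ on the top of the Newton polytope, read off the equation of a supporting hyperplane at a strictly positive vertex, and then bound $n_{ij}$ crudely by replacing coordinates of the normalized weight with $m(\hat P)$.

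First I would verify that $B_{ij}=(0,\dots,\overset{\overset i\smile}{n_{ij}},\dots,\overset{\overset j\smile}{1},\dots,0)$ actually lies in $\Gamma(f)$. The monomial $z_i^{n_{ij}}z_j$ has nonzero coefficient in $f$, so $B_{ij}\in\Gamma_+(f)$. If $B_{ij}$ failed to be on the Newton boundary, some lattice point $\nu\le B_{ij}$ with $\nu\ne B_{ij}$ would lie in the support of $f$. Given the shape of $B_{ij}$, such a $\nu$ would either be of the form $z_i^a z_j$ with $a<n_{ij}$ (contradicting the minimality in the definition of $n_{ij}$) or a monomial in $\mathbb C[\mathbf z_I]$ (contradicting that $\mathbb C^I$ is a vanishing coordinate subspace).

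Next, since $\dim\Gamma(f)=n-1$, Lemma~\ref{existence-maximal} provides a strictly positive vertex $P=(p_1,\dots,p_n)\in\mathcal V^+$ with $B_{ij}\in\Delta(P)$. Passing to the normalized weight $\hat P$, the identity $\hat P(B_{ij})=1$ becomes
\[
n_{ij}\,\hat p_i+\hat p_j=1, \qquad\text{so}\qquad n_{ij}=\frac{1-\hat p_j}{\hat p_i}.
\]
Since $\hat p_j\ge m(\hat P)$ in the numerator and $\hat p_i\ge m(\hat P)$ in the denominator, the right hand side is at most
\[
\frac{1-m(\hat P)}{m(\hat P)}=\eta(P)\le\eta_{max}(f),
\]
which is the desired inequality.

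There is no serious obstacle: the one point requiring slight care is the opening observation that $B_{ij}\in\Gamma(f)$, which simultaneously uses the minimality in the definition of $n_{ij}$ and the vanishing of $\mathbb C^I$. After that, the argument consists only of the supporting hyperplane equation at a strictly positive vertex together with two one-line monotonicity estimates.
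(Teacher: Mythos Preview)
Your argument is correct and coincides with the paper's: both place $B_{ij}$ on an $(n-1)$-dimensional face $\Delta(P)$ for some $P\in\mathcal V^+$, read off $n_{ij}\,\hat p_i+\hat p_j=1$, and conclude $n_{ij}=\eta_{i,j}(P)\le\eta(P)\le\eta_{max}(f)$. The only difference is that the paper simply quotes $B_{ij}\in\Gamma(f)$ from \S\ref{Lojasiewicz non-degeneracy} rather than rejustifying it; your added justification is fine in this situation, though note that the general implication ``not on $\Gamma(f)$ forces a strictly smaller support point $\nu\le B_{ij}$'' fails for arbitrary support points and really relies on the special shape of $B_{ij}$ together with the vanishing of $\mathbb C^I$.
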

\begin{proof}
Let $\Xi$ be a maximal face of $\Ga(f)$  for which the face function $f_\Xi(\bfz)$ contains  the monomial $z_{i}^{n_{i,j}}z_{j}$. 
The corresponding weight vector  $P$ (i.e., $\Delta(P)=\Xi$) is in $\mathcal V^+\subset\Ga^*(f)$ such that 
$\Xi$ is subset of the hyperplane 
\[p_1\nu_1+\dots+p_n \nu_n=d(P)\]
 and  $f_P(\bfz)$ contains the monomial $z_{i}^{n_{i,j}}z_{j}$.
 Thus $n_{i,j}p_i+p_{j}=d(P)$.
Consider an analytic curve $\bfz(t)$ corresponding to the weight $P$. Then we have
\[
n_{i,j}=\frac{d(P)-p_{j}}{p_{i}}=\eta_{i,j}(P)\le \eta(P)\le \eta_{max}(f).
\]
\end{proof}

To complete the proof, we have to consider the  case where the test curve is in a proper coordinate subspace.
\subsubsection{ Test curves in a proper subspace} \label{proper-subspace}
We consider the situation of the test curve $\bfz(t)$ defined in (\ref{test-curve2})
for which $I^c\ne \emptyset$.  Recall that $I=\{i\,|\, z_i(t)\not\equiv 0\}$ and we assume $I=\{1,\dots, m\},\,m<n$ for simplicity.

{\bf Case 1} Assume that $ f^I\not \equiv 0$. Recall that 
$P=(p_1,\dots, p_m)$ and $\bfa=(a_1,\dots, a_m)\in \mathbb C^{*I}$.
Let $\De_1=\De(P)\subset \Ga^*(f^I)$.
Consider the weight vector $\tilde P=(p_{1},\dots,p_m,K,\dots, K)\in N_+$ where $K$ is sufficiently large so that 
$\De(\tilde P,f)=\De_1$, $d:=d(P,f^I)=d(\tilde P,f)$ and $m(\tilde P)=m(P)$.
Put $\tilde {\bfa}=(\bfa,1,\dots,1)$.
Consider  $\theta_k:={\ord}\, {\partial  f}/{\partial k}(\bfz(t))$
and put $\theta:=\min\{\theta_k\,|\, \theta_k\ne \infty\}$.
Here $\theta_k=\infty$ if $ {\partial  f}/{\partial k}(\bfz(t))\equiv 0$ by definition.
Then $\ell_0(C(t))={\theta}/{m(P)}$.
Consider the modified curve
\[
\tilde  C(t): \tilde \bfz(t)=(z_1(t),\dots, z_m(t), t^N,\dots, t^N).
 \]
 Taking $N$ sufficiently large, say $N>\max\{\theta_k\,|\, \theta_k\ne \infty\}$, it is easy to see that 
 ${\ord}\,{\partial  f}/{\partial k}(\tilde \bfz(t))=\theta_k$ for  any $k$
 with $\theta_k\ne \infty$.
 Thus for such an $N$, $\ell_0(C(t))=\ell_0(\tilde C(t))$.
 Combining with  the previous argument,  we get 
 $\ell_0(C(t))\le \eta_{J,max}''(f)$.
 
{\bf Case 2} Assume that $f^I\equiv 0$. This implies $\mathbb C^I$ is a vanishing coordinate subspace.  Thus each monomial in the expansion of $f$ must contain one of
$\{z_j\,|\, j\in I^c\}$.
Thus $f_i(\bfz(t))\equiv 0$ for $i\in I$. 
Put $m(P):=\min\,\{p_i\,|\, i\in I\}$ and $I'=\{i\in I\,|\, p_i=m(P)\}$ and $J_i$ be the set of $j\in I^c$ such that a monomial $z_i^{n_{i,j}}z_j$ exists.
Then  by the \L ojasiewicz non-degeneracy, there exists $i_0\in I'$ and $j_0\in J_{i_0}$ such that  $f_{j_0}(\bfa)\ne 0$ and thus 
\[
\frac{{{\ord}}\,\partial f({\bfz}(t))}{{{\ord}}\,\bfz(t)}\le \max\{n_{i,j}\,|\, i\in I', j\in I_{i}\}\le\eta_{J,max}(f).
\]
This completes the proof of Theorem \ref{main-theorem}.
\begin{Remark}It is possible  to have $\eta_{J,max}=\eta_{\max}$
or $\eta_{J,max}''=\eta_{J,max}$ in some cases.
 For example, see the next section.
In Example \ref{example-Jdual},  we have the equality $\eta_{J,max}=\eta_{max}$ as 
the simplex $(R,e_1,e_2)$ is a regular boundary region. In fact, we have
$\eta(R)=11,  \eta(P)=9$.
\end{Remark}
\subsection{Weighted homogeneous polynomials}
We consider a weighted homogeneous polynomial with isolated  singularity at the origin. There are nice results by 
Abderrhmane \cite{Ab1} and Brzostowski \cite{Br}. I thank to Tadeusz Krasi\'nski for  informing me these papers.
Here we will give a slightly different proof as a special case of Main theorem.
\begin{Theorem}\label{estimation-weighted}
Let $f(\bfz)$ be a non-degenerate, \L ojasiewicz non-degenerate weighted homogeneous polynomial with isolated singularity at the origin
and $\dim\,\Ga(f)=n-1$. Let $R$ be the weight vector of $f$. Then we have the estimation
$\ell_0(f)\le \eta(R)$.
\end{Theorem}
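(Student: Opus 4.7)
The plan is to derive this as a special case of Theorem~\ref{main-theorem}: that theorem yields $\ell_0(f)\le\eta''_{J,\max}(f)$, so it suffices to verify $\eta''_{J,\max}(f)\le\eta(R)$ under the weighted homogeneous hypothesis. I would organize the verification around the three suprema defining $\eta_{\max}$, $\eta_{J,\max}$, and $\eta'_{J,\max}$.

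First I would use weighted homogeneity to pin down $\mathcal V^+$. Every monomial of $f$ lies on the single hyperplane $\hat R\cdot\nu=1$, and the hypothesis $\dim\Gamma(f)=n-1$ forces $\Gamma(f)$ to be exactly the $(n-1)$-dimensional face cut out by this hyperplane. Hence $R$ is the only strictly positive vertex of $\Gamma^*(f)$, so $\mathcal V^+=\{R\}$ and $\eta_{\max}(f)=\eta(R)$. As an immediate consequence, Lemma~\ref{estimation-vanishing} gives $n_{i,j}\le\eta(R)$ for every vanishing-subspace exponent, so the $\xi(f)$ term appearing in the proof of the main theorem is already bounded by $\eta(R)$.

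Next I would bound the Jacobian-level invariants by exploiting that each derivative $f_j$ is itself weighted homogeneous with weight $R$ and degree $d(R,f)-r_j$, so every face $\Delta(P,f_j)$ sits on a hyperplane parallel to $\{\hat R\cdot\nu=1\}$. For $P\in\mathcal V_J^{++}$ I would follow the roadmap of Case C-3-3 in \S\ref{ProofOfMainTheorem}: choose a line segment through $P$ joining a simplicially positive weight (necessarily a multiple of $R$ since $\mathcal V^+=\{R\}$) to a vanishing weight $Q$, then apply Lemma~\ref{monotone2} to $\eta$ and $\eta'_{k,i}$ along that segment. The parallelism of the face-defining hyperplanes propagates the single-vertex bound $\eta(R)$ through the Jacobian refinement. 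Test curves confined to a proper coordinate subspace are handled exactly as in \S\ref{proper-subspace}, either by adding high-order perturbations in the missing directions (when $f^I\not\equiv 0$) or by invoking the \L ojasiewicz non-degeneracy together with $n_{i,j}\le\eta(R)$ (when $f^I\equiv 0$).

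I expect the principal obstacle to be the control of $\eta'_{k,i}(P)$ for $P\in\mathcal V_J^{++}$: unlike $\eta(P)=1/m(\hat P)-1$, which reduces transparently to $\eta(R)$ via the single-hyperplane picture, the Jacobian ratio $d(P,f_i)/p_k$ requires tracking the joint behavior of $d(\hat R_s,f_{j_0})$ and $m(\hat R_s)$ along the segment toward the vanishing weight. The \L ojasiewicz non-degeneracy must be invoked to produce a nonzero leading coefficient of $f_{j_0}$ near the vanishing endpoint, and this must be combined with the weighted-homogeneity constraint on $f_{j_0}$ to force the resulting ratio to satisfy the uniform bound $\eta(R)$. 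Once this is in place, $\eta''_{J,\max}(f)=\eta(R)$ follows and the theorem is immediate.
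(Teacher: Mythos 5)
Your reduction to the inequality $\eta''_{J,max}(f)\le \eta(R)$ is a legitimate strategy in principle, and your first step ($\mathcal V^+=\{R\}$, hence $\eta_{max}(f)=\eta(R)$ and $n_{i,j}\le \eta(R)$ via Lemma \ref{estimation-vanishing}) is correct. The gap is in the second step, which you yourself flag as "the principal obstacle" and then do not close. For a vertex $P\in\mathcal V_J^{++}$ that is not a multiple of $R$, neither $\eta(P)\le\eta(R)$ nor $\eta'_{k,i}(P)\le\eta(R)$ follows from the tools you invoke: Lemma \ref{monotone2} bounds $\eta$ along a segment by the maximum of its values at the two endpoints, and when the far endpoint $Q$ is a vanishing weight one has $m(\hat Q)=0$, i.e.\ $\eta(\hat Q)=+\infty$, so the lemma gives nothing. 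Your claim that "$\eta(P)=1/m(\hat P)-1$ reduces transparently to $\eta(R)$ via the single-hyperplane picture" is not true for an arbitrary strictly positive weight in a vanishing boundary region ($m(\hat P)$ is not controlled by the hyperplane containing $\Gamma(f)$), and the \L ojasiewicz non-degeneracy, which you propose to combine with weighted homogeneity to control $\eta'_{k,i}(P)$, is a condition on face functions along test curves and has no bearing on the purely combinatorial quantity $d(P,f_i)/p_k$. So as written the argument conflates the curve-level estimate of Case C-3-3 with a bound on the combinatorial invariants that your reduction actually requires.

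The missing observation that would rescue your route is that $F=f f_1\cdots f_n$ is again weighted homogeneous for $R$ (each $f_i$ has $R$-degree $d(R)-r_i$), so $\Gamma(F)$ lies in a single hyperplane with normal $R$ and has dimension $n-1$; hence the only strictly positive vertex of $\Gamma_J^*(f)=\Gamma^*(F)$ is $R$ itself, $\mathcal V_J^{++}\subseteq\{R\}$, and then $\eta''_{J,max}(f)=\eta(R)$ follows from the one-line computation $\eta'_{k,i}(R)=(d(R)-r_i)/r_k\le (d(R)-m(R))/m(R)=\eta(R)$. With that addition your proof would in fact be shorter than the paper's. The paper takes a different route: it does not compute $\eta''_{J,max}$ at all, but reruns the curve-by-curve argument, using the key fact that $\Delta(\hat R,f_j)=\Gamma(f_j)$ for every $j$ (so $R$ is $J$-admissible with every weight and the segment $\overline{RQ}$ of Lemma \ref{nice-segment} requires no Jacobian subdivision), and in the critical subcase identifies $\eta'_{k,i_0}(\hat R)=\eta_{k,i_0}(\hat R)\le\eta(\hat R)$ directly on that segment.
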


\begin{proof}
Let $\hat R=(\hat r_1,\dots,\hat r_n)$ be the normalized weight of $f$.
Put $I_R:=\{i\,|\, \hat r_i=m(\hat R)\}$.
We work in $\Ga^*(f)$. 
Consider a strictly positive integral  weight vector $P$
and consider the test curve (\ref{test-curve2}) with $\bfz(t)\in \mathbb C^{*n}$.
So ignoring the higher terms, we may assume 
$C_{P,\bfa}: \bfz(t)=(a_1t^{p_1},\dots, a_nt^{p_n})$ with 
$\bfa=(a_1,\dots, a_n)\in \mathbb C^{*n}$ as before. 
 We may assume that $P\not\in [R]$.
There is a  line segment the $\overline{SQ}$ guaranteed by Lemma \ref{nice-segment} so that $P\in\overline{SQ}$
where $S$ is  simplicially positive and $Q$ is not strictly positive.  
As $\mathcal V^+=\{R\}$, $\Ga(f)$ has only one face, $S=R$  and $Q$ is not strictly positive.
Put $I=I(Q)$ and assume that $I=\{1,\dots, m\}$ as before. If $f^I\ne 0$, we 
know that $\ell_0(C(t))\le \eta(R)$. Thus we assume that $\mathbb C^{I}$ is a vanishing coordinate subspace. For each $i\in I$, there exists a monomial $z_i^{n_{i,j}}z_j$ wit $j\not\in I$.
Put $J_i$ be the set of such $j$ for a fixed $i\in I$ and put $J=\cup_{i\in I}J_i$.
Consider the normalized weight vector
$\hat R_s:=(1-s)\hat R+s\hat Q,\,0\le s\le 1$. Assume that $\hat P=\hat R_{s_0},\,0< s_0<1$ as before.
Note that 
\[\begin{split}
&\hat R_s=(\hat r_{s,1},\dots,\hat r_{s,n}),\, \hat R_0=\hat R,\,\hat R_1=\hat Q\\
&\hat r_{s,i}=
\begin{cases} &(1-s)\hat r_i,\quad 1\le i\le m\\
&(1-s)\hat r_{i}+s\hat q_{i},\quad m<i\le n.
\end{cases}\\
\end{split}
\]
 Thus for $i\le m$, the normalized weight $\hat r_{sj}$ goes to $0$, as $s\to 1$, while for  $j>m$,  $\hat r_{sj}\ge \delta>0\,(0\le \forall s\le 1)$ for some $\delta>0$.
 Put $I':=\{i\in I\,|\, \hat r_i=m(\hat R_I)\}$.
 There exists a positive number $\eps$ such that 
 $(f_j)_{\hat R_s}=(f_j^I)_{(\hat R_s)_I}$ and it contains only variables $z_1,\dots, z_m$ for $1>s\ge1-\eps$.  By the \L ojasiewicz non-degeneracy, there exists $i_0\in I'$ and $j_0\in J_{i_0}$ so that 
 $(f_{j_0})_{\hat R_s}(\bfa)=0$ for $1>s\ge 1-\eps$.
 Note that for any $j$,  $\De(\hat R,f_{j})=\Ga(f_{j})$, as $f_{j}$ is also weighted homogeneous of degree $d(R)-r_j$ under the weight $R$.
 Thus  $\De(\hat R, f_{j})\supset \De(\hat Q,(f_{j}))\cap\Ga(f)$. In particular, we have
 \[\begin{split}
 \De(\hat R_s,f_{j})&=\De(\hat R,f_{j})\cap \De(\hat Q,f_{j})\cap \Ga(f)\\
 &=\De(\hat Q,f_{j})\cap \Ga(f)\subset \De(\hat R,f_j),\, \forall j.
 \end{split}\]
 Thus $R, Q$ are $J$-admissible and $\Ga^*(f)=\Ga_J^*(f)$.
 
 First asssume that  $I'\cap I_R\ne\emptyset$.
 Then we have $m(\hat R_s)=(1-s) \hat r_{i_0}$
 for any $s$ and $ \ell_0(P)\le \eta_{i_0,j_0}'(P)\le n_{i_0,j_0}$.
 
 Next we assume that $I'\cap I_R=\emptyset$.
 Then $m(\hat R_s)=\hat r_{k,s}$ for some $k\in I_R$ and small $s, 0\le s\le\exists \eps'$.
  Then there exists $s_1$ such that $m(\hat R_{s_1})=(1-s_1)\hat r_{k}+s_1\hat q_{k}=(1-s_1)\hat r_{i_0}$.
 We divide this case into two subcases as before.
 \newline
 (b-1) Assume that $(1-s)\hat r_{k}+s\hat q_{k}$ is monotone increasing in $0\le s\le 1$. Then
 \[\begin{split}
 &\ell_0(P)=\ell_0(\hat R_{s_0})\le \eta(\hat R_{0})=\eta(\hat R),\,\,\text{if}\,\,s_0\le s_1\\
 &\ell_0(P)=\ell_0(\hat R_{s_0})\le \frac{(1-s_0)\hat r_{i_0}n_{i_0,j_0}}{(1-s_0)\hat r_{i_0}}=n_{i_0,j_0},\,\,\text{if}\, s_0>s_1.
 \end{split}\]
 (b-2) Assume that $(1-s)\hat r_{k}+s\hat q_{k}$ is monotone decreasing in $s$. Then
 \begin{eqnarray*}
\ell_0(\hat R_{s_0})&\le  \frac{(1-s_0)\hat r_{i_0}n_{i_0,j_0}}{(1-s_0)\hat r_{k}+s_0\hat q_{k}}
&\le \frac{\hat r_{i_0}n_{i_0,j_0}}{\hat r_{k}}\\
&\le \eta'_{{k},i_0}(\hat R)&=\eta_{k,i_0}(\hat R)\le \eta(\hat R),
\quad \text{if}\,s_0\le s_1\\
\ell_0(\hat R_{s_0})&\le \frac{(1-s_0)\hat r_{i_0} n_{i_0,j_0}}{(1-s_0)\hat r_{i_0}}&=n_{i_0,j_0},\,\quad \text{if}\,s_0\ge s_1.
 \end{eqnarray*}
 where the first estimation for $s_0\le s_1$ follows from the fact that 
 \[
 s\mapsto \frac{(1-s)\hat r_i n_{i,j_0}}{(1-s)\hat r_{k}+s\hat q_{k}},\quad 0\le s\le 1
 \]
  is monotone decreasing on $s$. Thus in any case, $\ell_{0}(C(t))\le \eta(\hat R)$.
  For $P$ which is not strictly positive, the proof is the exactly  same as holomorphic case.
\end{proof}
\begin{Remark}
For a non-degenerate weighted homogeneous polynomial $f(\bfz)$ with isolated singularity at the origin,
it might also be possible  that $f$ satisfies the \L ojasiewicz non-degeneracy
for all or almost all such polynomials. We leave the further discussion to the readers.
For the following typical  simplicial weighted homogeneous polynomials, \L ojasiewicz non-degeneracy is checked directly.
\begin{eqnarray*}
f_B(\bfz)&=&z_1^{a_1}+\dots+z_{n-1}^{a_{n-1}}+z_n^{a_n}\\
f_I(\bfz)&=&z_1^{a_1}z_2+\dots+z_{n-1}^{a_{n-1}}z_n+z_n^{a_n}\\
f_{II}(\bfz)&=&z_1^{a_1}z_2+\dots+z_{n-1}^{a_{n-1}}z_n+z_n^{a_n}z_1.\\
\end{eqnarray*}
These polynomials appear in the classification of weighted homogeneous polynomials
 by Orlik-Wagreich \cite{OW}.
The corresponding mixed polynomials are studied in \cite{Inaba} in the problem of isotopy construction to holomorphic links.
\end{Remark}
\subsection{When the equality  $\ell_0(f)=\eta_{max}(f)$ holds?}
Suppose that 
$f$ is a  non-degenerate, \L ojasiewicz non-degenerate weighted homogenous polynomial with isolated singularity at the origin
and let $R=( r_1,\dots, r_n)$ be the integral weight vector.
Assume $r_i\le  r_{i+1}$ for $i=1,\dots, n-1$ for simplicity.
Assume also that  $ r_1=\dots= r_k<r_{k+1}$.
 If 
there exists an $i_0\le k$ such that $a_{i_0}\ne 0$ and there exists a polar curve
$\bfz=\bfz(t)$ such that 
\[
\Gamma:\begin{cases}&z_i(t)=a_it^{r_i},\,i=1\dots, n\\
&\frac{\partial f}{\partial z_j}(\bfa)=0,\,\forall j\ne i_0.
\end{cases}
\]
Then ${\ord}\, \bfz(t)=p_{1}=m(R)$ and ${\ord}\,\partial f(\bfz(t))=d-p_1$.
Thus we have $\ell_0(C(t))=\ell_0(f)=d/p_1-1$. For $n=3$,  there is a affirmative result
 by \cite{Krasinski}.
 The following example shows that in general, we can not take such a polar curve with
  $\bfa\in \mathbb C^{*n}$.
\begin{Example}
1. Consider $f(\bfz)=z_1^2z_2+z_2^3z_3+z_3^4z_1+z_4^2$.
Then $f$ is a weighted homogeneous  polynomial and $\mathcal V^+$ has a single vertex
$P=(9/25,7/25,4/25,1/2)$.  We see that $\ell_0(f)=\eta_{max}(f)=21/4$.
This number  is taken  on the  face of $f^I$, $I=\{1,2,3\}$ by the family
\[=\root 4\of {\frac 34}it^9,\,z_2(t)=\root 4\of{\frac 1{12}}i t^7,
\, z_3(t)= -t^4,\, z_4(t)\equiv 0.
\]
and ${\ord}\, \bfz(t)=4,\,{\ord}\,\partial f(\bfz(t))=21$. 
As this example shows, the maximal \L ojasiewicz number $\ell_0(C(t))$ need not to be taken on a vertex $R\in \mathcal V^+$.
Consider $g(\mathbf z')=z_1^2z_2+z_2^3z_3+z_3^4z_1$. Then $f$ is a join of two functions
$f(\mathbf z)=g(\mathbf z')+z_4^2$ and $\ell_0(f)=\ell_0(g)$ by Join Theorem below.
\end{Example}
\subsection{\L ojasiewicz Join Theorem}
Consider a join type function $f(\bfz,\bfw)=g(\bfz)+h(\bfw)$ where $\bfz\in \mathbb C^n$ and $\bfw\in \mathbb C^m$. 
Assume that 
both $g(\bfz)$ and $h(\bfw)$ have 
isolated singularities at the respective origin.
Then
\begin{Lojasiewicz Join Theorem} \label{Lojasiewicz Join Theorem}\rm{(\cite{Te}, Corollary 2, \S 2)}
We have the equality:
\[\ell_0(f)=\max\{\ell_0(g),\ell_0(h)\}.\]
\end{Lojasiewicz Join Theorem}
\begin{proof}
Put $\bfu=(\bfz,\bfw)\in \mathbb C^{n+m}$.
Take any analytic curve $C(t): \bfu(t)=(\bfz(t),\bfw(t)),\,0\le t\le 1$.

Case 1. If $\bfz(t)\equiv 0$ (respectively  $\bfw(t)\equiv 0$), 
$\frac{{\ord}\, \partial f(\bfu(t))}{{\ord}\,\bfu(t)}\le \ell_0(h)$ (resp. $\le\ell_0(g)$).

Case 2. Assume that $\bfz(t)\not\equiv 0$ and  $\bfw(t)\not\equiv 0$.
If ${\ord}\,\bfz(t)\le {\ord}\,\bfw(t)$, we have
\[\begin{split}
\frac{{\ord}\,\partial f(\bfu(t))}{{\ord}\,\bfu(t)}&=\min\left \{\frac{{\ord}\partial g(\bfz(t))}{{\ord}\,\bfz(t)},
\frac{{\ord}\partial h(\bfw(t))}{{\ord}\,\bfz(t)}\right\}\\
&\le \min\left\{\ell_0(g),\frac{\ell_0(h){\ord}\,\bfw(t)}{{\ord}\,\bfz(t)}\right\}\\
&\le \ell_0(g).
\end{split}
\]
If ${\ord}\,\bfz(t)>{\ord}\,\bfw(t)$, by  the same argument,
\[\begin{split}
\frac{{\ord}\,\partial f(\bfu(t))}{{\ord}\,\bfu(t)}
&\le \ell_0(h).
\end{split}
\]
Thus  we have $\ell_0(f)\le \max\{\ell_0(g),\ell_0(h)\}$.
The equality can be taken by a curve $\bfu(t)=(\bfz(t),{\bf 0})$ or $\bfu(t)=(0,\bfw(t))$
which takes $\ell_0(g)$ for $g(\bfz)$
or $\ell_0(h)$ for $h(\bfw)$ respectively.
\end{proof}
\begin{Remark}
As we see in the proof of Theorem \ref{main-theorem}, it is not necessary to take the Jacobian dual Newton diagram everywhere.
We only need consider $\Gamma_{J}^*(f)$ in the vanishing regions of $\Gamma^*(f)$. Namely if $P$ is a vertex of $\Ga_J^*(f)$ which is in an  inner or a regular boundary region of $\Ga^*(f)$, we have an estimation
$\eta(P)\le \eta_{max}(f)$.
\end{Remark}
\subsection{Application}
We consider the hypersurface $V=f\inv(0)$ and the transversality problem with the sphere
$S_r:=\{\bfz\,|\, \|\bfz\|=r\}$. Certainly transversality  has been shown by Milnor\cite{Milnor}. However   we want to see this property from a slightly different view
point.
Recall first that 
$S_r$ and $V$ does not  intersect transversely at $\bfz=\bfa$ if and only if
\newline
(i) 
$\bfa$ and $\overline{ \partial f(\bfa)}$ are linear dependent over $\mathbb C$, or
\nl
Using hermitian inner product and the Schwartz inequality, this condition is equivalent to
\newline\noindent
(ii) $ |(\bfa,\overline{ \partial f(\bfa)})_{norm}|=1.$

\subsubsection{Orthogonality at the limits (Whitney (b)-regularity)}

\begin{Lemma}\label{tangent-cone}
 Assume that $f$ is a non-degenerate and \L ojasiewicz non-degenerate  holomorphic  function with an isolated singularity at  the origin.
Consider a non-constant analytic curve $\bfz(t)$ with $\bfz(0)={\bf 0}$
defined as (\ref{test-curve}) and asssume that $f_P(\bfa)=0$.  Then we have
\begin{enumerate}
\item
$\lim_{t\to 0} (\bfz(t),\overline{\partial f(\bfz(t)})_{norm}=0$.
Geometrically this implies the limit direction of $\bfz(t)$ is contained in
the limit of  the tangent space $T_{{\bfz}(t) }V$ of  $V$. 
\item 
In particular, there exists a positive number $r_0$ so that the sphere $S_r$ intersects $V$ transversely for any $r\le r_0$.
\end{enumerate}
\end{Lemma}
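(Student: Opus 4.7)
My plan is to track orders in $t$ along the test curve and let Euler's identity handle the borderline cancellation. With $z_i(t)=a_it^{p_i}+\cdots$ for $i\in I$ and $f_i(\bfz(t))=(f_i)_P(\bfa)\,t^{d(P,f_i)}+\cdots$, the Hermitian pairing
\[
(\bfz(t),\overline{\partial f(\bfz(t))})=\sum_{i\in I} z_i(t)\,f_i(\bfz(t))
\]
has each summand of order at least $p_i+d(P,f_i)\ge d:=d(P,f)$, with equality exactly when $z_i\in Vari(P)$; for those $i$, $(f_i)_P=(f_P)_i$, so the coefficient at order $t^d$ is $\sum_i a_i(f_P)_i(\bfa)$. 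Meanwhile $\|\bfz(t)\|\sim t^{m(P)}$, and non-degeneracy of $f_P$ on $\mathbb C^{*I}$ supplies at least one index with $(f_P)_j(\bfa)\ne 0$; letting $j_1$ realise the largest $p_j$ among such indices, $\|\partial f(\bfz(t))\|$ has order exactly $q=d-p_{j_1}$.

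The normalized inner product therefore has order at least $d-m(P)-q=p_{j_1}-m(P)\ge 0$. If $p_{j_1}>m(P)$, the exponent is strictly positive and the ratio tends to $0$. If $p_{j_1}=m(P)$, then every $j$ with $(f_P)_j(\bfa)\ne 0$ satisfies $p_j=m(P)$, so Euler's identity for the weighted homogeneous polynomial $f_P$ of $P$-degree $d$ gives
\[
0 = d\cdot f_P(\bfa) = \sum_j p_j a_j(f_P)_j(\bfa) = m(P)\sum_j a_j(f_P)_j(\bfa),
\]
which forces the leading coefficient of the pairing to vanish, so the ratio again has strictly positive order. Two degenerate situations remain: if $I\subsetneq\{1,\dots,n\}$ but $f^I\not\equiv 0$, I run the same argument with $f^I$ in place of $f$ and note that the missing $j\notin I$ components of $\partial f$ only increase $\|\partial f(\bfz(t))\|$ while contributing nothing to the numerator; if $\mathbb C^I$ is a vanishing coordinate subspace, every $f_i$ with $i\in I$ vanishes identically on $\mathbb C^I$, so the numerator is identically zero along $\bfz(t)$, while \L ojasiewicz non-degeneracy keeps $\|\partial f(\bfz(t))\|$ positive and the conclusion is automatic.

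For part (2), I argue by contradiction via the Curve Selection Lemma. If $S_r$ failed to meet $V$ transversely for a sequence $r_k\to 0$, the semianalytic set $\{\bfa\in V:|(\bfa,\overline{\partial f(\bfa)})_{\rm norm}|=1\}$ would accumulate at the origin, producing an analytic arc $\bfz(t)\in V$ of the form (\ref{test-curve}) on which the normalized pairing equals $1$ identically. From $f(\bfz(t))\equiv 0$, the leading coefficient $f_P(\bfa)$ in the $t$-expansion of $f$ must vanish, so part (1) applies and drives the pairing to $0$, contradicting its constant value $1$. The main obstacle is the precise identification of the leading coefficient of $\sum z_i f_i$ together with the case split around $p_{j_1}$ versus $m(P)$, which is exactly where Euler's identity and the hypothesis $f_P(\bfa)=0$ combine to produce the required cancellation; everything else is bookkeeping.
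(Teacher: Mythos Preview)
Your argument is correct and follows the same line as the paper's: an order comparison of the pairing $\sum_i z_if_i$ against $\|\bfz\|\cdot\|\partial f\|$, with Euler's identity for the face function $f_P$ handling the borderline case, and the vanishing--subspace case disposed of via the orthogonality $\bfz(t)\in\mathbb C^I$, $\partial f(\bfz(t))\in\mathbb C^{I^c}$; the paper splits on $q<d-m(P)$ versus $q=d-m(P)$, which is essentially your split on $p_{j_1}>m(P)$ versus $p_{j_1}=m(P)$. One harmless slip: your equality $q=d-p_{j_1}$ is only established as $q\le d-p_{j_1}$ (a component $f_j$ with $p_j>p_{j_1}$ and $(f_P)_j(\bfa)=0$ could in principle have order strictly between $d-p_j$ and $d-p_{j_1}$), but your two cases only use the inequality, so nothing is lost.
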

\begin{proof}
Let $J=\{j\,|\, z_j(t)\equiv 0\}$ and let $I$ be the complement of $J$.
We assume that $I=\{1,\dots,m\}$.  Consider the Taylor expansion
\[
z_i(t)=a_it^{p_i}+\text{(higher terms)},\quad i\in I.
\]
Put $P=(p_i)_{i\in I}\in N_+^I$ as before.

Case 1. Assume that $f^I\not\equiv 0$ and assume for simplicity
$p_1=\dots=p_{\ell}<p_{\ell+1}\le\dots\le p_m$ and put $d=d(P,f^I)$.
Note that ${\ord}\,\bfz(t)=p_1$. Put $q={\ord}\, \partial f(\bfz(t))$.
By the assumption, $\lim_{t\to 0} t^{-p_1}\bfz(t)=\bfa_\infty$ where 
$\bfa_\infty:=(a_1,\dots, a_\ell,0,\dots,0)$.
Put $\bfv_\infty:=\lim_{t\to 0}t^{-q}\partial f(\bfz(t))$.
By the non-degeneracy, we have  $ q\le d-p_1$.
If $q<d-p_1$, the assertion (1)
is immediate, as $\bfv_\infty\in \mathbb C^K$ where $K=\{j\,|\,j>\ell\}$.
Assume that $q=d-p_1$. 
This implies
\[\frac{\partial f^I}{\partial z_j}(\bfa)=0,\quad j\ge \ell+1.
\]
By the assumption $f_P^I(\bfa)=0$ and 
 the Euler equality of $f_P^I(\bfz)$, 
we get 
\[
f_P^I(\bfa)=\sum_{i=1}^m p_i a_i\frac{\partial f_P^I}{\partial z_i}(\bfa)=
p_1\sum_{i=1}^\ell  a_i\frac{\partial f_P^I}{\partial z_i}(\bfa)=0.
\]
In this case, note that $\bfv_\infty=(\frac{\partial f_P^I}{\partial z_1}(\bfa),\dots, \frac{\partial f_P^I}{\partial z_\ell}(\bfa),0,\dots,0)$.
This implies also 
\[\begin{split}
&\lim_{t\to 0}(\bfz(t),\overline{\partial f}(\bfz(t)))_{norm}=({\bfa}_{\infty},
\bfv_\infty)_{norm}\\
&=c\sum_{i=1}^\ell a_i\frac{\partial f_P^I}{\partial z_i}(\bfa)=0
\end{split}
\]
where $c$ is a non-zero scalar.

Case 2.  Assume that $f^I\equiv 0$. Then $\bfz(t)\in \mathbb C^I$
and $\mathbb C^I$ is a vanishing coordinate subspace, and thus 
$\partial f(\bfz(t))\in \mathbb C^{I^c}$. Thus the assertion is obvious. The assertion (2) follows from (1).
Of course, (2) is nothing but the existence of  a stable radius which is well known by \cite{Milnor} for a general holomorphic function with an islated singularity at the origin.
\end{proof}
\begin{Remark}
The assertion of the lemma says that  the stratification $\{V\setminus\{\mathbf 0\}, \{\mathbf 0\}\}$ is a Whitney $b$-regular stratification.
\end{Remark}
\subsubsection{Making $f$ convenient without changing the topology}
Let $f$ be a non-degenerate, \L ojasiewicz non-degenerate function with isolated singularity at the origin. Choose  integer $N_i$ with
\begin{eqnarray}\label{make-convenience}
 N_i> \eta_{J,max}''(f)+1,\quad i=1,\dots,n
 \end{eqnarray}
 and consider  a polynomial 
\[R(\bfz)=c_1z_1^{N_1}+\cdots+ c_nz_n^{N_n}.
\]
Consider the family of functions:
\[
f_s(\bfz)=f(\bfz)+s R(\bfz),\quad 0\le s\le 1.
\]
The coefficients are chosen generically so that $f_s$ is non-degenerate.
Note that $f_0=f$ and $f_1$ is a convenient and non-degenerate 
\begin{Theorem}\label{making-convenience}
Consider the family of  hypersurface $V_s:=f_s\inv(0),\,0\le s\le 1$.
There exists a positive number $r_0$ such that
$V_s\cap B_{r_0}$ has a unique singular point at the origin  and  for any $r\le r_0$
 the sphere $S_r$ and $V_s$ intersect transversely for any 
$0\le s\le 1$. In particular, the links of $f$ and $f_1$ are isotopic and their Milnor fibrations are isomorphic.
\end{Theorem}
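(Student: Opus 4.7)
The plan is to verify that the high-order perturbation $sR$ leaves the local geometry near the origin essentially unchanged. First I would establish a uniform \L ojasiewicz-type lower bound: by Theorem~\ref{main-theorem}, there exists $c>0$ with $\|\partial f(\bfz)\|\ge c\|\bfz\|^{\ell_0(f)}$ on some ball $B_{r_1}$. Since each component of $\partial R$ consists of monomials $N_ic_iz_i^{N_i-1}$ with exponent $N_i-1>\eta''_{J,max}(f)\ge \ell_0(f)$, the triangle inequality yields
\[
\|\partial f_s(\bfz)\|\ \ge\ c\|\bfz\|^{\ell_0(f)}-sC\|\bfz\|^{m}\ \ge\ \tfrac{c}{2}\|\bfz\|^{\ell_0(f)},\qquad m=\min_i N_i-1,
\]
uniformly for $s\in[0,1]$ and $\bfz$ in some smaller ball $B_{r_0}$. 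Hence each $f_s$ has the origin as its only singular point in $B_{r_0}$, uniformly in $s$.

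Next I would establish uniform transversality of $V_s$ with each sphere $S_r$ for $r\le r_0$ and $s\in[0,1]$. The key combinatorial observation is that the choice $N_i-1>\eta''_{J,max}(f)$ places each perturbing monomial $z_i^{N_i}$ strictly above every face of $\Gamma(f)$ that controls the \L ojasiewicz bound: if $P\in\mathcal V^+\cup\mathcal V_J^{++}$ then $d(P,f)<N_ip_i$. Consequently, for any test curve $\bfz(t)$ parametrized as in~(\ref{test-curve2}), the face functions $f_P$ and $(f_j)_P$ appearing in Lemma~\ref{tangent-cone} agree with those of $f_s$, and the \L ojasiewicz non-degeneracy of $f$ transfers to $f_s$ uniformly in $s$. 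In particular, Lemma~\ref{tangent-cone}(1) holds verbatim for the family $f_s$ with constants independent of $s$.

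With these ingredients in hand, I would run a parameterized curve-selection argument. Consider the closed real-analytic set
\[
W=\{(\bfz,s)\in (B_{r_0}\setminus\{0\})\times[0,1]\,:\,f_s(\bfz)=0,\ |(\bfz,\overline{\partial f_s(\bfz)})_{norm}|=1\}.
\]
If the origin belonged to the closure of the projection of $W$ to $\bfz$, the curve selection lemma would produce an analytic arc $(\bfz(t),s(t))\to(0,s_0)$ in $W$; along it, $\bfz(t)$ and $\overline{\partial f_{s(t)}(\bfz(t))}$ are $\mathbb{C}$-proportional, while $f_{s(t)}(\bfz(t))\equiv 0$ forces the relevant face function to vanish on the leading coefficient vector. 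The uniform analogue of Lemma~\ref{tangent-cone} then yields $\lim_{t\to 0}(\bfz(t),\overline{\partial f_{s(t)}(\bfz(t))})_{norm}=0$, contradicting the proportionality. Shrinking $r_0$ if needed, this gives uniform transversality on $B_{r_0}\times[0,1]$.

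Finally, uniform sphere transversality together with the Thom--Mather first isotopy lemma produces a locally trivial family of links $K_s=V_s\cap S_{r_0}$ and of Milnor fibrations $f_s/|f_s|\colon S_{r_0}\setminus K_s\to S^1$ over $s\in[0,1]$, so the links are isotopic and the fibrations are isomorphic. The hard part is the second step: one must verify carefully that the bound $N_i-1>\eta''_{J,max}(f)$ is sharp enough to prevent any new monomial $z_i^{N_i}$ from entering a face function that governs the \L ojasiewicz estimate, and that the \L ojasiewicz non-degeneracy of $f$ does transfer to $f_s$ along the arc produced by curve selection. This combinatorial verification, running through all the cases of Proposition~\ref{nice-simplex} and Lemma~\ref{nice-segment} but now for the perturbed Newton diagram $\Gamma_J^*(f_s)$, is the technical heart of the argument.
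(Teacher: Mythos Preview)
Your overall architecture---uniform \L ojasiewicz bound, curve selection on the non-transversality locus, contradiction via the orthogonality Lemma~\ref{tangent-cone}, then isotopy---matches the paper exactly, and your Step~1 and Step~4 are essentially the same as what the paper does. The difference is in what you call ``the technical heart.''

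You propose to verify that the face functions of $f_s$ agree with those of $f$ on all the relevant strata, transfer \L ojasiewicz non-degeneracy to $f_s$, and then re-run the case analysis of Proposition~\ref{nice-simplex} and Lemma~\ref{nice-segment} for the perturbed diagram $\Gamma_J^*(f_s)$. The paper avoids all of this. Its single observation is that along \emph{any} arc $(\bfz(t),s(t))$ produced by curve selection one has
\[
\ord\, s(t)\,\partial R(\bfz(t))\ \ge\ (N-1)\,m(P)\ >\ \eta''_{J,\max}(f)\,m(P)\ \ge\ \ord\,\partial f(\bfz(t)),
\]
the last inequality being exactly the content of Theorem~\ref{main-theorem}. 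Hence $\partial f_{s(t)}(\bfz(t))$ and $\partial f(\bfz(t))$ have the \emph{same} normalized limit, and one applies Lemma~\ref{tangent-cone} to $f$ itself, not to $f_s$. No comparison of Newton diagrams, no transfer of \L ojasiewicz non-degeneracy, no case analysis for $\Gamma_J^*(f_s)$ is needed. (The hypothesis $f_P(\bfa)=0$ of Lemma~\ref{tangent-cone} is recovered from $f_{s(t)}(\bfz(t))\equiv 0$ together with the same order comparison applied to $R$ rather than $\partial R$.)

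Your route could presumably be pushed through, but note a genuine obstacle: in Case~C-3-3 the quantity $\eta(P)=d(P,f)/m(P)-1$ is \emph{unbounded} as $P$ approaches a vanishing vertex, so one can have $N_i\hat p_i<1$ for $i\in I(Q)$ and the monomial $z_i^{N_i}$ really does enter the face $(f_s)_P$. Thus your claim that ``the face functions $f_P$ and $(f_j)_P$ \dots\ agree with those of $f_s$'' is false in general in this region; you would have to argue instead with the altered face function of $f_s$, which is more work than the paper's direct order comparison.
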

\begin{proof} 
Take $r_0$ so that there exists a positive number $c$ and the following inequality is satisfied.
\[
\|\partial f(\bfz)\|\ge c \|\bfz\|^{\ell_0(f)},\,\,0<\| \bfz\|\le r_0.
\]
Assume that the assertion does not hold. Then we can find an analytic curve
$(\bfz(t),s(t)),\, 0\le t\le 1$ and Laurent series $\la(t)$ such that
$\bfz(0)={\bf 0}$ and $s(0)=s_0\in [0,1]$ and 
\begin{eqnarray}\label{transverse}
\overline{\partial f_{s(t)}(\bfz(t))}=\la(t)\bfz(t),\quad 
f_{s(t)}(\bfz(t))\equiv 0.
\end{eqnarray}
Expand $\bfz(t) $ and $s(t)$ in Taylor expansions
\[\begin{split}
z_i(t)&=a_it^{p_i}+\text{(higher terms)},\, i=1,\dots, n\\
s(t)&=s_0+ \beta t^b+\text{(higher terms)},\,0\le s_0\le 1
\end{split}
\]
 and let  $I=\{i\,|\, z_i(t)\not\equiv 0\}$, $P=(p_i)\in N_+^{*I}$, and $\bfa=(a_i)\in \mathbb C^{*I}$. Then by the definition of $R(\mathbf z)$,   
\begin{eqnarray}
{\ord}\,s(t)\partial R(\bfz(t))\ge m(P)(N-1) > m(P)\eta_{J,max}''
\end{eqnarray}
where $N=\min\{N_1,\dots, N_n\}$.
By Theorem\,\ref{main-theorem2},
\begin{eqnarray}\label{limit-tangent}
{\ord}\,{\partial f}(\bfz(t))\le m(P)\eta_{J,max}''.
\end{eqnarray} 
Thus 
\[
{\ord}\,{\partial} f_{s_0}(\bfz(t))={\ord}\,\partial f(\bfz(t))
\]
and 
\[
\lim_{t\to 0} (\partial f_{s(t)}(\bfz(t))_{norm}=\lim_{t\to 0} (\partial f_{s(0)}(\bfz(t))_{norm}.
\]
This already implies the family of hypersurfaces $V_s$ have isolated singularities at the origin.
Note that
the equality 
$f_P(\bfa)=0$ follows from the equality $f_{s(t)}(\bfz(t))\equiv 0$.
The assumption gives us the contradicting equalities
\[\begin{split}
&|(\bfz(t),\overline{\partial f_{s(t)}(\bfz(t))})_{norm}|\equiv 1,\quad
\text{and}\,\,\\
&\lim_{t\to 0}\,{(\bfz(t),\overline{\partial f_{s(t)}(\bfz(t))})_{norm}}= 0
\end{split}
\]
where the first equality follows from the assumption  (\ref{transverse}) and the second convergence follows from (\ref{limit-tangent}) and Lemma \ref{tangent-cone}.
Thus the family $f_s,\,0\le s\le 1$ has a uniform stable radius and the isomorphisms of the Milnor fibrations  are easily obtained using tubular Milnor fibration and Ehresman's fibration theorem (\cite{W}).
\end{proof}
\begin{Remark}
By the same argument, it is easy to see that $f_t(\bfz)=f(\bfz)+t \bfz^{\nu}$ with
$\sum_{i=1}^n \nu_i\ge \eta_{J,max}''(f)+1$ does not change the topology at  the origin.
A similar result for $C^0$-sufficiency is proved in Kuo \cite{Kuo}.
\end{Remark}

\newpage
\section{
\L ojasiewicz inequality for mixed functions}

In this   section, 
we will introduce the notion of  \L ojasiewicz exponent  for a mixed function and we generalize the estimation obtained for non-degenerate holomorphic  functions in  previous sections for $f(\mathbf z,\bar{\mathbf z})$ which are strongly non-degenerate.  

\subsection{Newton boundaries and various gradients}
Let  $f$  be a mixed function  expanded as
 \[f(\bfz,\bar\bfz)=\sum_{\nu,\mu}c_{\nu\mu}\bfz^\nu{\bar\bfz}^\mu.
 \] 
The Newton diagram $\Ga_+(f)$ is defined as the convex hull of 
\[\bigcup_{c_{\nu\mu}\ne 0}\left ((\nu+\mu)+(\mathbb R_+)^n\right)
\]
and the Newton boundary $\Ga(f)$ is defined  by the union of compact faces of $\Ga_+(f)$ as in the holomorphic case,
using the radial weighted  degree of the monomial $\bfz^\nu{\bar\bfz}^\mu$. Here the radial weight degree with respect to the weight vector $P=(p_1,\dots, p_n)$,  is defined by
\[\rdeg_P \bfz^\nu{\bar\bfz}^\mu:=\sum_{i=1}^n p_i(\nu_i+\mu_i).\]
The notion of {\em strong non-degeneracy} is introduced in  \cite{OkaMix} to study
 Milnor fibration  defined by $f$. Let us recall the definition.
Take an arbitrary face $\Delta$ of $\Gamma(f)$ of any dimension.
The face function is defined  by
$f_{\Delta}(\bfz,\bar\bfz)=\sum_{\nu+\mu\in \Delta}c_{\nu\mu}\bfz^\nu{\bar\bfz}^\mu$.
Let $P$ be the weight vector. Then $f_P$ is defined as $f_{\Delta(P)}$ as in the holomorphic case. $f_P(\bfz,\bar\bfz)$ is a radially weighted homogeneous polynomial with weight $P$.
A mixed function $f$ is called {\em strongly non-degenerate} if
$f_{\Delta}:\,\mathbb C^{*n}\to \mathbb C$ has no critical point
 for any face function $f_{\Delta}$. It is known that such a mixed function admits a Milnor fibration at the origin (\cite{OkaMix}).

We assume that $\Ga(f)$ has dimension $n-1$.
We consider two gradient vectors:
\begin{eqnarray}
&\partial f&:=(\frac{\partial f}{\partial z_1},\dots, \frac{\partial f}{\partial z_n})\\
&\bar\partial f&:=(\frac{\partial f}{\partial \bar z_1},\dots, \frac{\partial f}{\partial \bar z_n})
\end{eqnarray}

To study the behavior of the tangent spaces, it is more useful to use the real and imaginary part of $f$. Put $f=g+ih$ where $g$ and $ h$ are real and imaginary parts of $f$. Putting $z_j=x_j+iy_j$, $g, h$ are real analytic functions of $2n$ variables $x_1,y_1,\dots, x_n,y_n$. Substituting $x_j=(z_j+{\bar z_j})/2$ and $y_j=(z_j-{\bar z_j})/2i$, we consider $g,h$ as mixed functions.
As  they are real valued mixed functions,
we have 
\[
T_{\bfa}g\inv(0)=\{\bfv\,|\, \Re(\bfv,\bar\partial g)=0\},\,\quad
T_{\bfa}h\inv(0)=\{\bfv\,|\, \Re(\bfv,\bar\partial h)=0\}.
\]
Here $(\bfv,\bfw)$ is the hermitian inner product.
As we have $\overline{\partial g}=\bar\partial g$ and $\overline{\partial h}=\bar \partial h$ ( \cite{OkaAf}), various gradients are related by 
\begin{eqnarray}
\bar \partial f&=\bar\partial g+i\bar \partial h,
\,\quad& \overline{\partial f}=\bar\partial g-i\bar \partial h\\
\bar\partial g&=(\bar\partial f+\overline{\partial f})/2,
\,\,
&\bar \partial h=(\bar f-\overline{\partial f})/2i.
\end{eqnarray}
For a weight vector $P=(p_1,\dots, p_n)$, the real  part and imaginary part $g_P=\Re f_P, h_P=\Im f_P$ of $f_P$ are  real-valued radially weighted homogeneous polynomials with weight $P$ and the Euler equality take the form(\cite{OkaPolar}):
\begin{eqnarray}
g_P(\bfz,{\bar \bfz})=2\sum_{i=1}^n p_iz_i\frac{\partial g_P}{\partial z_i},\,
h_P(\bfz,{\bar \bfz})=2\sum_{i=1}^n p_iz_i\frac{\partial h_P}{\partial z_i}.
\end{eqnarray}
The strong non-degenracy implies that 
$\{
\bar \partial g_P(\bfa,{\bar\bfa}),
\bar\partial h_P(\bfa,{\bar\bfa})\}$
are linearly independent over $\mathbb R$
for any $\bfa=(a_1,\dots, a_n)\in \mathbb C^{*n}$.
We consider the \L ojasiewicz inequalities of real-valued mixed functions $g$ and $h$.
 For a non-zero vector $\bfw\in \mathbb C^n$, we denote the real hyperplane orthgonal to $\bfw$ by
 $\bfw^{\perp}:=\{\bfv\in \mathbb C^n\,|\, \Re(\bfv,\bfw)=0\}$.  We denote the normalized vector
 $\bfw/\|\bfw\|$  by $\bfw_{norm}$. 
The problem (which do not happen for holomorphic functions)
is that along a given analytic curve $\bfz(t),\,0\le t\le 1$ with $\bfz(0)={\bf 0}$,
the limit directions  
$
\lim_{t\to 0}(\bar \partial g(\bfz(t)))_{norm}$  and  $\lim_{t\to 0}(\bar \partial h(\bfz(t)))_{norm}$ can be linearly dependent over $\mathbb R$.  
In this case, we have a proper inclusion:
\begin{multline}
\lim_{t\to 0}T_{\bfz(t)} V=\lim_{t\to 0}\left( (\bar \partial g(\bfz(t))^\perp\cap  (\bar \partial h(\bfz(t)))^\perp\right)\\
\subsetneq  \lim_{t\to 0}(\bar \partial g(\bfz(t))_{norm})^{\perp}\cap \lim_{t\to 0}(\bar \partial h(\bfz(t))_{norm})^{\perp}
\end{multline}
The following lemma plays a key role to solve this problem.
Let $I=\{1\le j\le n\,|\, z_j(t)\not\equiv 0\}$ and 
and $I^c$ be the complement of $I$.
\begin{eqnarray}\label{test-curve3}
C(t):  \begin{cases}
&\bfz(t)=(z_1(t),\dots, z_n(t)),\,\,\bfz(0)=0,\, \bfz(t)\in \mathbb C^{*I},\,t>0\\
&z_i(t)=a_it^{p_i}+\text{(higher terms)},\quad a_i\ne 0,\, i\in I\end{cases}
\end{eqnarray}
Consider the weight vector $P=(p_i)\in { N}^{*I}$, $\bfa=(a_i)_{i\in I}\in \mathbb C^{*I}$  and we assume that $f^I\ne 0$.
For any real valued analytic function $b(t)$ defined on an open neighborhood of $t=0$, we consider the modified gradient vectors, defined as follows.
\[
\begin{split}
&(\bar \partial g(\bfz(t)))_{b(t)}:=\bar \partial g(\bfz(t))+b(t) \bar \partial h(\bfz(t))\\
&(\bar \partial h(\bfz(t)))_{b(t)}:=\bar \partial h(\bfz(t))+b(t) \bar \partial g(\bfz(t)).
\end{split}
\]
Note that any of the following three pairs generate the same real dimension 2 subspace over $\mathbb R$ at $T_{\mathbf z(t)}\mathbb C^n$.
\[\{\bar \partial g(\bfz(t)),\bar \partial h(\bfz(t))\},
\,\{\bar \partial g(\bfz(t)),(\bar \partial h(\bfz(t)))_{b(t)}\},\,
\{(\bar \partial g(\bfz(t)))_{b(t)},\bar \partial h(\bfz(t))\}
\]
The following is a key lemma to generalize the assertions obtained in previous sections
 for mixed functions.
Consider the family of hypersurface  $V_t:=\{\bfz\in \mathbb C^n\,|\, f(\bfz,{\bar\bfz})=f(\bfz(t),{\bar\bfz(t)})\}$ for $ -\eps\le t\le \eps$
where  $V_t$ passes  through $\bfz(t)$.
\begin{Lemma} 
\label{basic-lemma}
Assume that $\ord\, \bar \partial g(\bfz(t))\le \ord\, \bar \partial h(\bfz(t))$ for simplicity.
 
\noindent
{\rm (i)} There exists a real valued  analytic function
$b(t)$ so that 
two  analytic curves 
$\bar \partial g(\bfz(t)), (\bar\partial h(\bfz(t)))_{b(t)}$ have normalized limits
\[\begin{split}
&v_{g,\infty}:=\lim_{t\to 0}(\bar \partial g(\bfz(t)))_{norm}\,\,\text{and}\\
&v_{h,\infty}':=\lim_{t\to 0} ((\bar\partial h(\bfz(t)))_{b(t)})_{norm}
\end{split}
\]
 which are  linearly indepent over 
$\mathbb R$. The limit of the tangent space $T_{\bfz(t)}V_t$ is equal to  the intersection of the hyperplanes
$v_{g,\infty}^{\perp}\cap (v_{h,\infty}')^{\perp}$.
\newline
{\rm (ii)} The orders of the vectors $ \bar \partial g(\bfz(t)),\,  (\bar\partial h(\bfz(t)))_{b(t)}$
 satisfy the inequality:
\[{\ord}\,  \bar\partial g(\bfz(t)),\,{\ord}\,  (\bar\partial h(\bfz(t)))_{b(t)}\le d(P,f^I)-m(P).
\]
{\rm (iii)} If further $f_P(\bfa)= 0$, the limit vector
  $\lim_{t\to 0}\bfz(t)_{norm}$ is real orthgonal to  
$v_{g,\infty}$ and $v_{h,\infty}'$ i.e., $\lim_{t\to 0}\bfz(t)_{norm}\in v_{g,\infty}^{\perp}\cap (v_{h,\infty}')^{\perp}$.
\nl
{\rm (iv)}  For any analytic  functions $b(t), c(t)$,
\[{\ord}\,\bar\partial g(\bfz(t))_{c(t)},\, {\ord}\,\bar\partial h(\bfz(t))_{b(t)}\le d(P,f^I)-m(P).
\]
\end{Lemma}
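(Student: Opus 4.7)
The plan is to handle the four parts in the order (ii)$\to$(iv)$\to$(i)$\to$(iii), each feeding the next.

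For (ii) and (iv), I start from the component-wise expansion
\[
(\partial g/\partial \bar z_j)(\bfz(t)) \;=\; (\partial g_P^I/\partial \bar z_j)(\bfa,\bar\bfa)\,t^{d(P,f^I)-p_j} + (\text{higher}),
\]
and analogously for $h$. Strong non-degeneracy of $f_P^I$ at $\bfa\in\mathbb C^{*I}$ makes $\bar\partial g_P^I(\bfa)$ and $\bar\partial h_P^I(\bfa)$ $\mathbb R$-linearly independent, so for any real $c_0=c(0)$ the vector $\bar\partial g_P^I(\bfa)+c_0\,\bar\partial h_P^I(\bfa)$ is non-zero in $\mathbb C^n$. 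Choosing a non-vanishing component $j_0$ gives the leading term of the $j_0$-th slot of $(\bar\partial g)_{c(t)}(\bfz(t))$ at order $d(P,f^I)-p_{j_0}\le d(P,f^I)-m(P)$, proving the bound. The real-valuedness of $b(t),c(t)$ is essential here: only $\mathbb R$-independence, not $\mathbb C$-independence, is at our disposal.

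For (i), I define
\[
b(t) \;:=\; -\,\frac{\Re\langle \bar\partial h(\bfz(t)),\bar\partial g(\bfz(t))\rangle}{\|\bar\partial g(\bfz(t))\|^2}.
\]
Since $t$ is real and $\|\bar\partial g(\bfz(t))\|^2$ is a sum of non-negative analytic functions vanishing at $t=0$ of order exactly $2q_g$, while the numerator has order $\ge q_g+q_h\ge 2q_g$, the function $b(t)$ extends analytically across $t=0$. By construction $(\bar\partial h)_{b(t)}(\bfz(t))$ is real-orthogonal to $\bar\partial g(\bfz(t))$ for every small $t>0$. To secure the existence of the normalized limit $v_{h,\infty}'$, I must rule out $(\bar\partial h)_{b(t)}\equiv 0$ as a formal series; if it did vanish, then $\bar\partial h(\bfz(t))=-b(t)\bar\partial g(\bfz(t))$, and matching coefficients component-wise at the lowest order $d(P,f^I)-p_j$ forces $\bar\partial h_P^I(\bfa)=-b(0)\bar\partial g_P^I(\bfa)$ with $b(0)\in\mathbb R$, contradicting strong non-degeneracy. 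The real-orthogonality persists in the limit, giving $\mathbb R$-linear independence of $v_{g,\infty}$ and $v_{h,\infty}'$. Since $\{\bar\partial g,\bar\partial h\}$ and $\{\bar\partial g,(\bar\partial h)_{b(t)}\}$ span the same real $2$-plane at every $t$, their real-orthogonal complements coincide at every $t$, and passing to the limit in the Grassmannian yields $\lim T_{\bfz(t)}V_t = v_{g,\infty}^\perp\cap (v_{h,\infty}')^\perp$.

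For (iii), assume $f_P(\bfa)=0$, so $g_P^I(\bfa)=h_P^I(\bfa)=0$. Using $\overline{\partial g/\partial\bar z_j}=\partial g/\partial z_j$ for the real function $g$, I expand
\[
\Re\bigl(\bfz(t),\bar\partial g(\bfz(t))\bigr) \;=\; \Re\sum_{j\in I} z_j(t)\,(\partial g/\partial z_j)(\bfz(t)),
\]
whose leading order is $\ge d(P,f^I)$ with leading coefficient $\Re\sum_{j\in I}a_j(\partial g_P^I/\partial z_j)(\bfa)$. Euler's identity for the radially weighted homogeneous real function $g_P^I$ of degree $d(P,f^I)$ gives $d(P,f^I)\,g_P^I(\bfa) = 2\Re\sum_{j\in I} p_j a_j\,(\partial g_P^I/\partial z_j)(\bfa) = 0$. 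When $q_g=d(P,f^I)-m(P)$, the minimality of $q_g$ forces $(\partial g_P^I/\partial \bar z_j)(\bfa)=0$ for every $j$ with $p_j>m(P)$, which reduces the $p_j$-weighted Euler sum to $m(P)$ times the unweighted sum over $I_0:=\{i\in I:p_i=m(P)\}$ and matches our leading coefficient; when $q_g<d(P,f^I)-m(P)$, the ratio $\Re(\bfz,\bar\partial g)/(\|\bfz\|\cdot\|\bar\partial g\|)$ has strictly positive order and vanishes trivially. Either way $\Re(\bfa_\infty,v_{g,\infty})=0$. For $v_{h,\infty}'$, expand $\Re(\bfz,(\bar\partial h)_{b(t)}) = \Re(\bfz,\bar\partial h) + b(t)\,\Re(\bfz,\bar\partial g)$; the first summand is $o(\|\bfz\|\cdot\|\bar\partial h\|)$ by the same analysis applied to $h$, and the second has the extra factor $b(t)$ of order $\ge q_h-q_g$, pushing both terms beyond $m(P)+q_h'$ and yielding $\Re(\bfa_\infty,v_{h,\infty}')=0$.

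The main obstacle will be the borderline case $q_g=d(P,f^I)-m(P)$ in (iii): Euler's identity a priori yields vanishing of only the $p_j$-weighted sum, not of the unweighted leading coefficient we actually need. The rescue is the structural observation that the very minimality of $q_g$ forces the face components $(\partial g_P^I/\partial\bar z_j)(\bfa)$ to vanish at every index outside $I_0$, collapsing the two sums to a common multiple of each other. A secondary subtlety, in (i), is verifying that the projection $b(t)$ never makes $(\bar\partial h)_{b(t)}$ vanish identically as a formal series; this is precisely where the $\mathbb R$-independence (rather than $\mathbb C$-independence) in strong non-degeneracy is indispensable.
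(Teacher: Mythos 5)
Your construction for part (i) is genuinely different from the paper's and is sound. The paper builds the modifier iteratively: whenever the two normalized limits are $\mathbb R$-dependent it subtracts $\rho\, t^{d_h-d_g}\,\bar\partial g^I(\bfz(t))$ to kill the leading coefficient at the leading index, and the process terminates because the order of the modified gradient stays trapped below $d(P,f^I)-m(P)$ by the same coefficient-matrix argument you use for (ii) and (iv). Your one-shot Gram--Schmidt projection $b(t)=-\Re\langle\bar\partial h(\bfz(t)),\bar\partial g(\bfz(t))\rangle/\|\bar\partial g(\bfz(t))\|^2$ replaces that finite induction by a single analytic division (legitimate, since the numerator has order $\ge q_g+q_h\ge 2q_g$ while the denominator has order exactly $2q_g$), and it yields real-orthogonality of the modified pair for every small $t$, not merely in the limit --- a cleaner statement from which the $\mathbb R$-independence of the limits and the identification of $\lim T_{\bfz(t)}V_t$ follow at once. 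Your treatment of (ii), (iv), and of $v_{g,\infty}$ in (iii) coincides with the paper's: the coefficient vector at $t^{d-p_j}$ is $\bar\partial g^I_P(\bfa)+c(0)\bar\partial h^I_P(\bfa)\ne 0$ by $\mathbb R$-independence, and the borderline case of (iii) is handled by the collapse of the weighted Euler sum.

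One step, as written, would fail. For $\Re(\bfa_\infty,v'_{h,\infty})=0$ you estimate the two summands of $\Re(\bfz,\bar\partial h)+b(t)\Re(\bfz,\bar\partial g)$ separately and claim each is pushed beyond order $m(P)+q'_h$. In the borderline case $q'_h={\ord}\,(\bar\partial h(\bfz(t)))_{b(t)}=d(P,f^I)-m(P)$ with $q_h<q'_h$ (i.e.\ when the projection genuinely raises the order of $\bar\partial h$), each summand contributes a coefficient at $t^{d}$, namely $\Re\sum_j a_j(\partial h^I_P/\partial z_j)(\bfa)$ and $b(0)\Re\sum_j a_j(\partial g^I_P/\partial z_j)(\bfa)$, and neither need vanish individually: the Euler identity only kills the $p_j$-weighted sums, and since $q_h<d(P,f^I)-m(P)$ the face derivatives of $h^I_P$ at indices with $p_j>m(P)$ do not all vanish, so the weighted sum does not collapse to the unweighted one for $h$ alone. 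The fix is exactly the device you already used for $v_{g,\infty}$, applied to the single real radially weighted homogeneous polynomial $h^I_P+b(0)g^I_P$: the condition $q'_h=d(P,f^I)-m(P)$ forces $\partial(h^I_P+b(0)g^I_P)/\partial\bar z_j(\bfa)=0$ for all $j$ with $p_j>m(P)$, its Euler identity then collapses to $m(P)$ times the unweighted sum over $\{j\,|\,p_j=m(P)\}$, and that sum is, up to a positive scalar, $\Re(\bfa_\infty,v'_{h,\infty})$. This is precisely how the paper argues, with its accumulated constant $-\rho(0)$ playing the role of your $b(0)$.
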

Here  two vectors $\bfv,\bfw\in \mathbb C^n$  are called to be  real orthgonal if $\Re(\bfv,\bfw)=0$.
\begin{proof} The proof is essentially the same as the proof of Theorem 3.14,  \cite{ EO13}.
For the convenience,  we repeat the proof briefly. For further related discussion, see \cite{OkaAf,EO13}. Put  $I=\{i\,|\, z_i(t)\not \equiv 0\}$ and $J=I^c$.
We may assume that $I=\{1,\dots,m\}$, $\bfa=(a_1,\dots, a_m)$ and 
\[
p_1=p_2=\cdots=p_k<p_{k+1}\le \cdots\le p_m.
\]
Put $d=d(P,f^I)$.
Under the above assumption, $\ord\,\bfz(t)=p_1$ and 
\[\begin{split}
\lim_{t\to 0} (\bfz(t))_{norm}=\bfa(\infty)_{norm}
\qquad \text{where} \quad\bfa(\infty):=(a_1,\dots, a_k,0,\dots,0).
\end{split}
\]
By the definition of $P$ and the assumption $f^I\ne 0$, 
\[\begin{split}
\frac{\partial g^I}{\partial \bar z_i}(\bfz(t))&=\frac{\partial g^I_P}{\partial\bar z_i}(\bfa) t^{d-p_i}+\text{(higher terms)}\\
\frac{\partial h^I}{\partial \bar z_i}(\bfz(t))&=\frac{\partial h^I_P}{\partial \bar z_i}(\bfa) t^{d-p_i}+\text{(higher terms)}
\end{split}
\]
Thus by the strong non-degeneracy assumption, we have  the inequality:
\[
 {\ord}\,{\bar\partial g^I}(\bfz(t))\le d-p_1,\quad
{ \ord}\,{\bar\partial h^I}(\bfz(t))\le d-p_1.
\]
For an analytic curve $\bfv(t)$ with $\bfv(0)=0$, we associate scalar vector 
\[
\be(\bfv(t)):=(\beta_1,\dots, \beta_m),
\,\,\text{where}\,\,\beta_i=\text{Coeff}(v_i(t),t^{d-p_i})
\]
and integers 
\[\begin{split}
&d(\bfv(t)):=\min\{\ord\, v_i(t)\,\, |\, i=1,\dots, m\},\,\\
&\ga_{\bfv}:=\max\{i\,|\, \ord\,v_i(t)=d(\bfv(t))\}. 
\end{split}
\]
Note that $\ga_{\bfv}$ is the largest index for which 
$\lim_{t\to 0}\bfv(t)_{norm}$ has non-zero coefficient.
We call $\ga_{\bfv}$ {\em the leading index} of $\bfv(t)$.

We start from two analytic curves  $\bar\partial g^I(\bfz(t))$ and $\bar\partial h^I(\bfz(t))$.
Put  $d_g={\ord}(\bar\partial g^I(\bfz(t))$, $\,d_h={\ord}(\bar\partial h^I(\bfz(t))$ and $\ga_{g}:=\ga_{\bar\partial g^I(\bfz(t))},\,\ga_h:=\ga_{\bar\partial h^I(\bfz(t))}$.
We assume that $d_g\le d_h$.
First we associate $2\times m$-matrix with complex coefficients by
\[
A(\bar\partial g^I,\bar\partial h^I )
:=\left(\begin{matrix}
\be(\bar\partial g^I(\bfz(t))\\
\be(\bar\partial h^I(\bfz(t))
\end{matrix}\right)=\left(
\begin{matrix}
\frac{\partial g^I_P}{\partial\bar z_1}(\bfa) &\cdots&\frac{\partial g^I_P}{\partial \bar z_m}(\bfa) \\
&&\\
\frac{\partial h^I_P}{\partial\bar z_1}(\bfa) &\cdots&\frac{\partial h^I_P}{\partial\bar z_m}(\bfa)
\end{matrix}\right)
\]
By the strong  non-degeneracy assumption,  two raw complex  vectors are linearly independent over $\mathbb R$.
The normalized limit, $\lim_{t\to 0}(\bar\partial g^I(\bfz(t)))_{norm}$, has non-zero j-th coefficient if and only if
${\ord}\,\frac{\partial g^I}{\partial{\bar z}_j}(\bfz(t))=d_g$.
The following three cases are possible.
\begin{enumerate}
\item $\ga_g\ne \ga_h$.
\item $\ga_g=\ga_h$ and  $\{{\Coeff}\,(\frac{\partial g^I}{\partial{\bar z}_{\ga_g}}(\bfz(t)),t^{d_g}),
{\Coeff}\,(\frac{\partial h^I}{\partial{\bar z}_{\ga_h}}(\bfz(t)),t^{d_h})\}$ are linearly independent over $\mathbb R$.
\item $\ga_g=\ga_h$ and  $\{{\Coeff}\,(\frac{\partial g^I}{\partial{\bar z}_{\ga_g}}(\bfz(t)),t^{d_g}),
{\Coeff}\,(\frac{\partial h^I}{\partial{\bar z}_{\ga_h}}(\bfz(t)),t^{d_h})\}$
are linearly dependent over $\mathbb R$.
\end{enumerate}
In the cases of (1),(2), their normalized limits are linearly independent over $\mathbb R$ and there is no operation necessary.
In the case of (3), we put $b_1(t)=\rho_1 t^{d_h-d_g}$ 
and 
put 
$(\bar\partial h)'(t):=\bar\partial h^I(\bfz(t))-b_1(t)\bar\partial g^I(\bfz(t))$. 
Here $\rho_1$ is the unique real number such that 
\[\rho_1{\Coeff}\,(\frac{\partial g^I}{\bar z_{\ga_g}}(\bfz(t)),t^{d_g})-
{\Coeff}\,(\frac{\partial h^I}{\bar z_{\ga_h}}(\bfz(t)),t^{d_h})=0.
\] 
After this operation, we have
three possible cases.
\begin{enumerate}
\item[(1)'] $\ga_{(\bar\partial h)'(t)}\ne \ga_{\bar\partial g^I(\bfz(t))}$
\item [(2)'] $\ga_{(\bar \partial h)'(t)}=\ga_{\bar \partial g^I(\bfz(t))}$ but the leading coefficients of
$ (\bar \partial h)'(t) $ and $\bar\partial g^I(\bfz(t))$
are linearly independent over $\mathbb R$.
\item[(3)'] $\ga_{(\bar \partial h)'(t)}=\ga_{\bar \partial g^I(\bfz(t))}$  and the leading coefficients of
$ (\bar \partial h)'(t) $ and $\bar\partial g^I(\bfz(t))$
 are still linearly dependent over $\mathbb R$.
\end{enumerate}
In the case of $(1)'$ and $(2)'$, we stop the operation. 
Otherwise we have $(3)'$ and 
we continue this operation till we get a modified gradient vector 
\[(\bar\partial h)^{(j)}(t)=\bar\partial h^I(\bfz(t))-\rho(t)\bar\partial g^I(\bfz(t)),\,\rho(t):=\sum_{i=1}^j b_i(t)
\]
for which either  its leading index is different from $\ga_g$ (case (1)) or the coefficients of the leading index are lenearly independent over $\mathbb R$ (case (2)).
Note that in this operation, the order of $(\bar\partial h)^{(j)}(t)$ is strictly increasing in $j$ while  
the matrix
$A(\bar\partial g,(\bar\partial h)^{(j)})$ is simply changed in  the second raw vector by
$\be(\bar\partial h^I)-\rho(0)\be(\bar\partial g^I)$. Therefore
$ (\bar\partial h)^{(j)}(t):=\bar\partial h^I(\bfz(t))-\rho(t) \bar\partial g^I(\bfz(t))$ satisfies 
\[
{\ord}\, (\bar\partial h)^{(j)}(t)\le d-p_1
\]
and therefore the operation should stop after finite steps, say $k$.
After the operation is finished, the normalized vector $((\bar\partial h)^{(k)}(t))_{norm}$ has linearly independent limit with that of $\bar\partial g^I(\bfz(t))$.
\newline\indent
Suppose further  $f^I_P(\bfa)=0$. This implies $g^I_P(\bfa)=h^I_P(\bfa)=0$.
 We will show now $\bfa_\infty=(a_1,\dots,a_k,0,\dots,0)$ is orthgonal to the limits of the normalized vectors
\[
v^g_\infty:=\lim_{t\to 0} (\bar\partial g^I(\bfz(t)))_{norm}\quad \text{and}\quad
v^{(\bar\partial h)^{(k)}}_\infty:=\lim_{t\to 0}((\bar\partial h)^{(k)}(t))_{norm}. 
\]
 First we consider  $v^g_\infty$.
 If $d_g<d-p_1$,  $j$-coefficient of $v^{g}_\infty$  is zero for $j\le k$ and 
$ \Re(v^g_\infty,\bfa_\infty)=0$ is obvious. 
If $d_g=d-p_1$,  we must have 
\[\frac{\partial g^I_P}{\partial \bar z_j}(\bfa)=0,\, k+1\le j\le m
\]
and the $i$-th coefficient of $v^g_\infty$ is
$\frac{\partial g^I_P}{\partial \bar z_i}(\bfa)$ for $1\le i\le k$ up to a scalor multiplication.
Thus the  assertion  follows from the Euler equality
\[
g^I_P(\bfa)=0=\sum_{i=1}^m p_i a_i \frac{\partial g^I_P}{\partial \bar z_i}(\bfa)= p_1\sum_{i=1}^k a_i \frac{\partial g^I_P}{\partial \bar z_i}(\bfa).
\] 
Now we consider $v^{(\bar\partial h)^{(k)}}_\infty$. 
We start from the equality
$h^I_P(\bfa)-\rho(0) g^I_P(\bfa)=0$. If $d_{ (\bar\partial h)^{(k)}(t)}<d-p_1$, $v^{(\bar\partial h)^{(k)} }_\infty$
and $\bfa_\infty$ are orthgonal by the same reason.
Suppose that  $d_{(\bar\partial h)^{(k)}(t)}=d-p_1$. Then we must have 
\[\frac{\partial h^I_P}{\partial \bar z_j}(\bfa)
-\rho(0)\frac{\partial g^I_P}{\partial \bar z_j}(\bfa) =0,\, k+1\le j\le m.
\]
Thus the assertion follows from the Euler equality of
the real valued radially weighted homogeneous polynomial
$h^I_P(\bfz,\bar\bfz)-\rho(0)g^I_P(\bfz,\bar\bfz)$.
The assertion (iv) can be shown in a similar way looking at the matrix $A$ before and after.
\end{proof}
\begin{Definition}Let $\bfz(t)$ be a analytic curve starting at the origin. Assume that 
$\bar\partial g(\bfz(t))\le \ord\,\bar \partial h(\bfz(t))$ and $\{\bar\partial g(\bfz(t)),\bar \partial h(\bfz(t))_{c(t)}\}$
(respectively  $\ord\,\bar\partial g(\bfz(t))>\bar \partial h(\bfz(t)))$ and 
$\{\bar\partial g(\bfz(t))_{c(t)},\bar \partial h(\bfz(t))\}$
)
is a good  modified gradient pair if 
they have linearly independent normalized limits over $\mathbb R$.

Take an arbitrary analytic curve $C(t): \bfz=\bfz(t)$ with $\bfz(0)=0$ and  a good modofied gradient pair, say $\{\bar\partial g(\bfz(t)),\bar \partial h(\bfz(t))_{c(t)}\}$
assuming $\bar\partial g(\bfz(t))\le \ord\,\bar \partial h(\bfz(t))$ for simplicity and suppose that  the following inequality is satisfied
for sufficiently small $t$, $0\le t\le \eps$, 
\begin{eqnarray}
{\ord}\,(\bar\partial g(\bfz(t),{\bar\bfz}(t)))/{\ord}\, \bfz(t)&\le &\theta\\
{\ord}\,(\bar\partial h(\bfz(t),{\bar\bfz(t)})_{c(t)}/{\ord}\, \bfz(t)&\le&\theta.
\end{eqnarray}
(If  $\bar\partial g(\bfz(t))> \ord\,\bar \partial h(\bfz(t))$, we exchange $g$ and $h$ in the above inequalities so that $\bar\partial g$ is to be modified.)
We define  the \L ojasiewicz exponent $ \ell_0(C(t))$ along an analytic curve $C(t)$ as 
the
infinimum of such $\theta$ satisfying the above inequality.
 The \L ojasiewicz exponent of $f$ is defined by the supremum of 
$\ell_0(C(t))$
 for all anaylytic curves $C(t)$.
\nl
These inequalities are equivalent to the inequality
\[
\|\bar\partial g(\bfz(t),{\bar\bfz}(t)))\|,
\|\bar\partial h(\bfz(t),{\bar\bfz}(t)))_{c(t)}\|\ge C\|\mathbf z(t)\|^\theta,\,\exists C>0
\]
for $0\le t\le \eps$.
\end{Definition}
Taking $c(t)\equiv 0$, such $\theta$ satisfies the usual \L ojasiewicz inequalities:
\begin{eqnarray}
\|\bar\partial g(\bfz,{\bar\bfz})\|&\ge &C \,\|\bfz\|^{\theta}\\
\|\bar\partial h(\bfz,{\bar\bfz})\|&\ge &C\,\|\bfz\|^\theta,\,\exists C>0.
\end{eqnarray}
in a sufficiently small neighbourhood of the origin.

Now we are ready to generalize the results which are obtained  in previous sections for holomorphic functions.
\subsection{Convenient case}
We consider a strongly non-degenerate mixed function $f(\bfz,{\bar\bfz})$ with an isolated singularity at the origin.
A mixed function $f(\bfz,{\bar\bfz})$ is called {\em convenient} if for each $i=1,\dots, n$,
there is  a point  $B_i=(0,\dots,\overset i{\overset \smile b_i},\dots,0)$
on the Newton boundary $\Ga(f)$.   In the mixed function case, there might exist several corresponding  mixed monomial
$z_i^{\nu_i}{\bar z_i}^{\mu_i}$ with $\nu_i+\mu_i=b_i$ in the expansion of $f(\bfz,{\bar\bfz})$.
Such a monomial is called {\em an $i$-axis monomial}.
Let $B:=\max\{b_i\,|\, i=1,\dots, n\}$.
 An $i$ axis monomial
$z_i^{\nu_i}{\bar z_i}^{\mu_i}$
 is called {\em \L ojasiewicz monomial}  if $\nu_i+\mu_i=B$.
 A  \L ojasiewicz monomial $z_i^{\nu_i}{\bar z_i}^{\mu_i}$  is {\em  exceptional} if  there exists a monomial
$z_i^{\nu_i'}{\bar z_i}^{\mu_i'} w_j$ where $w_j=z_j$ or $\bar z_j$ in the expansion of $f(\bfz,{\bar \bfz})$
such that $\nu_i'+\mu_i' <B-1$.
\begin{Theorem}
 Let $f(\bfz,{\bar\bfz})$ 
 be a strongly nondegenerate convenient mixed function.
Then \L ojasiewicz exponent   $\ell_0(f)$ satisfies the inequality:
$\ell_0(f)\le B-1$.
Furthermore if $f$ has a \L ojasiewicz non-exceptional monomial, we have the equality $\ell_0(f)=B-1$.
\end{Theorem}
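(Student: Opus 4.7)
\smallskip

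\noindent\textbf{Proof plan.} My plan is to transpose the proof of Theorem \ref{convenientLojasiewicz} to the mixed setting, using Lemma \ref{basic-lemma} as the workhorse that replaces the elementary order estimate \eqref{estimation2}. As in the holomorphic case, by the (mixed) curve selection lemma it suffices to bound $\ell_0(C(t))$ along an arbitrary non-constant analytic arc of the form (\ref{test-curve3}).

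\smallskip

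\noindent\textbf{Upper bound $\ell_0(f)\le B-1$.} Fix such a test curve, set $I=\{i\,|\,z_i(t)\not\equiv 0\}$ and $P=(p_i)_{i\in I}\in N_+^{*I}$, and assume without loss of generality $I=\{1,\dots,m\}$. Since $f$ is convenient, every axis of $\mathbb C^I$ carries some axis monomial of total degree $b_i\le B$, so $f^I\not\equiv 0$ and the supporting hyperplane $\sum_{i\in I}p_i\nu_i=d(P,f^I)$ cannot pass above the point $b_i\,e_i$ on the $\nu_i$-axis, giving
\[
\frac{d(P,f^I)}{m(P)}\;=\;\max_{i\in I}\frac{d(P,f^I)}{p_i}\;\le\;\max_{i\in I}b_i\;\le\;B.
\]
Now apply Lemma \ref{basic-lemma}: parts (i) and (iv) furnish a good modified gradient pair $\{\bar\partial g(\bfz(t)),\,\bar\partial h(\bfz(t))_{b(t)}\}$ (or with $g,h$ interchanged), both of whose orders are bounded by $d(P,f^I)-m(P)$. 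Dividing by ${\ord}\,\bfz(t)=m(P)$ yields
\[
\ell_0(C(t))\;\le\;\frac{d(P,f^I)-m(P)}{m(P)}\;=\;\frac{d(P,f^I)}{m(P)}-1\;\le\;B-1.
\]

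\smallskip

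\noindent\textbf{Sharpness when a non-exceptional \L ojasiewicz monomial exists.} Assume for simplicity that $z_1^{\nu_1}\bar z_1^{\mu_1}$ with $\nu_1+\mu_1=B=b_1$ is \L ojasiewicz non-exceptional. Choose $N\gg 0$, set $P=(1,N,\dots,N)$, and test with the curve $\bfz(t)=(a_1 t,a_2 t^N,\dots,a_n t^N)$ where $\bfa=(a_1,\dots,a_n)\in\mathbb C^{*n}$ is generic. The non-exceptional hypothesis forces every monomial of $f$ divisible by some $z_j$ or $\bar z_j$ with $j\neq 1$ to have $(z_1,\bar z_1)$-total degree at least $B-1$; hence for such a $j$, every term of $\partial f/\partial z_j$ and $\partial f/\partial\bar z_j$ contributes along the curve at order at least $B-1$, while the axis monomial $z_1^{\nu_1}\bar z_1^{\mu_1}$ contributes to $\partial f/\partial z_1,\partial f/\partial\bar z_1$ at order exactly $B-1$. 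Consequently $f_P$ is the axis piece plus weight-$d(P)=B$ pieces, and by strong non-degeneracy of $f_P$ on $\mathbb C^{*n}$ (applied at $\bfa$), the normalized limits of $\bar\partial g(\bfz(t))$ and $\bar\partial h(\bfz(t))$ are $\mathbb R$-linearly independent, so no real modification $c(t)$ can push the order of either component above $B-1$. This produces $\|\bar\partial g(\bfz(t))\|,\|\bar\partial h(\bfz(t))\|\asymp t^{B-1}=\|\bfz(t)\|^{B-1}$, establishing $\ell_0(f)\ge B-1$.

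\smallskip

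\noindent\textbf{Main obstacle.} The essential subtlety is in the equality half: in the mixed case one must exclude that a clever choice of the real modification $c(t)$ lowers the order of $\bar\partial h_{c(t)}$ (or $\bar\partial g_{c(t)}$) below $B-1$, thereby improving the exponent. The required non-cancellation is precisely the linear independence over $\mathbb R$ of the leading coefficients of $\bar\partial g_P(\bfa)$ and $\bar\partial h_P(\bfa)$, which Lemma \ref{basic-lemma} guarantees under strong non-degeneracy; the remaining task is to make sure the generic choice of $\bfa$ indeed places us in this non-degenerate regime, and that the construction survives the passage $z_j(t)\equiv 0$ cases handled automatically by the restriction $f^I$ being itself convenient with $B_I\le B$.
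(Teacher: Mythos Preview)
Your approach is the paper's: the upper bound transplants the holomorphic argument via Lemma \ref{basic-lemma}, and sharpness is the $P=(1,N,\dots,N)$ curve of Theorem \ref{convenientLojasiewicz}. The paper gives no more than ``using Lemma \ref{basic-lemma}, the proof is completely parallel,'' so you are on target.

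Two points need cleaning up in the sharpness half. First, your reading of ``non-exceptional'' is wrong: it does \emph{not} force every monomial containing some $w_j$, $j\neq 1$, to have $(z_1,\bar z_1)$-degree $\ge B-1$; it only excludes monomials of the specific shape $z_1^{\nu'}\bar z_1^{\mu'}w_j$ (a single outside variable) with $\nu'+\mu'<B-1$. A monomial like $z_2^3$ is untouched. The correct reason $\ord\,\partial f/\partial w_j(\bfz(t))\ge B-1$ for $j\ge 2$ is exactly the holomorphic one: after differentiating in $w_j$, terms pure in $z_1,\bar z_1$ come from monomials $z_1^{\nu'}\bar z_1^{\mu'}w_j$ and have degree $\ge B-1$ by non-exceptionality, while every other term still carries a factor $z_k$ or $\bar z_k$ with $k\ge 2$ and hence picks up at least one $t^N$, which dominates once $N$ is large.

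Second, your ``main obstacle'' is a phantom. In any good modified pair exactly one member is unmodified, and along your curve both $\bar\partial g(\bfz(t))$ and $\bar\partial h(\bfz(t))$ have order exactly $B-1$ (strong non-degeneracy of $f_P$, which here depends only on $z_1,\bar z_1$, makes both first components nonzero for every $\bfa\in\mathbb C^{*n}$; no genericity is needed). Hence the unmodified member already pins the maximum at $B-1$, giving $\ell_0(C(t))\ge B-1$; and taking $c(t)\equiv 0$ gives a good pair with both orders equal to $B-1$, giving $\ell_0(C(t))\le B-1$. The behaviour of the modified member is irrelevant to either inequality.
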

Using Lemma \ref{basic-lemma}, the proof is completely parallel to that of Theorem \ref{convenientLojasiewicz}.
\subsection{Non-convenient mixed polynomials}
We consider the case of non-degenerate mixed polynomials with an isolated singularity at the origin. 
One point is how to define "{\em \L ojasiewicz non-degeneracy} " for mixed functions.

Let $\mathbb C^I$ be a vanishing coordinate subspace
and let $J$ be the complement of $I$.
For each $i$, there must exist a mixed monomial $z_i^{n_{ij}}{\bar z_i}^{m_{ij}}w_j$ with $j\in J$ 
 as we have assumed that the origin is an isolated singularity. 
 Hereafter we use variable $w_j$ for either  $w_j=z_j$ or $w_j=\bar z_j$ for simplicity.
 We take $\ell_{ij}$ to be the minimum of 
 $\{n_{ij}+m_{ij}\}$ for fixed $i,j$. The point 
 $B_{ij}:=(0,\dots,\overset{\overset i\smile}\ell_{ij},\dots,\overset{\overset j\smile}1\dots,0)$
 is a point of $\Ga(f)$. We call a monomial  $z_i^{n_{ij}}{\bar z_i}^{m_{ij}}w_j$ 
  {\em an almost $i$-axis monomial} if $n_{ij}+m_{ij}=\ell_{ij}$.
 Let $J_i$ be the set of $j$ for which such almost $i$-axis monomial exists and put $J(I)=\cup_{i\in I}J_i$ as in Part I.

Define $\ell(I):=\max\{\ell_{ij}\,|\, i\in I,\,j\in J_i\}$.
For our present purpose, we take the following definition of \L ojasiewicz non-degeneracy.
Consider a strictly positive weight vector  $Q\in N^{*I}$.
Consider $F(j):=z_j( \frac{\partial f}{\partial z_j})^I+\bar z_j( \frac{\partial f}{\partial\bar z_j})^I$.
Note that 
$\frac{\partial F(j)_{Q}}{\partial z_j}$ and
$\frac{\partial F(j)_{Q}}{\partial \bar z_j}$ 
are polynomials in $\mathbb C[z_1,\bar z_1,\dots, z_m,\bar z_m]$. 
Here $F(j)_{Q}:=(F(j))_Q$ with the weight of $z_j$ being $0$ by definition. 
We use the notations $f_j:= \frac{\partial f}{\partial z_j}$ and 
$f_{\bar j}:= \frac{\partial f}{\partial \bar z_j}$ for simplicity.
Note that 
\[\begin{split}
&\frac{\partial F(j)_{Q}}{\partial z_j}=\begin{cases}
(f_j)_Q,\,&\rdeg_Q f_j\le \rdeg_Q f_{\bar j}\\
0,\, &\rdeg_Q f_j > \rdeg_Q f_{\bar j}
\end{cases}\\
&\frac{\partial F(j)_{Q}}{\partial \bar z_j}=
\begin{cases}
(f_{ \bar j})_Q,\,&\rdeg_Q f_j\ge\rdeg_Q f_{\bar j}\\
0,\, &\rdeg_Q f_j<\rdeg_Q f_{\bar j}\end{cases}.
\end{split}
\]
Put $I':=\{i\,|\, q_i=m(Q)\}$ and $J(Q):=\cup_{i\in I'}J_i$.
$f(\bfz,\bar\bfz)$ is called {\em  \L ojasiewicz non-degenerate} if under any such situation and for any $ \bfa\in \mathbb C^{*I}$, 
there exists $i_0\in I', j_0\in J(Q)$ such that 
\begin{eqnarray}\label{Lojasiewicz-non-degeneracy2}
\left |\frac{\partial F(j_0)_{Q}}{\partial z_{j_0}}(\bfa)\right |\ne\left |\frac{\partial F(j_0)_{Q}}{\partial\bar z_{j_0}}(\bfa)\right |.
\end{eqnarray}
Writing  $F(j):=g(j)+i h(j)$,
this is equivalent to  
\begin{Proposition}\label{mix-independence}
Under the above notations,
\begin{eqnarray}\label{good-modification}
\left \{\frac{\partial g(j_0)_{Q}}{\partial\bar z_{j_0}}(\bfa),\frac{\partial h(j_0)_{Q}}{\partial\bar z_{j_0}}(\bfa)
\right\}
\end{eqnarray}
are linearly independent over $\mathbb R$. 
\end{Proposition}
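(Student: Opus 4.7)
The plan is to translate the modulus inequality in (\ref{Lojasiewicz-non-degeneracy2}) into the $\mathbb R$-linear independence of the two complex gradients in (\ref{good-modification}) by a direct Wirtinger calculation on the face polynomial $F(j_0)_Q$.

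First I would record that since $g(j_0)=\Re F(j_0)$ and $h(j_0)=\Im F(j_0)$ are obtained from $F(j_0)$ by $\mathbb R$-linear combinations of $F(j_0)$ and $\overline{F(j_0)}$, and since passing to a $\rdeg_Q$-face is an $\mathbb R$-linear operation that respects conjugation (because $\rdeg_Q(\bfz^\nu\bar\bfz^\mu)$ depends only on $\nu+\mu$), we have
\[
g(j_0)_Q = \tfrac{1}{2}\bigl(F(j_0)_Q + \overline{F(j_0)_Q}\bigr),\qquad h(j_0)_Q = \tfrac{1}{2i}\bigl(F(j_0)_Q - \overline{F(j_0)_Q}\bigr).
\]
Applying $\partial/\partial\bar z_{j_0}$ and using the identity $\partial_{\bar z_{j_0}}\overline{F(j_0)_Q} = \overline{\partial_{z_{j_0}} F(j_0)_Q}$ (which follows from writing $F(j_0)_Q$ as a polynomial in $\bfz,\bar\bfz$ and conjugating monomials), I would obtain, after setting $A:=\partial_{z_{j_0}}F(j_0)_Q(\bfa)$ and $B:=\partial_{\bar z_{j_0}}F(j_0)_Q(\bfa)$,
\[
\partial_{\bar z_{j_0}} g(j_0)_Q(\bfa) = \tfrac{1}{2}(B+\bar A),\qquad \partial_{\bar z_{j_0}} h(j_0)_Q(\bfa) = \tfrac{1}{2i}(B-\bar A).
\]

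Next I would invoke the standard criterion that two complex numbers $w_1,w_2\in\mathbb C$ are $\mathbb R$-linearly independent (as vectors in $\mathbb R^2$) precisely when $\Im(w_1\bar w_2)\ne 0$. Dropping positive scalar factors, set $w_1:=B+\bar A$ and $w_2:=-i(B-\bar A)$. A short computation gives
\[
w_1\bar w_2 = -i(B+\bar A)(A-\bar B) = 2\,\Im(AB) - i\bigl(|A|^2-|B|^2\bigr),
\]
so that $\Im(w_1\bar w_2) = |B|^2-|A|^2$. Therefore the two vectors in (\ref{good-modification}) are $\mathbb R$-linearly independent if and only if $|A|\ne |B|$, which is exactly the hypothesis (\ref{Lojasiewicz-non-degeneracy2}).

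The potentially delicate point — and the only place one could slip — is the identification of the face $g(j_0)_Q$ with $\Re F(j_0)_Q$ when the weight $Q$ is extended by $q_{j_0}=0$. Once one checks that the radial degree $\rdeg_Q$ is invariant under complex conjugation of monomials, this identification is automatic, and the remainder of the argument is a direct algebraic manipulation. No further non-degeneracy input beyond (\ref{Lojasiewicz-non-degeneracy2}) is used.
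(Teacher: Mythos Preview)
Your argument is correct. The paper does not give a proof at all: it simply says ``The assertion follows from Proposition 1 \cite{OkaPolar}.'' That cited proposition is exactly the criterion you rederive, namely that for a mixed function $F=g+ih$ the pair $\bar\partial_{j}g,\,\bar\partial_{j}h$ is $\mathbb R$-independent at a point if and only if $|\partial_{z_j}F|\ne|\partial_{\bar z_j}F|$ there; your Wirtinger computation with $A,B$ is the standard proof of that fact, so the content is identical, only made self-contained.

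One small remark on the ``delicate point'' you flag: invariance of $\rdeg_Q$ under conjugation gives $(\overline{F(j_0)})_Q=\overline{F(j_0)_Q}$, but to conclude $g(j_0)_Q=\Re\bigl(F(j_0)_Q\bigr)$ one also needs $\Re\bigl(F(j_0)_Q\bigr)\not\equiv 0$ (otherwise the face of $g(j_0)$ would sit in higher degree). This is automatic under the hypothesis: if $\Re\bigl(F(j_0)_Q\bigr)\equiv 0$ then $B+\bar A=0$, forcing $|A|=|B|$, contrary to (\ref{Lojasiewicz-non-degeneracy2}); similarly for the imaginary part. So your identification is justified, but it is worth stating this non-vanishing explicitly rather than calling it ``automatic''.
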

The assertion follows from Proposition 1 \cite{OkaPolar}.
Assume that such a $Q$ is associated with an analytic family
$\bfz(t)$ and $d=\rdeg_Q F(j)$. Then
$d\le\ell_{i_0,j_0}\, q_{i_0}$ and 
\[\begin{split}
&\frac{\partial g}{\partial \bar z_j}(\bfz(t))=\frac{\partial g(j)_Q}{\partial \bar z_j}(\bfa)t^d+\text{(higher terms)},\,
{\ord}\,\frac{\partial g}{\partial \bar z_j}(\bfz(t))=d\\
&\frac{\partial h}{\partial \bar z_j}(\bfz(t))=\frac{\partial h(j)_Q}{\partial \bar z_j}(\bfa)t^d+\text{(higher terms)},\,
{\ord}\,\frac{\partial h}{\partial \bar z_j}(\bfz(t))=d.
\end{split}
\]
\begin{Proposition}\label{vanishing}
 Using Proposition \ref{mix-independence}, there exists a good modified 
gradient pair $\{\bar\partial g(\bfz(t)),\bar\partial h(\bfz(t))_{c(t)}\}$
or $\{\bar\partial g(\bfz(t))_{c(t)},\bar\partial h(\bfz(t))\}$.
Their order in $t$ has a upper bound  $d=\rdeg_{Q} F(j_0)\le \ell_{i_0,j_0}q_{i_0}$. 
\end{Proposition}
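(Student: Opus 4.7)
The strategy is to read the \L ojasiewicz non-degeneracy condition off the $j_0$-components of $\bar\partial g(\bfz(t))$ and $\bar\partial h(\bfz(t))$, invoke the algorithm from the proof of Lemma~\ref{basic-lemma} to build the modifier $c(t)$, and then exploit the $\mathbb R$-linear independence from Proposition~\ref{mix-independence} to prevent the order from rising above $d$ after modification. The bound $d\le \ell_{i_0,j_0} q_{i_0}$ will then fall out of the very existence of the almost $i_0$-axis monomial defining $\ell_{i_0,j_0}$.

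First, I would expand the $j_0$-components. Since $z_{j_0}(t)\equiv 0$ along the curve, one has $\partial f/\partial z_{j_0}(\bfz(t))=(f_{j_0})^I(\bfz_I(t))$ and $\partial f/\partial \bar z_{j_0}(\bfz(t))=(f_{\bar j_0})^I(\bfz_I(t))$; the formulas $\bar\partial g=\tfrac{1}{2}(\bar\partial f+\overline{\partial f})$ and $\bar\partial h=\tfrac{1}{2i}(\bar\partial f-\overline{\partial f})$ then identify the order-$d$ leading coefficients of $\partial g/\partial\bar z_{j_0}(\bfz(t))$ and $\partial h/\partial\bar z_{j_0}(\bfz(t))$, up to elementary constants, with the quantities $\partial g(j_0)_Q/\partial\bar z_{j_0}(\bfa)$ and $\partial h(j_0)_Q/\partial\bar z_{j_0}(\bfa)$ that appear in Proposition~\ref{mix-independence}. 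By that proposition they are linearly independent over $\mathbb R$, hence both non-zero, which already yields $\ord\,\bar\partial g(\bfz(t))\le d$ and $\ord\,\bar\partial h(\bfz(t))\le d$.

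Assuming without loss of generality $\ord\,\bar\partial g(\bfz(t))\le \ord\,\bar\partial h(\bfz(t))$, I would then run the iterative construction from the proof of Lemma~\ref{basic-lemma}: subtract real-analytic multiples of $\bar\partial g(\bfz(t))$ from $\bar\partial h(\bfz(t))$ to remove any residual $\mathbb R$-dependence of the normalized limits, producing the modifier $c(t)$ and the good pair $\{\bar\partial g(\bfz(t)),\bar\partial h(\bfz(t))_{c(t)}\}$. The main obstacle is the order bound for the modified vector: writing $c(t)=c_0+c_1t+\cdots$ with $c_k\in\mathbb R$ and examining the $j_0$-component $\partial h/\partial\bar z_{j_0}(\bfz(t))+c(t)\,\partial g/\partial\bar z_{j_0}(\bfz(t))$, if $c_0=0$ the correction has order strictly greater than $d$ and the order-$d$ leading term of $\partial h/\partial\bar z_{j_0}$ survives, whereas if $c_0\neq 0$ the order-$d$ coefficient is the real linear combination $\partial h(j_0)_Q/\partial\bar z_{j_0}(\bfa)+c_0\,\partial g(j_0)_Q/\partial\bar z_{j_0}(\bfa)$, which is non-zero by the $\mathbb R$-independence of Proposition~\ref{mix-independence}. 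Either way the $j_0$-component, and hence the full modified vector, has order at most $d$.

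Finally, the bound $d\le \ell_{i_0,j_0}q_{i_0}$ is immediate: the almost $i_0$-axis monomial $z_{i_0}^{n_{i_0,j_0}}\bar z_{i_0}^{m_{i_0,j_0}}w_{j_0}$ of $f$, with $n_{i_0,j_0}+m_{i_0,j_0}=\ell_{i_0,j_0}$, differentiates to a monomial $z_{i_0}^{n_{i_0,j_0}}\bar z_{i_0}^{m_{i_0,j_0}}$ in $(f_{j_0})^I$ or in $(f_{\bar j_0})^I$, whose contribution to $F(j_0)$ has radial weight $\ell_{i_0,j_0}q_{i_0}$ (using $q_{j_0}=0$), so the minimum radial weight $d=\rdeg_Q F(j_0)$ is at most this value.
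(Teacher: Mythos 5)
Your proposal is correct and follows exactly the route the paper intends: identify the order-$d$ leading coefficients of the $j_0$-components of $\bar\partial g(\bfz(t))$ and $\bar\partial h(\bfz(t))$ with the quantities in Proposition \ref{mix-independence}, run the modification algorithm of Lemma \ref{basic-lemma}, and use the $\mathbb R$-independence to see that the $j_0$-component retains order exactly $d$ after modification, with $d\le \ell_{i_0,j_0}q_{i_0}$ coming from the almost $i_0$-axis monomial. The paper states this Proposition without a written proof, and your case analysis on the constant term $c_0$ of the modifier supplies precisely the detail that makes the order bound for the modified vector airtight.
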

 
 \subsection{Jacobian dual Newton diagram} We consider the derivatives $f_i(\bfz):=\frac{\partial f}{\partial z_i}(\mathbf z)$
 and $f_{\bar i}(\bfz):=\frac{\partial f}{\partial \bar z_i}(\mathbf z)$, $i=1,\dots, n$.
 Put $F_i(\bfz,\bar \bfz)=f_i(\bfz,\bar\bfz)f_{\bar i}(\bfz,\bar\bfz)$. If one of the derivatives vanishes identically, we consider only non-zero derivatives.
 For example, if $f_{\bar i}\equiv 0$,  we put $F_i=f_i$.
We consider their Newton boundary $\Ga(F_i),\,i=1,\dots, n$. 
 Two weight vectors $P,Q$ are {\em Jacobian equivalent} if
$\Delta(P,F_i)=\Delta(Q,F_i)$ 
for any $i=1,\dots,n$ and $\Delta(P,f)=\Delta(Q,f)$. 
We denote it by $P\underset J{\sim} Q$. This gives a polyhedral cone subdivision of $N_+$ and we denote this as 
$\Gamma_{J}^*(f)$ and we call it {\em the Jacobian dual Newton diagram of $f$}. $\Gamma_J^*(f)$ is a polyhedral cone subdivision of $N_+$
which is finer than $\Gamma^*(f)$.
 \newline\indent
Alternatively we can consider the function 
$F(\mathbf z)=f(\mathbf z)F_1(\mathbf z)\cdots F_n(\mathbf z)$. Then $\Gamma_J^*(f)$ is nothing but the dual Newton diagram $\Gamma^*(F)$
of $F$. For any weight vector $P$, we have $\Delta(P,F)=\Delta(P,f)+\Delta(P,F_1)+\dots+\Delta(P,F_n)$ where the sum is Minkowski sum.
For a weight vector $P$, the set of equivalent weight vectors in $\Gamma^*(f)$ and $\Gamma_J^*(f)$ is denoted as 
$[P]$ and $[P]_J$ respectively.
We consider the vertices of these dual Newton diagrams.
We denote the set of strictly positive vertices of $\Gamma^*(f)$ and $\Ga_J^*(f)$ by $\mathcal V^+,\,\mathcal V_J^+$ as before.
Now we can generalize Theorem 14.
Let $\mathcal V_J^{++}\subset \mathcal V_J^+$  be the set of the vertices of $\Gamma_J^*(f)$ which are in a vanishing boundary region   of $\Gamma^*(f)$
as in the holomorphic case.
The numbers of $\mathcal V^+,\,\mathcal V_J^+,\,\mathcal V_J^{++}$ are finite. We define basic invariants, as before
\[\begin{split}
&\eta_{J,max}(f):=\max\{ \eta(P)\,|\, P\in \mathcal V^+\cup\mathcal V_J^{++}\},\\
&\eta_{max}:=\max\{\eta(P)\,|\, P\in \mathcal V^+\}\\
&\eta_{J,max}':=\max\{\eta_{k,i}'(R)\,|\, R\in  \mathcal V_J^{++},\,k,i=1,\dots,n\}\\
&\eta''_{J,max}(f):=\max\{\eta_{J,max}(f),\eta'_{J,max}(f)\},\,\text{where}\\
&\eta'_{k,i}(R):=\frac{\min\{d(R,f_i),d(R,f_{\bar i})\}}{m(R)}.
\end{split}\]

\begin{Theorem}\label{main-theorem2}
Let $f(\bfz)$ be a  non-degenerate, \L ojasiewicz non-degenerate mixed function  with an isolated singularity at the origin. 
Then \L ojasiewicz exponent $\ell_0(f)$ satisfies the estimation $\ell_0(f)\le \eta_{J,max}''(f)$.
\end{Theorem}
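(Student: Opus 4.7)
The plan is to reproduce the case analysis of Theorem \ref{main-theorem} (\S \ref{ProofOfMainTheorem}), but with every estimate of the form ``${\ord}\,\partial f(\mathbf z(t))\le d(P,f^I)-p_j$'' replaced by the corresponding bound for a \emph{good modified gradient pair} produced by Lemma \ref{basic-lemma}. First, one invokes (the mixed analogue of) Proposition \ref{CurveSelection}: it is enough to check, for every analytic curve $C(t)$ as in (\ref{test-curve3}) with associated data $(P,\mathbf a,I)$, that
\[
\frac{{\ord}\,\bar\partial g(\mathbf z(t))}{{\ord}\,\mathbf z(t)},\ \frac{{\ord}\,(\bar\partial h(\mathbf z(t)))_{c(t)}}{{\ord}\,\mathbf z(t)}\ \le\ \eta''_{J,max}(f)
\]
for some analytic $c(t)$ (and symmetrically with the roles of $g,h$ swapped when convenient). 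The strong non-degeneracy of $f$ combined with Lemma \ref{basic-lemma} (i),(ii),(iv) gives the basic bound ${\ord}\,\bar\partial g(\mathbf z(t)),\ {\ord}\,(\bar\partial h(\mathbf z(t)))_{c(t)}\le d(P,f^I)-m(P)$ whenever $f^I\not\equiv 0$, which is exactly the holomorphic starting inequality with $\partial f$ replaced by the pair $(\bar\partial g,(\bar\partial h)_{c(t)})$.

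Next, one transports the geometric argument of \S \ref{ProofOfMainTheorem} verbatim. The key point is that Lemma \ref{monotone1} and Lemma \ref{monotone2} depend only on the linear-fractional structure of $\eta_{ij}$, $\eta$, and $\eta'_{ij}$ as functions of a segment parameter and are insensitive to whether $f$ is holomorphic or mixed. Accordingly, Cases C-1, C-2, C-3-1, C-3-2 are verbatim: given $P$ in an inner or regular boundary region of $\Gamma^*(f)$ (respectively $\Gamma_J^*(f)$), one takes a line segment $\overline{RS}$ (respectively $\overline{RQ}$ with $R\in\mathcal V^+$ or $\mathcal V_J^+$ obtained from Lemma \ref{existence-maximal}) passing through $P$ and uses Lemma \ref{monotone2} to bound $\eta(\hat P)$ by $\eta$ at a strictly positive vertex, hence by $\eta_{max}(f)$ or $\eta_{J,max}(f)$. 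Induction on $\dim[P]$ (respectively $\dim[P]_J$) handles the case where the endpoint is again strictly positive but not a vertex.

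Case C-3-3, where both $[P]$ and $[P]_J$ are vanishing boundary regions, is the substantive step. Using Proposition \ref{nice-simplex} and Lemma \ref{nice-segment} for $\Gamma_J^*(f)$, one writes $P$ on a segment $\overline{RQ}$ with $R$ $J$-simplicially positive and $Q$ a vanishing weight with $I(Q)=I$. Then the \L ojasiewicz non-degeneracy for mixed functions comes in through Proposition \ref{mix-independence} and Proposition \ref{vanishing}: there exist $i_0\in I':=\{i\in I\mid \hat r_i=m(\hat R_I)\}$ and $j_0\in J_{i_0}$ such that the vectors in (\ref{good-modification}) are $\mathbb R$-linearly independent at $\mathbf a$, which yields a good modified gradient pair whose $t$-order is at most $d(\hat R_s,F_{j_0})\le (1-s)\hat r_{i_0}\cdot\ell_{i_0,j_0}$ for $s$ near $1$. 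From here the computation (a)--(b-2) of \S \ref{ProofOfMainTheorem} copies over and terminates in $\ell_0(C(t))\le\max\{\ell_{i_0,j_0},\eta_{J,max}(f),\eta'_{J,max}(f)\}$. The mixed version of Lemma \ref{estimation-vanishing} (any almost $i$-axis monomial $z_i^{n_{ij}}\bar z_i^{m_{ij}}w_j$ on a maximal face $\Xi=\Delta(P)$ with $P\in\mathcal V^+$ satisfies $\ell_{ij}p_i+p_j=d(P)$ and hence $\ell_{ij}\le\eta(P)\le\eta_{max}(f)$) eliminates the $\ell_{i_0,j_0}$ term, giving $\ell_0(C(t))\le\eta''_{J,max}(f)$.

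Finally, test curves in a proper coordinate subspace are handled exactly as in \S \ref{proper-subspace}: when $f^I\not\equiv 0$ one perturbs $\mathbf z(t)$ by $(t^N,\dots,t^N)$ for $N$ large so that the orders of both modified gradients are unchanged and the already-proved strictly positive case applies; when $f^I\equiv 0$, the \L ojasiewicz non-degeneracy directly produces the bound via almost $i$-axis monomials and Proposition \ref{vanishing}. The main obstacle is conceptual rather than combinatorial: one must argue that the construction of the good modified gradient pair from Lemma \ref{basic-lemma} is compatible with the segment-deformation arguments, i.e., that choosing $c(t)$ for the curve $\hat R_s$ does not disrupt the inequality ${\ord}\,(\bar\partial h(\mathbf z(t)))_{c(t)}\le d(\hat R_s,F_{j_0})$; this is precisely what part (iv) of Lemma \ref{basic-lemma} guarantees. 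Once that compatibility is in hand, the geometric bookkeeping of \S \ref{ProofOfMainTheorem} applies mutatis mutandis and delivers $\ell_0(f)\le\eta''_{J,max}(f)$.
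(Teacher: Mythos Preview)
Your proposal is correct and follows essentially the same route as the paper: the paper's own proof of Theorem \ref{main-theorem2} is a terse sketch that declares the argument ``completely parallel'' to Theorem \ref{main-theorem}, replaces $\partial f$ by a good modified gradient pair via Lemma \ref{basic-lemma}, and in Case C-3-3 invokes the mixed \L ojasiewicz non-degeneracy through Propositions \ref{mix-independence}--\ref{vanishing} to bound the orders by $\ell_{i_0,j_0}q_{i_0}$ before copying the (a)--(b-2) computation. Your write-up is in fact more explicit than the paper about why the choice of $c(t)$ is compatible with the segment argument (your appeal to part (iv) of Lemma \ref{basic-lemma}), a point the paper leaves implicit.
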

\begin{proof}
The proof is completely parallel to  that of Theorem \ref{main-theorem}.
We  consider an anlytic curve $C(t)$ parametrized as $\bfz(t)=(z_1(t),\dots, z_n(t))$ and 
put $I:=\{i\,|\, z_i(t)\not \equiv 0\}$. 
Consider the Taylor expansion of $z_i(t)$ as before:
\begin{eqnarray}
\begin{cases}
&\bfz(t)=(z_1(t),\dots, z_n(t)),\,\,\bfz(0)=0,\, \bfz(t)\in \mathbb C^{*I}\\
&z_i(t)=a_it^{p_i}+\text{(higher terms)},\quad i\in I\end{cases}
\end{eqnarray}
We put $P=(p_i)\in N_+^{*I}$ as before.
Assume first $I=\{1,\dots, n\}$. 
We divide the situation into  three cases as before.
\begin{enumerate}
\item[C-1] ${[P]}$ is an inner region. That is, $\overline{[P]}$  has only strictly positive weight vectors in the boundary.
\item[C-2] ${[P]}$ is  a regular boundary region.
\item[C-3] 
$[P]$ is a vanishing boundary region. In this case, we need to consider
the subdivision by $[P]_J$. There are three subcases.
\newline
C-3-1. ${[P]_J}$ is  an inner region.
\newline C-3-2.
${[P]_J}$ is  a regular boundary region.
\newline
C-3-3. ${[P]_J}$ is also a vanishing boundary region.
\end{enumerate}
Then the proof goes exactly as that of Theorem \ref{main-theorem}, using  Lemma \ref{basic-lemma}.
For the cases C-1, C-3-1, C- 2,C-3-2,  we start from  a given good modified gradient pair 
and the estimation  of these gardient by  Lemma \ref{basic-lemma}.
Then  the argument is completely the same.
 We have the estimation $\ell_0(C(t))\le \eta_{J,max}''(f)$ in these cases.

For the case C-3-3,
consider the situation   that $P$  is not  a simplicially positive and 
 $R,Q$ as in Lemma \ref{nice-segment} so that $P$ is on the line segment $\overline{RQ}$,
 $R$ is simplicially positive and $Q$ is not strictly positive.
If $Q$ is non-vanishing, it reduced to Case 3-2.
Thus we assume that $Q$ is a vanishing weight vector and 
$d(Q,f)>0$. Assume that $Q=(q_1,\dots, q_n)$ and  $I=\{i\,|\, q_i=0\}$ and assume $I=\{1,\dots, m\}$ for simplicity.
Note that $\mathbb C^I$ is a vanishing coordinate subspace.
For each $i\in I$, there exists some $j\not\in I$ and a monomial $z_i^{n_{i,j}}z_j$ with a non-zero coefficient, as $f$ has an isolated singularity at the origin.
Put $J_i$ be the set of such $j$ for a fixed $i\in I$ and put $J(I)=\cup_{i\in I}J_i$.
Here $n_{i,j}$ is assumed to be the smallest when $j$ is fixed.
Put $\xi_{I}:=\max\{n_{i,j}\,|\, i\in I,j\in J_i\}$ and $\xi(f)$ be the maximum of $\xi_{I}$ where $I$ moves in the coordinate subspaces corresponding to  vanishing coordinate subspaces.
Put $\eta'_{J,max}(f):=\max\{\eta'_{k,i}(R)\,|\,R\in  \mathcal V^{++}\}$
where $\eta_{j,i}'(R)={d(R,f_i)}/{r_j}$. Under the above situation, 
we will prove, as in the holomorphic case,  that  
\begin{eqnarray*}
(\star)\quad \ell_0(C(t))\le\max\{\xi(f),\eta_{J,max}(f),\eta'_{J,max}\}.\
\end{eqnarray*}
Consider the normalized weight vector
$\hat R_s:=(1-s)\hat R+s\hat Q,\,0\le s\le 1$. 
Note that $\hat R_0=\hat R,\,\hat R_1=\hat Q$ and putting 
$\hat R_s=(\hat r_{s,1},\dots,\hat r_{s,n})$,
\[\begin{split}
&\hat r_{s,i}=
\begin{cases} &(1-s)\hat r_i,\quad 1\le i\le m\\
&(1-s)\hat r_{i}+s\hat q_{i},\quad m<i\le n.
\end{cases}\\
\end{split}
\]
 Put  $I'=\{i\in I\,|\, \hat r_i=m(\hat R_I)\}$
 and $J'=\cup_{i\in I'}J_i$.  
 Thus for $i\le m$, the normalized weight $\hat r_{sj}$ goes to $0$, when  $s$  approaches to $ 1$. On the other hand,  for  $j>m$,  $\hat r_{sj}\ge \delta, 0\le \forall s\le 1$ for some $\delta>0$.
 Thus there exists an $\eps,\,1>\eps>0$ so that for
 $1-\eps\le s\le 1$, $m(\hat R_s)$ is taken by $i\in I'$.
 Note that for $j\in  J'$,  there exists a  small enough $\eps_2,\,\eps_2\le \eps_1$ so that  $(f_j)_{\hat R_s}=((f_j)^I)_{\hat R_{s}}$ as 
 $\hat r_{sj}\ge \de $ for $j>m$ for 
  $1-\eps_2\le s\le 1$.  Here $(f_j)^I$ is the restriction of $f_j$ to $\mathbb C^I$ and 
 $(\hat R_{s})_I$ is the $I$ projection of $\hat R_{s}$ to $N_+^I$.
  That is, $(f_j)_{\hat R_s}$  contains only variable $z_1,\dots, z_m$.
By the \L ojasiewicz non-degeneracy, there exists $i_0\in I'$ and $j_0\in J_{i_0}$
 such that 
\begin{eqnarray}\label{good-modification}
\left \{\frac{\partial g(j_0)_{Q}}{\partial\bar z_{j_0}}(\bfa),\frac{\partial h(j_0)_{Q}}{\partial\bar z_{j_0}}(\bfa)
\right\}
\end{eqnarray}
are linearly independent over $\mathbb R$.
 Here we use  the same notation as in (\ref{good-modification}).
 By the definition of Jacobian dual Newton diagram and Poroposition \ref{good-modification},  there is a good modified gradient pair, say 
 $\bar\partial g(\bfz(t)),\bar \partial h (\bfz(t))_{c(t)}$ (we assume
 $\ord\,\bar\partial g(\bfz(t))\le\ord\,\bar\partial h(\bfz(t))$ for simplicity)
 so that {\em  their orders are estimated from above by
 $\ell_{i_0,j_0}q_{i_0}$. }
 
 The rest of the argument is simply the evaluation of the number
 $\ell_{i_0,j_0}q_{i_0}/m(\hat R_{s_0})$ and 
the proof is completed  by the exact same argument  as in the proof of Theorem \ref{main-theorem}, Case 3-3-3  using the following.
The case $C(t)\in \mathbb C^{*I}$ with $I^c\ne \emptyset$ is treated also by the exactly same argument. \end{proof}
\begin{Lemma} (Restatement of Lemma \ref{estimation-vanishing} )
We have the inequality:
$\ell(I)\le \eta_{max}(f)$.
\end{Lemma}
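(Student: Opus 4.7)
The plan is to mimic the proof of Lemma \ref{estimation-vanishing} in the holomorphic case, replacing the holomorphic monomial $z_i^{n_{i,j}}z_j$ by the mixed monomial $z_i^{n_{ij}}\bar z_i^{m_{ij}}w_j$ and using the fact that the Newton boundary for mixed functions is built from the radial degree of monomials. Recall that by definition $\ell_{ij}=n_{ij}+m_{ij}$ is minimal over the almost $i$-axis monomials, and the point $B_{ij}=\ell_{ij}e_i+e_j$ lies on $\Gamma(f)$.

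Fix $i\in I$ and $j\in J_i$. First I would pick any maximal, i.e.\ $(n-1)$-dimensional, face $\Xi\subset \Gamma(f)$ whose face function $f_\Xi$ contains one of the minimal almost $i$-axis monomials $z_i^{n_{ij}}\bar z_i^{m_{ij}}w_j$; such a face exists because $B_{ij}\in\Gamma(f)$ and, by the assumption $\dim\Gamma(f)=n-1$ together with Lemma \ref{existence-maximal}, every face of $\Gamma(f)$ is contained in an $(n-1)$-dimensional face. Let $P=(p_1,\dots,p_n)\in\mathcal V^+$ be the strictly positive vertex of $\Gamma^*(f)$ corresponding to $\Xi$, so the supporting hyperplane $\Pi(P)\colon p_1\nu_1+\dots+p_n\nu_n=d(P)$ contains $\Xi$ and in particular contains $B_{ij}$. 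Evaluating this equation at $B_{ij}$ yields
\[
\ell_{ij}\,p_i+p_j=d(P).
\]

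From this identity I read off directly
\[
\ell_{ij}=\frac{d(P)-p_j}{p_i}=\eta_{i,j}(P)\le \eta(P)\le \eta_{\max}(f),
\]
where the first inequality uses $p_i\ge m(P)$ and $d(P)-p_j\le d(P)-m(P)$, and the second is just the definition of $\eta_{\max}(f)$ as the supremum over strictly positive vertices. Taking maximum over $i\in I$ and $j\in J_i$ gives $\ell(I)\le\eta_{\max}(f)$.

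The only real point of care is the existence of the $(n-1)$-dimensional face $\Xi$ containing $B_{ij}$, but this is exactly what the standing hypothesis $\dim\Gamma(f)=n-1$ guarantees via Lemma \ref{existence-maximal}; once this is in place the computation is purely combinatorial and runs in parallel with the holomorphic case. Note that whether $w_j=z_j$ or $w_j=\bar z_j$ is irrelevant, since only the exponent sum $n_{ij}+m_{ij}=\ell_{ij}$ and the single $j$-th coordinate $1$ of $B_{ij}$ enter the linear relation defining $\Pi(P)$.
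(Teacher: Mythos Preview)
Your proof is correct and follows essentially the same approach as the paper's proof of Lemma~\ref{estimation-vanishing} in the holomorphic case: pick a maximal face of $\Gamma(f)$ through $B_{ij}$, read off $\ell_{ij}p_i+p_j=d(P)$ from the supporting hyperplane, and bound $\ell_{ij}=\eta_{i,j}(P)\le\eta(P)\le\eta_{\max}(f)$. The paper in fact gives no separate proof in the mixed setting, simply calling this lemma a ``restatement'' of Lemma~\ref{estimation-vanishing}, so your adaptation (noting that only the radial exponent sum $\ell_{ij}=n_{ij}+m_{ij}$ matters for the Newton-boundary computation) is exactly what is intended.
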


 Theorem  \ref{estimation-weighted} for non-degenerate,  \L ojasiewicz non-degenerate  (radially)  weighted homogeneous polynomial and
\L ojasiewicz Join Theorem \ref{Lojasiewicz Join Theorem} also hold in the exactly same way for mixed functions. For example,  we can state
\begin{Theorem}\label{mixed weighted}
Let $f(\bfz)$ be a strongly non-degenerate, \L ojasiewicz non-degenerate mixed weighted homogeneous polynomial with isolated singularity at the origin
and $\dim\,\Ga(f)=n-1$. Let $R$ be the weight vector of $f$. Then we have the estimation
$\ell_0(f)\le \eta(R)$.
\end{Theorem}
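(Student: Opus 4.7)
The plan is to follow the strategy of Theorem \ref{estimation-weighted} (the holomorphic weighted-homogeneous case) verbatim, replacing the single holomorphic gradient $\partial f$ by a good modified gradient pair $\{\bar\partial g(\bfz(t)),\,(\bar\partial h(\bfz(t)))_{c(t)}\}$ (or the swap when $\ord\bar\partial g>\ord\bar\partial h$), and using Lemma \ref{basic-lemma} in place of the holomorphic non-degeneracy estimate \eqref{estimation1}. Since $f$ is radially weighted homogeneous with weight vector $R$ and $\dim\Gamma(f)=n-1$, the set of strictly positive vertices is $\mathcal V^+=\{R\}$, and $\Gamma(f)$ has a single top-dimensional face. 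Moreover, each $f_j$ and $f_{\bar j}$ is itself radially weighted homogeneous under $R$, so $\Delta(R,F_j)=\Gamma(F_j)$ and consequently $R$ is $J$-admissible with any other weight that meets the single face of $\Gamma(f)$. By the mixed curve selection lemma it suffices to bound $\ell_0(C(t))\le\eta(R)$ along every analytic test curve $\bfz(t)$ parametrized as in \eqref{test-curve3}, with associated data $P\in N_+^{*I}$ and $\bfa\in\mathbb C^{*I}$.

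First I take $I=\{1,\dots,n\}$, so $P$ is strictly positive. If $P\in[R]$, then $f_P=f_R$ and Lemma \ref{basic-lemma}(iv) bounds the orders of any good modified gradient pair along $\bfz(t)$ by $d(R,f)-m(R)$; dividing by $\ord\bfz(t)=m(P)$ gives $\ell_0(C(t))\le\eta(R)$. Otherwise, I apply Lemma \ref{nice-segment} in $\Gamma_J^*(f)$ to produce a segment $\overline{SQ}$ through $P$ with $S$ simplicially positive and $Q$ not strictly positive; because $\mathcal V^+=\{R\}$ we must have $S=R$. Parametrize normalized weights on the segment by $\hat R_s=(1-s)\hat R+s\hat Q$ with $\hat P=\hat R_{s_0}$, and put $I:=I(Q)$. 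When $Q$ is non-vanishing ($f^I\not\equiv 0$), the argument of Case C-3-2 of Theorem \ref{main-theorem} transfers intact: Lemma \ref{basic-lemma}(iv) replaces the holomorphic gradient bound, and the monotonicity Lemma \ref{monotone2} together with the fact that $R$ is the only strictly positive vertex yields $\ell_0(C(t))\le\eta(R)$.

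The principal new case is the vanishing one, so that $\mathbb C^I$ is a vanishing coordinate subspace (say $I=\{1,\dots,m\}$). For $i\in I$ choose almost $i$-axis monomials $z_i^{n_{ij}}\bar z_i^{m_{ij}}w_j$, $j\in J_i$, with $n_{ij}+m_{ij}=\ell_{ij}$, and set $I'=\{i\in I\mid\hat r_i=m(\hat R_I)\}$, $I_R=\{k\mid\hat r_k=m(\hat R)\}$. By the mixed \L ojasiewicz non-degeneracy together with Proposition \ref{mix-independence}, for $s$ sufficiently close to $1$ there exist $i_0\in I'$ and $j_0\in J_{i_0}$ such that $\{\partial g(j_0)_{\hat R_s}/\partial\bar z_{j_0}(\bfa),\,\partial h(j_0)_{\hat R_s}/\partial\bar z_{j_0}(\bfa)\}$ is $\mathbb R$-linearly independent; Proposition \ref{vanishing} then furnishes a good modified gradient pair whose orders along $\bfz(t)$ are bounded by $\rdeg_{\hat R_s}F(j_0)\le\ell_{i_0,j_0}(1-s)\hat r_{i_0}$. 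Splitting into subcases (a) $I'\cap I_R\ne\emptyset$, (b-1) disjoint with $(1-s)\hat r_k+s\hat q_k$ monotone increasing, and (b-2) the monotone decreasing subcase—exactly as in Theorem \ref{estimation-weighted}—each yields $\ell_0(C(t))\le\max\{\ell_{i_0,j_0},\eta(R)\}$. The mixed restatement of Lemma \ref{estimation-vanishing} closes the argument: the monomial $z_{i_0}^{n_{i_0,j_0}}\bar z_{i_0}^{m_{i_0,j_0}}w_{j_0}$ lies on the unique face $\Gamma(f)$, so radial weighted homogeneity gives $\ell_{i_0,j_0}r_{i_0}+r_{j_0}=d(R,f)$, whence $\ell_{i_0,j_0}=\eta_{i_0,j_0}(R)\le\eta(R)$.

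For the test curve in a proper coordinate subspace ($I\subsetneq\{1,\dots,n\}$), the reduction in Section \ref{proper-subspace} applies: if $f^I\not\equiv 0$, extending $\bfz(t)$ by $t^N$ in the missing coordinates for sufficiently large $N$ does not alter the orders of the relevant derivatives and reduces to the full-dimensional case; if $f^I\equiv 0$, the subspace is vanishing and the \L ojasiewicz non-degeneracy applied to the $I$-truncated data directly gives $\ell_0(C(t))\le\max_{i\in I',j\in J_i}\ell_{ij}\le\eta(R)$. The main technical hurdle I expect is the same as in the proof of Theorem \ref{main-theorem2}: namely, guaranteeing that a single modification $c(t)$ (hence a single good modified gradient pair) can be chosen so that Lemma \ref{basic-lemma}(iv) controls the gradient orders at the intermediate weight $\hat P=\hat R_{s_0}$, not just at the endpoints. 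This in turn reduces to the $J$-admissibility of $R$ and $Q$, which here is automatic because $\Delta(\hat R,F_j)=\Gamma(F_j)\supset\Delta(\hat R_s,F_j)$ for every $s\in[0,1]$ thanks to the radial homogeneity of each $F_j$ under $R$.
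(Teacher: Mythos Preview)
Your proposal is correct and follows essentially the same approach as the paper: the paper does not give a separate proof of Theorem~\ref{mixed weighted} but simply states that the argument of Theorem~\ref{estimation-weighted} carries over to the mixed setting ``in the exactly same way,'' and your write-up is precisely that transcription, with Lemma~\ref{basic-lemma} and Propositions~\ref{mix-independence}--\ref{vanishing} replacing the holomorphic gradient estimate and the key observation $\Gamma^*(f)=\Gamma_J^*(f)$ justified via radial homogeneity of each $F_j$ under $R$.
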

\subsection{Making $f$ convenient}
We also generalize Theorem 21. Take an integer $N>\eta''_{J,max}(f)+1$.
Consider mixed polynomial $R(\bfz,\bar \bfz):=\sum_{i=1}^n c_i z_i^{m_i}{\bar z_i}^{n_i}$ where $n_i,m_i$ are any fixed non-negative integers with 
$m_i+n_i=N_i\ge N$ and $n_i\ne m_i$.
We choose such $\{(m_i,n_i)\,|\, i=1,\dots,n\}$ and fix them. The coefficients
$c_1,\dots, c_n$ are generic so that 
$f_1:=f(\bfz,\bar\bfz)+R(\bfz,\bar \bfz)$ is strongly non-degenerate.
Consider the family $f_s(\bfz,\bar\bfz)=f(\bfz,\bar\bfz)+sR(\bfz,\bar \bfz)$.
Then we have the following.
\begin{Theorem}
There exists a $r_0>0$ such that for any $r\le r_0$, the sphere $S_r$ and the family of  hypersurface $V_s:=f_s\inv(0)$ intersect transversely for any 
$0\le s\le 1$. In particular, the links of $f$ and $f_1$ are isotopic and their Milnor fibrations are isomorphic.
\end{Theorem}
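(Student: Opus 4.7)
The plan is to mimic the proof of Theorem \ref{making-convenience} in the mixed setting, with all gradient arguments replaced by their \emph{good modified gradient pair} versions from Lemma \ref{basic-lemma}. I would argue by contradiction: assume some sphere $S_r$ fails to meet some $V_{s_0}$ transversely arbitrarily close to the origin. By the Curve Selection Lemma applied to the real semianalytic set where $S_r \cap V_s$ is non-transverse, there exist analytic curves $\bfz(t), s(t)$ and a real Laurent multiplier $\la(t)$ (coming from the two real equations $\Re(\bfz,\overline{\bar\partial g_s}) = 0$ and $\Re(\bfz,\overline{\bar\partial h_s}) = 0$ that fail at the bad point) with $\bfz(0) = \mathbf 0$, $s(0)=s_0 \in [0,1]$, and $f_{s(t)}(\bfz(t), \overline{\bfz(t)}) \equiv 0$, while $\bfz(t)$ stays $\mathbb R$-parallel to the space spanned by $\bar\partial g_{s(t)}(\bfz(t))$ and $\bar\partial h_{s(t)}(\bfz(t))$.

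Next I would expand as in the holomorphic case: $z_i(t) = a_i t^{p_i} + \cdots$ on $I = \{i \mid z_i(t) \not\equiv 0\}$ with weight $P = (p_i)_{i \in I}$ and $\bfa = (a_i) \in \mathbb C^{*I}$. Since each monomial of $R$ has total degree $N_i \geq N > \eta''_{J,\max}(f)+1$, we get
\[
{\ord}\,\bar\partial R(\bfz(t)) \geq m(P)(N-1) > m(P)\,\eta''_{J,\max}(f).
\]
By Theorem \ref{main-theorem2} applied to $f$, there is a good modified gradient pair (say $\bar\partial g(\bfz(t)),\ \bar\partial h(\bfz(t))_{c(t)}$) for $f$ with both orders bounded by $m(P)\,\eta''_{J,\max}(f)$. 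The key point is that the corresponding modified pair for $f_{s(t)} = f + s(t)R$ is obtained by adding $s(t)\bar\partial \Re R(\bfz(t))$ and $s(t)(\bar\partial \Im R(\bfz(t)))_{c(t)}$ respectively; these perturbations have strictly larger order, so the normalized limits satisfy
\[
\lim_{t\to 0}(\bar\partial g_{s(t)}(\bfz(t)))_{norm} = \lim_{t\to 0}(\bar\partial g(\bfz(t)))_{norm},
\]
and analogously for the $h$-part, and the pair remains good for $f_{s_0}$ at the limit.

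Then Lemma \ref{basic-lemma}(iii), applied to $f_{s_0}$ using $f_{s(t)}(\bfz(t),\overline{\bfz(t)}) \equiv 0$ (which forces $(f_{s_0})_P(\bfa) = 0$ at the leading order), yields that $\lim_{t\to 0}\bfz(t)_{norm}$ is \emph{real orthogonal} to both limit gradients. On the other hand, the non-transversality assumption gives $|\Re(\bfz(t), \overline{\bar\partial g_{s(t)}})|/(\|\bfz(t)\|\,\|\bar\partial g_{s(t)}\|) \to 1$ (and similarly for $h$), producing the contradiction $0 = 1$. This gives a uniform stable radius $r_0$; isotopy of the links and isomorphism of Milnor fibrations for $0\le s\le 1$ then follow from Ehresmann's fibration theorem applied to the tubular Milnor fibration, exactly as in Theorem \ref{making-convenience}.

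The main obstacle is the bookkeeping around the modified gradient pair: I must verify that a single analytic function $c(t)$ chosen for $f$ still produces a good pair for $f_{s_0}$, i.e.\ that the normalized limits remain $\mathbb R$-linearly independent after adding $s_0\bar\partial R(\bfz(t))$. This is where the strict inequality $N > \eta''_{J,\max}(f)+1$ is crucial: it ensures that the $R$-contribution lies in strictly higher order and therefore cannot spoil the leading-order linear independence of the two real gradient vectors established for $f$ alone.
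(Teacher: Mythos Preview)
Your approach is exactly what the paper intends (it says ``The proof is similar and we leave it to the reader''), and the overall architecture---Curve Selection Lemma, comparison of orders between $\bar\partial R$ and the good modified gradient pair of $f$, then Lemma \ref{basic-lemma}(iii)---is correct.

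There is one genuine slip in your contradiction step. In the mixed case non-transversality of $S_r$ and $V_{s(t)}$ at $\bfz(t)$ means only that $\bfz(t)$ lies in the real $2$-plane $\Pi(t):=\mathrm{span}_{\mathbb R}\{\bar\partial g_{s(t)}(\bfz(t)),\,\bar\partial h_{s(t)}(\bfz(t))\}$; it does \emph{not} force $\bfz(t)$ to be parallel to either gradient separately, so your claim that $|\Re(\bfz(t),\bar\partial g_{s(t)})|/(\|\bfz(t)\|\,\|\bar\partial g_{s(t)}\|)\to 1$ is unjustified. The correct way to close the argument is: since $\Pi(t)$ is also spanned by the good modified pair $\{\bar\partial g_{s(t)},(\bar\partial h_{s(t)})_{c(t)}\}$, and this pair has $\mathbb R$-independent normalized limits $v_{g,\infty},v_{h,\infty}'$ (this is precisely what ``good'' means, and your order comparison shows the $R$-perturbation does not affect these limits), the planes $\Pi(t)$ converge in the Grassmannian to $\Pi_\infty:=\mathrm{span}_{\mathbb R}\{v_{g,\infty},v_{h,\infty}'\}$. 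Hence $\bfa_\infty:=\lim_{t\to 0}\bfz(t)_{norm}\in\Pi_\infty$. But Lemma \ref{basic-lemma}(iii) gives $\bfa_\infty\perp v_{g,\infty}$ and $\bfa_\infty\perp v_{h,\infty}'$, i.e.\ $\bfa_\infty\in\Pi_\infty^\perp$, forcing $\bfa_\infty=0$, contradicting $\|\bfa_\infty\|=1$.

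You should also mention the case where $\bfz(t)$ lies in a vanishing coordinate subspace $\mathbb C^I$ (so $f^I\equiv 0$ and Lemma \ref{basic-lemma} does not apply directly); there, as in the holomorphic Case~2 of Lemma \ref{tangent-cone}, one has $\bar\partial g(\bfz(t)),\bar\partial h(\bfz(t))\in\mathbb C^{I^c}$ while $\bfz(t)\in\mathbb C^I$, so orthogonality is automatic.
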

The proof is similar and we leave it to the reader.

\def\cprime{$'$} \def\cprime{$'$} \def\cprime{$'$} \def\cprime{$'$}
  \def\cprime{$'$} \def\cprime{$'$} \def\cprime{$'$} \def\cpri{$'$}

\end{document}